\tikzset{math3d/.style={x={(-0.353cm,-0.353cm)},z={(0cm,1cm)},y={(1cm,0cm)}}}
\newtheoremstyle{remark1}
  {\topsep}   
  {\topsep}   
  {\normalfont}  
  {0pt}       
  {\bfseries} 
  {.}         
  {5pt plus 1pt minus 1pt} 
  {}          
\newtheorem{Theo}{Theorem}[section] 
\newtheorem{Prop}{Proposition}[section]
\newtheorem{Lem}{Lemma}[section]
\newtheorem{Cor}{Corollary}[section]
\newtheorem{Def}{Definition}[section]
\theoremstyle{remark1}
\newtheorem{Rem}{Remark}[section]
\newenvironment{Dem}[1]
{\begin{trivlist}\item[\hskip5mm{\sc #1}]}{\end{trivlist}}
\newcommand{\R}{\mathbb{R}}
\newcommand{\dx}{\ \textrm{d}x}
\newcommand{\harpoon}{\ -\hspace{-0.2cm}\rightharpoonup \  }
\newcommand{\T}{\textrm{T}}
\newcommand{\Div}{\textrm{Div}}
\newcommand{\e}{\varepsilon}
\newcommand{\N}{\mathbb N}
\renewcommand{\qed}{\hfill \ensuremath{\Box}}
\newcommand{\supp}{\operatorname{supp}} 
\renewcommand{\div}{\textrm{div}}
\newcommand{\curl}{\textrm{curl}}
\def\Sum{\displaystyle\sum}
\tikzset { xmin/.store in=\xmin, xmin/.default=-3, xmin=-3,
           xmax/.store in=\xmax, xmax/.default=3, xmax=3,
           ymin/.store in=\ymin, ymin/.default=-3, ymin=-3,
           ymax/.store in=\ymax, ymax/.default=3, ymax=3}
\tikzset {domaine/.style 2 args={domain=#1:#2}}
\newcommand{\eq}{\begin{equation}}
\newcommand{\qe}{\end{equation}}
\newcommand{\eqs}{\begin{equation*}}
\newcommand{\qes}{\end{equation*}}
\renewcommand{\thefigure}{\ifnum \c@section>\z@ \thesection.\fi
 \@arabic\c@figure}
\renewcommand{\theequation}{\ifnum \c@section>\z@ \thesection.\fi
 \@arabic\c@equation}
\title{ Magneto-resistance in three-dimensional composites.}
\author{\begin{tabular}{cc}
    {Marc BRIANE } & {Laurent PATER\footnote{Corresponding author.}}
    \\*[-.0em]
    {\small  Institut de Recherche Math\'ematique de Rennes} & {\small Institut de Recherche Math\'ematique de Rennes}
    \\*[-.3em]
    {\small  INSA de Rennes} &	 {\small Universit\'e de Rennes 1}
    \\*[-.3em]
    {\small mbriane@insa-rennes.fr} & {\small laurent.pater@ens-cachan.org}
\end{tabular}}
\begin{document}
\maketitle
\textcolor{white}{mon texte}
\begin{abstract}
In this paper we study the magneto-resistance, \textit{i.e.} the second-order term of the resistivity perturbed by a low magnetic field, of a three-dimensional composite material. Extending the two-dimensional periodic framework of \cite{Brimagneto}, it is proved through a $H$-convergence approach that the dissipation energy induced by the effective magneto-resistance is greater or equal to the average of the dissipation energy induced by the magneto-resistance in each phase of the composite. This inequality validates for a composite material the Kohler law which is known for a homogeneous conductor. The case of equality is shown to be very sensitive to the magnetic field orientation. We illustrate the result with layered and columnar periodic structures. 
\end{abstract}
{\bf Keywords:} Hall effect, homogenization, magneto-resistance, magneto-transport.
\par\bigskip\noindent
{\bf AMS classification:}
35B27, 74Q15
\maketitle

\section{Introduction}

In a conductor with a matrix-valued resistivity $\rho$, a low magnetic field $h \in \R^3$ induces a perturbed resistivity $\rho(h)$. Due to Onsager relations (see \cite{LaLi,Ali}), the perturbed resistivity satisfies \eq\label{intro0} \rho(h) = \rho(-h)^\T. \qe As a consequence, the perturbed resistivity admits the following second-order expansion (see Section~\ref{sect1}): \eq \rho(h) = \rho(0) + \mathscr R( h) + \mathscr M(h,h) + o(|h|^2), \qe where $\rho(0)$, $\mathscr M(h,h)$ are symmetric matrices and $\mathscr R(h)$ is an antisymmetric matrix. On the one hand, according to the Hall effect (see, \textit{e.g.}, \cite{LaLi}), the magnetic field induces a transversal electric field $E_t(h)$  which balances the magnetic force acting on the charge carrier and is perpendicular to the current $j$. It is given by  \eq E_t(h) = \mathscr R(h) \, j \, \bot \, j, \qe where $\mathscr R(h)$ is the Hall tensor which reduces to $r (j \times h)$ in the isotropic case. On the other hand, the so-called magneto-resistance $\mathscr M(h,h)$ measures the difference between the perturbed dissipation energy and the unperturbed one, namely\eq \rho(h) j \cdot j - \rho(0) j\cdot j = \mathscr M(h,h) j \cdot j + o(|h|^2), \label{introexpmag}\qe
in which the Hall term plays no role (due to the antisymmetry of $\mathscr R(h)$). Expansion \eqref{introexpmag} has to be regarded in connection to the Kohler law \cite{Koh} which states that the symmetrized resistivity (without the Hall term) $\rho_s(h)$ of a homogeneous conductor satisfies the asymptotic \eq \rho_s(h) - \rho(0) \underset{h \to 0}{\approx}  m \, |h|^2 \quad \text{with } m > 0, \label{Kohlaw}\qe which corresponds to an increase of the magneto-resistance.

When the conductor has a microstructure characterized by a scale $\e$, the resistivity $\rho_\e(h)$ depends on the two parameters $\e,h$. In the framework of the Murat Tartar $H$-convergence theory (see Section~\ref{sect1} and \cite{MurComp,tpeccot}), the conductivity $\sigma_\e(h) = \rho_\e(h)^{-1}$ $H$-converges to the effective (or homogenized) conductivity $\sigma_*(h)$. Under appropriate regularity conditions for $\sigma_\e(h)$ (see \eqref{cond-reg}), the effective resistivity $\rho_*(h) = \sigma_*(h)^{-1}$ also satisfies equality \eqref{intro0} and the second-order expansion \eq \rho_*(h) = \rho_*(0) + \mathscr R_*( h) + \mathscr M_*(h,h) + o(|h|^2), \label{expintro1}\qe where $\mathscr R_*$ is the effective Hall tensor and $\mathscr M_*$ is the effective magneto-resistance tensor.

\bigskip

In his seminal work \cite{Ber}, Bergman gave for a periodic composite material an expression of the effective Hall matrix in terms of the local Hall matrix and the local current fields obtained in the absence of a magnetic field. Bergman's approach was extended in dimension two \cite{BMM} and in dimension three \cite{BM1} in the non-periodic framework of $H$-convergence. 

In dimension two, the conductor lies in a plane $(e_1,e_2)$ embedded in a transversal magnetic field $h \, e_3$, so that the local/effective Hall coefficient $r_{\e/*}$ and the local/effective magneto-resistance matrix $M_{\e /*}$ are defined by \eq \mathscr R_{\e/*}(h) = r_{\e / *} \, h \, \left( \begin{smallmatrix} 0 & -1 \\ 1 & 0 \end{smallmatrix} \right)\quad \text{and} \quad \mathscr M_{\e / *}(h,h) = h^2 \, M_{\e / *}. \qe In the periodic case, \textit{i.e.} when $r_\e(x) = r(x / \e)$ and $M_\e(x) = M(x / \e)$ are oscillating functions of the fast variable $x/\e$, it was proved in \cite{Brimagneto} that \eq M_* \, \langle j \rangle \cdot \langle j \rangle - \left \langle M \, j \cdot j\right \rangle \geq 0,\label{introineq2}\qe for any unperturbed current $j$, and that \eqref{introineq2} is an equality if and only if the Hall coefficient is a constant. By the Kohler law \eqref{Kohlaw}, the magneto-resistance in each phase satisfies  $M = \mu \, I_2$ with $\mu >0$, which implies the positivity of $M_*$ by \eqref{introineq2}. Then, the positivity of $m$ in \eqref{Kohlaw} corresponds to the positivity of $M_*$ in \eqref{introineq2}. Therefore, the inequality \eqref{introineq2} extends the classical Kohler law \eqref{Kohlaw} to anisotropic two-dimensional composites (see \cite{Brimagneto}, Remark 2.6).

\bigskip

This paper extends the results of \cite{Brimagneto} to three-dimensional composites. In dimension three, the local/effective Hall tensor reads as \eq \mathscr R_{\e / *} \cdot h = \mathscr E(R_{\e / *} \,h), \quad \text{with } \mathscr E(\eta):= \left(\begin{smallmatrix} 0 & -\eta_1 & \eta_2 \\ \eta_1 & 0 & - \eta_3 \\ -\eta_2 & \eta_3 & 0\end{smallmatrix}\right),\label{defR}\qe where $R_{\e / *}$ is called the local/effective Hall matrix. First, we obtain a general expression (see Theorem \ref{THconv}) for the difference between the effective dissipation energy due to the magneto-resistance and the average of the local dissipation energy. Then, extending a classical duality principle (see Lemma \ref{divcurl}), we prove that this difference is non-negative (see Theorem \ref{Thineq}), and equal to zero if and only if the Hall matrix satisfies some compactness condition. In the periodic case, this reads as (see Corollary \ref{Thineqper}) \eq D(h,h):=\mathscr M_*(h,h) \, \langle j \rangle \cdot \langle j \rangle - \left \langle \mathscr M(h,h) \, j \cdot j\right \rangle \geq 0,\label{introD}\qe for any unperturbed current field $j$. Moreover, \eqref{introD} is an equality if and only if \eq \mathrm{Curl} \left( \mathscr{E}(R \, h) j \right) =0 \quad \text{in } \mathscr D'(\R^3). \label{introcond}\qe 

We also investigate the behaviour of higher even-order terms. We show that inequality \eqref{introD} reverses when the magneto-resistance is replaced by the fourth-order term and the Hall matrix is assumed to be zero (see Proposition \eqref{4}). However, the equivalent of $D(h,h)$ for even-order term higher or equal to $4$ may have both a positive and a negative eigenvalue, so that \eqref{introD} cannot be extended (see Proposition \ref{cx}).

Then, the condition of equality \eqref{introcond} is discussed in the case of columnar  composites. First, an explicit formula for the difference of dissipation energies $D(h,h)$ \eqref{introD} is given (see Proposition~\ref{onedirection}) for a periodic material which is layered in some direction $\xi$. Second, for a general columnar structure in the direction $e_3$, the equality $D(h,h) = 0$ is shown to be very sensitive to the orientation of the magnetic field (see Proposition \ref{cyl}). More precisely, the equality $D(h,h) = 0$ implies that $\sigma(y_1,y_2)$ is a tensor product of type $f( h_1 y_1 + h_2 y_2) \, g (h_2 y_1 - h_1 y_2)$. For example, in the case of a four-phase checkerboard $\alpha_1, \alpha_2, \alpha_3, \alpha_4$ (see figure \ref{check1} below), we obtain that for any magnetic field $h \neq 0$ perpendicular to $e_3$, $D(h,h) \neq 0$ if $\alpha_1 \, \alpha_3 \neq \alpha_2 \, \alpha_4$ (see Proposition \ref{check}).

\bigskip

The paper is organized as follows. In Section \ref{sect1}, we recall results on the homogenization of the Hall effect and the magneto-resistance in order to establish an asymptotic formula for the effective magneto-resistance. In Section \ref{sect2}, we prove inequality \eqref{introD} and deal with the case of higher-order terms. Section \ref{exemples} is devoted to the case of equality for layered and columnar composites.

\subsection*{Notations}

\begin{itemize}
\item[$\bullet$] $|\cdot|$ denotes the euclidean norm in $\R^d$ for any positive integer $d$ and $(e_1, \ldots , e_d)$ the canonic basis of $\R^d$.
\item[$\bullet$] $\times$ denotes the cross product and $\otimes$ the tensor product in $\R^3$.
\item[$\bullet$] $\R^{d \times d}$ denotes the set of the real-valued ($ d \times d$) matrices and $I_d$ denotes the unit matrix of $\R^{ d \times d}$. 
\item[$\bullet$] $\R_a^{d \times d}$ (resp. $\R_s^{d \times d}$) is the set of the real-valued ($ d \times d$) antisymmetric matrices (resp. symmetric matrices). 
\item[$\bullet$] $A^s$ denotes the symmetric part of $A$, $A^\T$ its transposed matrix, and $\text{Cof}(A)$ its cofactors matrix.
\item[$\bullet$] $\Omega$ denotes a bounded open set of $\R^d$.
\item[$\bullet$] For $\alpha, \beta > 0$, $\mathcal M(\alpha,\beta ; \Omega)$ denotes the set of the invertible matrix-valued functions $A$ measurable in $\Omega$ and such that \eq \forall \ \xi \in \R^d, \quad A(x) \xi \cdot \xi \geq  \alpha |\xi|^2 \quad \text{and} \quad A(x)^{-1} \xi \cdot \xi \geq \beta^{-1} |\xi|^2 , \quad \text{a.e. } x \in \Omega. \label{M}\qe
\item[$\bullet$] For a vector-valued function $U : \Omega \longrightarrow \R^d$, \eqs DU :=\left[ \frac{\partial U_j}{\partial x_i}\right]_{1 \leq i,j \leq d} \quad \text{,} \quad \div\left(U \right) = \Sum_{i=1}^d \frac{\partial U_i}{\partial x_i} \quad \text{and} \quad \mathrm{curl}\left(U \right) = \left( \frac{\partial U_{i}}{\partial x_j} -  \frac{\partial U_j}{\partial x_i}\right)_{1 \leq i,j \leq d}. \qes
\item[$\bullet$] For a matrix-valued function $\Sigma : \Omega \longrightarrow \R^{d \times d}$, \eqs \Div\left(\Sigma \right) = \left( \Sum_{i=1}^d \frac{\partial \Sigma_{i,j}}{\partial x_i} \right)_{1 \leq j \leq d} \quad  \text{and} \quad \mathrm{Curl}\left(\Sigma \right) = \left( \frac{\partial \Sigma_{i,k}}{\partial x_j} -  \frac{\partial \Sigma_{j,k}}{\partial x_i}\right)_{1 \leq i,j,k \leq d}. \qes
\item[$\bullet$] If $H$ is a vector space endowed with a norm $\Vert \cdot \Vert$, the equality $g_\e(h) = o_H(|h|^n)$, for $n \in \mathbb N$, means that \eq \lim \limits_{h \to 0} \left( \frac{1}{|h|^n} \sup \limits_{\e > 0} \Vert g_\e(h)\Vert \right)=0. \label{def-o}\qe
\item[$\bullet$] For $k \in \mathbb N$, $\mathscr C_c^k(\Omega)$ denotes the space of $k$-continuously derivable functions with compact support in $\Omega$.
\item For $\nu = (\nu_1, \ldots , \nu_d) \in \mathbb N^d$, we denote \eqs |\nu| = \nu_1 + \cdots + \nu_d \quad \text{and}\quad \displaystyle \frac{\partial^{|\nu|} }{\partial x^\nu} = \frac{\partial^{\nu_1} }{\partial x_1^{\nu_1}} \cdots \frac{\partial^{\nu_d} }{\partial x_d^{\nu_d}} .\qes 
\item[$\bullet$] $Y:=(0,1)^3$, and the $Y$-average is denoted $\langle \cdot \rangle$.
\item[$\bullet$] $H_\sharp(Y;Z)$ denotes the space of the $Y$-periodic functions from $\R^3$ to $Z$ which belong to $H_{\text{loc}}(\R^d)$ for a generic function space $H$.

\end{itemize}

\begin{Rem}
Consider a sequence $g_\e(h)$ in $H$ which satisfies the expansion of order $n \in \N$,
\eq g_\e(h) = g_\e^0 + g_\e^1(h) + \cdots + g_\e^n(h,\ldots,h) + o_{H}(|h|^n), \qe where for any $k \leq n$, $h \mapsto g_\e^k(h, \ldots, h)$ are bounded sequences of $k$-linear symmetric forms in $H$. In view of \eqref{def-o} each term $g_\e^k(h,\ldots,h)$ of the expansion inherits of the same (weak or strong) convergence of $g_\e(h)$ in $H$. \label{Remconv}
\end{Rem}

Let us recall the definition of the $H$-convergence due to Murat, Tartar \cite{tpeccot}:\begin{Def}[Murat, Tartar \cite{tpeccot}]
A sequence $A_\e$ in $\mathcal M(\alpha,\beta;\Omega)$ is said to $H$-converge to the matrix-valued function $A_*$ if for any distribution $f \in H^{-1}(\Omega)$, the solution $u_\e \in H_0^1(\Omega)$ of the equation $\mathrm{div}(A_\e \nabla u_\e) = f$ satisfies the convergences \eq u_\e \rightharpoonup u_* \text{  weakly in } H_0^1(\Omega) \quad \text{  and  } \quad  A_\e \nabla u_\e \rightharpoonup A_* \nabla u_* \text{ weakly in } L^2(\Omega)^2, \qe where $u_*$ solves in $H_0^1(\Omega)$ the homogenized equation $\mathrm{div}(A_* \nabla u_*) = f$.
\label{Hconv}
\end{Def}

Murat and Tartar \cite{tpeccot} proved that for any sequence $A_\e$ in $\mathcal M(\alpha,\beta;\Omega)$, there exist $A_*$ in $\mathcal M(\alpha,\beta;\Omega)$ and a subsequence of $A_\e$ which $H$-converges to $A_*$.



\section{The three-dimensional effective magneto-resistance}

\label{sect1}

\subsection{The three-dimensional Hall effect and magneto-resistance}

Let $\alpha,\beta >0$, and let $\Omega$ be a regular bounded domain of $\R^3$. Consider a heterogeneous conductor in $\Omega$, with a symmetric matrix-valued conductivity $\sigma_\e \in \mathcal M(\alpha, \beta;\Omega)$ (see \eqref{M}), associated with the resistivity $\rho_\e : = \left(\sigma_\e\right)^{-1}$. Here, $\e$ is a small positive parameter which represents the scale of the microstructure. In the presence of a magnetic field $h \in \R^3$, it is known (see, \textit{e.g.}, \cite{LaLi}) that the perturbed resistivity satisfies the property \eq \rho_\e(-h) = \rho_\e(h)^\T.  \label{cond-sym}\qe Also assume that the conductivity satisfies the following regularity properties: there exist an open ball $O$ in $\R^3$ centered at $0$ and $b \in L^\infty(\Omega)$ such that for any $\e > 0$ and any multi-index $|\nu| \leq 2,$\begin{equation} \; \left\{\!\! \begin{array}{l}
\vspace{0.2cm} \sigma_\e(h) \in \mathcal M(\alpha, \beta;\Omega), \quad \forall \ h \in O, \\
\vspace{0.2cm} h \mapsto \sigma_\e(h)(x) \text{ is of class } \mathscr C^{|\nu|} \text{ on } O, \quad \forall \ x \in \Omega , \\
\displaystyle \left| \frac{\partial^{|\nu|} \sigma_\e}{\partial h^\nu}(h)(x) - \frac{\partial^{|\nu|} \sigma_\e}{\partial h^\nu}(k)(x) \right| \leq b(x) \ |h-k|, \quad \forall \ h,k \in O, \text{ a.e. } x \in \Omega .
\end{array} \right. \;\;\;\label{cond-reg}\end{equation} As a consequence of \eqref{cond-reg}, the resistivity $\rho_\e(h)$ satisfies the second-order expansion\eq \rho_\e(h) = \rho_\e + \mathscr R_\e(h) + \mathscr M_\e(h,h) + o_{L^\infty(\Omega)^{3 \times 3}}(|h|^2),\label{exp-p1}\qe where $\mathscr R_\e:  \R^3 \rightarrow L^\infty(\Omega)^{3 \times 3}$  and $\mathscr M_\e:  \R^3 \times \R^3 \rightarrow L^\infty(\Omega)^{3 \times 3}$ are sequences of linear operators uniformly bounded with respect to $\e$. By virtue of \eqref{cond-sym}, for any $h$ in $\R^3$, $\rho_\e(0)$ and $\mathscr M_\e(h,h)$ are symmetric matrix-valued functions, while $\mathscr R_\e(h)$ is an antisymmetric matrix-valued function. The matrix-valued function defined by (see \eqref{defR} and \cite{BM1} for more details) \eq R_\e h := \mathscr E^{-1} \big( \mathscr R_\e(h) \big), \qe and the second-order term $\mathscr M_\e(h,h)$ are respectively called the Hall matrix and the magneto-resistance associated with the perturbed resistivity $\rho_\e(h)$, so that
\eq \rho_\e(h) = \rho_\e + \mathscr E (R_\e h) + \mathscr M_\e(h,h) + o_{L^\infty(\Omega)^{3 \times 3}}(|h|^2).\label{exp-p} \qe

\begin{Rem}
Since by assumption $\mathscr R_\e :  \R^3 \rightarrow L^\infty(\Omega)^{3 \times 3}$ is uniformly bounded with respect to $\e$, $R_\e$ is a bounded sequence in $L^\infty(\Omega)^{3 \times 3}$.
\end{Rem}

Similarly, we define the linear operators $S_\e :  \R^3 \rightarrow L^\infty(\Omega)^{3 \times 3}$ and $\mathscr N_\e :  \R^3 \times \R^3 \rightarrow L^\infty(\Omega)^{3 \times 3}$ by \eq \sigma_\e(h) = \rho_\e(h)^{-1} = \sigma_\e + \mathscr E (S_\e h) + \mathscr N_\e(h,h) + o_{L^\infty(\Omega)^{3 \times 3}}(|h|^2),\label{exp-s} \qe which are uniformly bounded with respect to $\e$.

\subsection{Homogenization of the magneto-resistance}

Assume that $\sigma_\e(h)$ $H$-converges (see definition \ref{Hconv}) to $\sigma_*(h)$ for any $h \in O$. In fact due to the compactness of $H$-convergence \cite{tpeccot} this holds true for a subsequence of $\e$ and a countable set of~$h$. Then, by \cite{ColSpa} (Theorem 2.5 in the symmetric case) and \cite{BocMur} (Theorem 3.1 in the non symmetric case), the effective (or homogenized) conductivity $\sigma_*(h)$ satisfies $\sigma_*(-h) = \sigma_*(h)^\T$. By the regularity conditions \eqref{cond-reg} (see \cite{Brimag} and \cite{ColSpa} for more details), as in \eqref{exp-s}, we have the second-order expansion\eq \sigma_*(h) = \sigma_* + \mathscr E(S_* h) + \mathscr N_*(h,h) +o(|h|^2).\label{exp-cond*}\qe Moreover, by taking the inverse of \eqref{exp-cond*}, the effective resistivity $\rho_*(h) : =\sigma_*(h)^{-1}$ also expands as \eq \rho_*(h) = \rho_* + \mathscr E(R_* h) + \mathscr M_*(h,h) +o(|h|^2),\label{exp-p*}\qe where $R_* \in L^\infty(\Omega)^{3 \times 3}$ is the effective Hall matrix and $\mathscr M_* :  \R^3 \times \R^3 \rightarrow L^\infty(\Omega)^{3 \times 3}$ is the effective magneto-resistance tensor of the composite. We have the following result:

\begin{Prop}
The following relations hold for any $h \in O$,

\eq S_\e = -\, \mathrm{Cof}(\sigma_\e) \, R_\e \quad \text{and} \quad S_* = -\, \mathrm{Cof}(\sigma_*) \, R_*,  \label{relaS}\qe

\eq \mathscr N_\e(h,h) = - \, \sigma_\e \, \mathscr M_\e(h,h) \, \sigma_\e + \mathscr E(S_\e h) \, \sigma_\e^{-1} \, \mathscr E(S_\e h), \label{relaN}\qe \eq \quad \mathscr N_*(h,h) = - \, \sigma_* \,  \mathscr M_*(h,h) \, \sigma_* + \mathscr E(S_* h) \, \sigma_*^{-1} \, \mathscr E(S_* h). \label{relaN*}\qe
\label{relations}
\end{Prop}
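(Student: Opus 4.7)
The plan is to substitute the expansions \eqref{exp-p} and \eqref{exp-s} into the identity $\sigma_\e(h)\,\rho_\e(h) = I_3$ and match the coefficients of powers of $h$ up to order two. Because the homogenized expansions \eqref{exp-cond*} and \eqref{exp-p*} have exactly the same algebraic structure as \eqref{exp-p} and \eqref{exp-s}, the very same chain of manipulations carried out in the starred setting produces the effective relations once the local ones are established; I will therefore treat only the unstarred case and transcribe the result to obtain \eqref{relaN*}.

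The zeroth-order coefficient is trivially $\sigma_\e \rho_\e = I_3$. At first order in $h$, identification gives
\eqs
\sigma_\e \, \mathscr E(R_\e h) + \mathscr E(S_\e h) \, \rho_\e = 0,
\qes
which, after right-multiplying by $\sigma_\e$ and using the symmetry $\sigma_\e^\T = \sigma_\e$, becomes $\mathscr E(S_\e h) = -\sigma_\e \, \mathscr E(R_\e h) \, \sigma_\e^\T$. To translate this into the first half of \eqref{relaS}, I would invoke the algebraic identity
\eqs
A \, \mathscr E(\eta) \, A^\T = \mathscr E\bigl(\mathrm{Cof}(A) \, \eta\bigr), \qquad \forall \, A \in \R^{3 \times 3}, \ \forall \, \eta \in \R^3,
\qes
which follows from the classical formula $(Au) \times (Av) = \mathrm{Cof}(A)\,(u \times v)$ combined with the convention for $\mathscr E$ fixed in \eqref{defR}. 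The injectivity of $\eta \mapsto \mathscr E(\eta)$ then yields $S_\e = -\mathrm{Cof}(\sigma_\e)\,R_\e$.

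At second order, the coefficient of $h \otimes h$ reads
\eqs
\sigma_\e \, \mathscr M_\e(h,h) + \mathscr E(S_\e h)\,\mathscr E(R_\e h) + \mathscr N_\e(h,h) \, \rho_\e = 0.
\qes
I would solve for $\mathscr N_\e(h,h)$, right-multiply by $\sigma_\e$ to make $\rho_\e$ disappear, and substitute the rewritten first-order identity $\mathscr E(R_\e h) = -\rho_\e \, \mathscr E(S_\e h) \, \rho_\e$ (obtained by left-multiplying the first-order relation by $\rho_\e$). The cross term then collapses to $\mathscr E(S_\e h) \, \sigma_\e^{-1} \, \mathscr E(S_\e h)$, giving exactly \eqref{relaN}. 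Applying these same manipulations to \eqref{exp-cond*}--\eqref{exp-p*} yields \eqref{relaN*}.

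The only piece that is not purely formal bookkeeping of Taylor coefficients is the cofactor identity above, and I expect its verification to be the only genuine obstacle: because the map $\mathscr E$ in \eqref{defR} is not exactly the standard cross-product matrix, the sign and index conventions must be checked directly against \eqref{defR} (most efficiently on diagonal matrices and on the basis $e_1, e_2, e_3$) before the formula can be invoked. Everything else — the matching of coefficients, the inversion of the first-order relation, and the substitution leading to \eqref{relaN} — is a mechanical computation.
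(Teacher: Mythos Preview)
Your proposal is correct and follows essentially the same route as the paper: expand $\sigma_\e(h)\rho_\e(h)=I_3$, read off the first- and second-order coefficients, and convert the first-order matrix identity into \eqref{relaS} via the cofactor formula (which the paper states as \eqref{lemalg}, quoting \cite{BM1}, in the equivalent form $P^\T\mathscr E(\xi)P=\mathscr E(\mathrm{Cof}(P)^\T\xi)$). The only cosmetic difference is that the paper introduces a scalar parameter $t$, writes the expansion in powers of $t$, and divides by $t$ and $t^2$ to isolate the coefficients, whereas you match coefficients directly; and your caution about checking the identity against the nonstandard convention \eqref{defR} is well placed but, as the paper shows, the formula does hold in that convention.
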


\begin{proof} By the first-order expansions \eqref{exp-p} and \eqref{exp-s} we have, for any $h \in O$ and any $t > 0$ small enough, \begin{align*} 0 &= -I_3 + \big(\sigma_\e + \mathscr E ( t S_\e h) + \mathscr N_\e(t h,t h)\big)\big(\rho_\e + \mathscr E (t R_\e h) + \mathscr M_\e(t h,t h)\big) + o_{L^\infty(\Omega)^{3 \times 3}}(t^2  |h|^2) \\
&= t \big( \mathscr E ( S_\e h) \sigma_\e^{-1} + \sigma_\e \mathscr E ( R_\e h) \big) \\& + t^2 \big(\mathscr N_\e(h,h) \sigma_\e^{-1} + \mathscr E ( S_\e h) \mathscr E ( R_\e h) + \sigma_\e \mathscr M_\e(h,h) \big) + o_{L^\infty(\Omega)^{3 \times 3}}(t^2 |h|^2). \end{align*} Then, dividing by $t$ the previous equality and letting $t$ tend to zero we obtain \eq \mathscr E ( R_\e h) = - \, \sigma_\e^{-1} \, \mathscr E ( S_\e h) \, \sigma_\e^{-1}. \label{eqinter'}\qe Hence, we get that \eq 0 = t^2 \big(\mathscr N_\e(h,h) \ \sigma_\e^{-1} - \mathscr E ( S_\e h) \ \sigma_\e^{-1} \ \mathscr E ( S_\e h) \ \sigma_\e^{-1} + \sigma_\e \ \mathscr M_\e(h,h) \big) + o_{L^\infty(\Omega)^{3 \times 3}}(t^2 |h|^2).\qe We divide this equality by $t^2$ and let $t$ tend to 0 to get \eqref{relaN}.

The first equality of \eqref{relaS} is a straightforward consequence of the following algebraic lemma which is proved in \cite{BM1} (Lemma 1):

\eq \forall \ P \in \R^{3 \times 3}, \quad P^\T \mathscr E(\xi) P = \mathscr E \big( \mathrm{Cof}(P)^\T \xi \big). \label{lemalg}\qe Applying \eqref{lemalg} to the equality \eqref{eqinter}, and using that $\sigma_\e$ is symmetric, we get that for any $h \in O$, \eq \mathscr E ( S_\e h) = -\sigma_\e \mathscr E ( R_\e h) \sigma_\e = - \sigma_\e^\T \mathscr E ( R_\e h) \sigma_\e =   \mathscr E \big( \! -\mathrm{Cof}(\sigma_\e)^\T R_\e h \big) = \mathscr E \big( \! -\mathrm{Cof}(\sigma_\e) R_\e h \big), \qe which shows the first part of \eqref{relaS} due to the invertibility of $\mathscr E$.

The proof for the homogenized quantities in \eqref{relaS} and \eqref{relaN*} is quite similar.
\end{proof}

\subsubsection{The general case}

We now give an analogous in dimension three of Theorem 2.2 of \cite{Brimagneto} in order to give weak convergences of the effective Hall matrix and the magneto-resistance. All the subsequences parametrized by $h$ converge up to a subsequence of $\e$. Due to \eqref{cond-reg}, the linearity or the quadratic dependence in $h$, the convergences hold for any $h$. From now on, we consider a subsequence still denoted by $\e$ such that all the sequences converge as $\e$ tends to $0$ and for any $h$ in $O$.

First of all, we need to introduce a corrector $P_\e(h)$ (or electric field) in the sense of Murat-Tartar (see \cite{tpeccot}), which is the gradient of a vector-valued $U_\e(h)$ associated with the unperturbed conductivity $\sigma_\e$ in $\mathcal M(\alpha,\beta;\Omega)$. To this end consider the solution $U_\e(h)$ in $H^1(\Omega)^3$  of the problem\begin{equation} \; \left\{\!\! \begin{array}{r c l l}
\vspace{0.2cm}\Div \big(\sigma_\e(h) DU_\e(h) \big) &=& \Div\big(\sigma_*(h)\big) \ \ &\mathrm{in} \ \mathscr D'(\Omega), \\
\displaystyle U_\e(h)(x) - x &=& 0 &\text{on } \partial \Omega.
\end{array} \right. \;\;\;\label{def-corr}\end{equation} Thanks to $H$-convergence and the Meyers estimate of \cite{Mey}, there exists a number $p > 2$ which only depends on $\alpha, \beta, \Omega$, such that the corrector $P_\e(h) :=DU_\e(h)$ satisfies, for any $h \in O$,\eq P_\e(h) \rightharpoonup I_3 \quad \text{ weakly in }L^p(\Omega)^{3 \times 3}. \label{Prop-P}\qe The knowledge of such a corrector combined with the div-curl lemma (see \cite{MurComp} and \cite{TarComp}) permits to derive the effective perturbed effective conductivity by the following convergence \eq \sigma_\e(h) P_\e(h) \rightharpoonup \sigma_*(h) \quad \text{ weakly in }L^p(\Omega)^{3 \times 3}. \label{conv-P}\qe By the regularity condition \eqref{cond-reg}, the coercivity of $\sigma_\e(h)$ and the Meyers estimate \cite{Mey}, the potential $U_\e(h)$ and the corrector $P_\e(h)$ admit the following second-order expansions in $h$:\begin{align} \vspace{0.4cm} U_\e(h) = \vspace{2cm} U_\e^0 + U_\e^1(h) + U_\e^2(h,h) + o_{W^{1,p}(\Omega)^3}(|h|^2)\label{exp-U}, \\ \vspace{2cm} P_\e(h) = P_\e^0 + P_\e^1(h) + P_\e^2(h,h) + o_{L^p(\Omega)^{3 \times 3}}(|h|^2).\label{exp-P}\end{align} We can now state the following result:

\begin{Theo}
Assume that the conditions \eqref{cond-sym}-\eqref{exp-p*} are satisfied, and that the norms of the Hall matrix $R_\e$ and the local magneto-resistance tensor $\mathscr M_\e$ are bounded in $L^\infty(\Omega)$. Then, the effective Hall matrix $R_*$, the effective $S$-matrix $S_*$ and the effective magneto-resistance are given by the following limits, for any $h \in O$, \eq S_* = \lim \limits_{w-L^1(\Omega)}\mathrm{Cof}(P_\e^0)^\T \, S_\e, \quad \mathrm{Cof}(\sigma_*) \, R_* = \lim \limits_{w-L^1(\Omega)}\mathrm{Cof}(\sigma_\e P_\e^0)^\T \, R_\e,  \label{Thconv}\qe and \eq\begin{array}{ r c l}
   \sigma_* \, \mathscr M_*(h,h) \, \sigma_* & = &\lim \limits_{w-L^1(\Omega)} \big( \sigma_\e P_\e^0\big)^\text{T} \, \mathscr M_\e(h,h) \, \big( \sigma_\e P_\e^0\big) - \mathscr E(S_* h)^\text{T} \, \sigma_*^{-1} \, \mathscr E(S_* h) \\
   & + & \lim \limits_{w-L^1(\Omega)} \big( \mathscr E(S_\e h) P_\e^0 + \sigma_\e P_\e^1(h)\big)^\text{T} \, \sigma_\e^{-1} \, \big( \mathscr E(S_\e h) P_\e^0 + \sigma_\e P_\e^1(h)\big),
\end{array} \label{Thconv1}\qe where $w - L^1(\Omega)$ means that the convergence holds weakly in $L^1(\Omega)$  and $P_\e^0$, $P_\e^1(h)$ are the matrix-valued gradient which satisfy \eqref{exp-P}. \label{limites} \label{THconv}
\end{Theo}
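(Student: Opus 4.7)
The plan is to upgrade the weak convergence $\sigma_\e(h) P_\e(h) \rightharpoonup \sigma_*(h)$ of \eqref{conv-P} to a convergence of the product $(P_\e^0)^\T \sigma_\e(h) P_\e(h)$ via the Murat--Tartar div-curl lemma, and then to identify the three formal orders in $h$ of both sides. The Meyers estimate \cite{Mey} ensures that all products involved are bounded in $L^q(\Omega)$ for some $q > 1$, so that the convergences in $\mathscr D'(\Omega)$ that one produces are automatically weak $L^1(\Omega)$ convergences, as required.

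I first observe that each column of $P_\e^k(h, \ldots, h)$ is the gradient $\nabla (U_\e^k)_j$, hence curl-free, and that the Dirichlet datum $U_\e(h)(x) = x$ on $\partial\Omega$ does not depend on $h$, so $U_\e^k(h, \ldots, h) \rightharpoonup 0$ in $H_0^1(\Omega)^3$ for each $k \geq 1$. Since each column of $\sigma_\e(h) P_\e(h)$ has divergence equal to the corresponding column of $\Div(\sigma_*(h))$, hence is bounded in $H^{-1}(\Omega)$, the div-curl lemma yields
\[
(P_\e^0)^\T \, \sigma_\e(h) P_\e(h) \rightharpoonup \sigma_*(h) \quad \text{in } \mathscr D'(\Omega).
\]
Identifying the order-one coefficient in $h$, and noting by the same device that $(P_\e^0)^\T \sigma_\e P_\e^1(h) \rightharpoonup 0$ (via transposing and pairing the curl-free $P_\e^1(h) \rightharpoonup 0$ with the div-bounded $\sigma_\e P_\e^0$), one gets $(P_\e^0)^\T \mathscr{E}(S_\e h) P_\e^0 \rightharpoonup \mathscr{E}(S_* h)$. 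The algebraic identity \eqref{lemalg} rewrites the left-hand side as $\mathscr{E}(\mathrm{Cof}(P_\e^0)^\T S_\e h)$, and the injectivity of $\mathscr{E}$ yields the first formula of \eqref{Thconv}; the companion expression for $R_*$ follows from \eqref{relaS}, the symmetry of $\sigma_\e$, and the multiplicativity of the cofactor map.

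The main obstacle is the order-two identification. Matching the order-two coefficient in the previous display gives
\[
(P_\e^0)^\T \left[\sigma_\e P_\e^2(h,h) + \mathscr{E}(S_\e h) P_\e^1(h) + \mathscr N_\e(h,h) P_\e^0\right] \rightharpoonup \mathscr N_*(h,h).
\]
Substituting the local relation \eqref{relaN} for $\mathscr N_\e$ and the homogenized relation \eqref{relaN*} for $\mathscr N_*$, using the symmetry of $\sigma_\e$ to produce $(\sigma_\e P_\e^0)^\T \mathscr{M}_\e(h,h) (\sigma_\e P_\e^0)$, exploiting the antisymmetry $\mathscr{E}^\T = -\mathscr{E}$ to recognise that $-\mathscr{E}(S_*h)^\T \sigma_*^{-1} \mathscr{E}(S_*h) = \mathscr{E}(S_*h) \sigma_*^{-1} \mathscr{E}(S_*h)$, and completing the square on the cross terms so as to make the bracket $[\mathscr{E}(S_\e h) P_\e^0 + \sigma_\e P_\e^1(h)]^\T \sigma_\e^{-1} [\mathscr{E}(S_\e h) P_\e^0 + \sigma_\e P_\e^1(h)]$ appear, the desired identity \eqref{Thconv1} is seen, after all redundant terms cancel, to be equivalent to the two vanishing limits
\[
\lim_{\e \to 0} (P_\e^1(h))^\T Q_\e(h) = 0 \quad \text{and} \quad \lim_{\e \to 0} (P_\e^0)^\T \sigma_\e P_\e^2(h,h) = 0 \quad \text{in } \mathscr D'(\Omega),
\]
where $Q_\e(h) := \mathscr{E}(S_\e h) P_\e^0 + \sigma_\e P_\e^1(h)$ is the order-one coefficient of $\sigma_\e(h) P_\e(h)$. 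Crucially, $Q_\e(h)$ inherits a divergence bounded in $H^{-1}(\Omega)$ (namely the order-one term of $\Div(\sigma_*(h))$), and $\sigma_\e P_\e^0$ has divergence $\Div(\sigma_*)$. A further div-curl application, pairing the curl-free, weakly vanishing $P_\e^1(h)$ with the div-bounded $Q_\e(h)$, and (after transposing using the symmetry of $\sigma_\e$) the curl-free, weakly vanishing $P_\e^2(h,h)$ with the div-bounded $\sigma_\e P_\e^0$, yields both vanishings and closes the identification of \eqref{Thconv1}.
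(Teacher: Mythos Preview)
Your argument is correct and follows essentially the same route as the paper's proof: both identify the orders in $h$ of a div-curl product involving $\sigma_\e(h)P_\e(h)$, use the algebraic identity \eqref{lemalg} for the first-order term, substitute the relations \eqref{relaN}--\eqref{relaN*} at second order, and close by the two div-curl cancellations $(P_\e^1)^\T Q_\e(h)\rightharpoonup 0$ and $(\sigma_\e P_\e^0)^\T P_\e^2\rightharpoonup 0$. The only cosmetic difference is that the paper pairs the \emph{full} corrector $P_\e(h)^\T$ with $\sigma_\e(h)P_\e(h)$ (its \eqref{exp-PsP}) and then discards the extra terms $(P_\e^1)^\T\sigma_\e(h)P_\e(h)$, $(P_\e^2)^\T\sigma_\e(h)P_\e(h)$ by div-curl, whereas you pair only $(P_\e^0)^\T$ from the start; this is a mild streamlining, not a different method.
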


\begin{proof} The proof uses similar expansions as in \cite{Brimag} combined with algebraic specificities of dimension~3. Taking into account the expansions \eqref{exp-s} and \eqref{exp-P}, we have: \begin{eqnarray}
   \sigma_\e(h) P_\e(h) & = & \sigma_\e P_\e^0 + \big( \sigma_\e P_\e^1(h) + \mathscr E(S_\e h) P_\e^0 \big) \nonumber \\
   & + &\big(\sigma_\e P_\e^2(h,h) + \mathscr E(S_\e h) P_\e^1(h) + \mathscr N_\e(h,h) P_\e^0 \big) + o_{L^2(\Omega)^{3 \times 3}}(|h|^2). \label{exp-sP}
\end{eqnarray} By virtue of Remark \ref{Remconv}, using the properties \eqref{def-corr}-\eqref{conv-P} satisfied by the corrector $P_\e(h)$ in the expansions \eqref{exp-P}, \eqref{exp-sP} and \eqref{exp-cond*}, we get that\begin{equation} \; \left\{\!\! \begin{array}{r c l}
\vspace{0.2cm} P_\e^0 & \harpoon & I_3 \quad \text{weakly in } L^{p}(\Omega)^{3 \times 3},\\
\vspace{0.2cm} P_\e^1(h) & \harpoon & 0  \ \quad \text{weakly in } L^{p}(\Omega)^{3 \times 3},\\
P_\e^2(h,h) & \harpoon & 0 \ \quad \text{weakly in } L^{p}(\Omega)^{3 \times 3},
\end{array} \right. \;\;\;\label{conv3}\end{equation} and \begin{equation} \; \left\{\!\! \begin{array}{r c l}
 \vspace{0.2cm}\Div\big(\sigma_\e P_\e^0\big) & = & \Div\big(\sigma_*\big),\\
\Div\big(\sigma_\e P_\e^1(h) + \mathscr E(S_\e h) P_\e^0 \big) & = & \Div\big(\mathscr E(S_* h)\big) 
\end{array} \quad \text{ in } \mathscr D'(\Omega)^{3 \times 3}. \right. \label{div2}\;\;\;\end{equation}Moreover, from the expansions \eqref{exp-P}, \eqref{exp-sP} and the symmetry of $\sigma_\e$, we deduce that\begin{eqnarray}
   P_\e(h)^\T\sigma_\e(h) P_\e(h) & = & \big(P_\e^0)^\T \sigma_\e P_\e^0 + \big(P_\e^0)^\T \mathscr E(S_\e h) P_\e^0 + \big(P_\e^0)^\T \big( \mathscr E(S_\e h) P_\e^1(h) + \mathscr N_\e(h,h) P_\e^0 \big) \nonumber \\
   & + &\big(\sigma_\e P_\e^0)^\T  P_\e^2(h,h) + \big(\sigma_\e P_\e^0)^\T  P_\e^1(h) \nonumber \\
   &  + &\big(P_\e^1(h)\big)^\T \sigma_\e(h) P_\e(h) + \big(P_\e^2(h,h)\big)^\T \sigma_\e(h) P_\e(h)  + o_{L^{p/2}(\Omega)^{3 \times 3}}(|h|^2). \label{exp-PsP}
\end{eqnarray} Then, taking into account \eqref{Prop-P}, \eqref{conv3}, \eqref{div2}, the div-curl lemma implies that $P_\e(h)^\T\sigma_\e(h) P_\e(h)$ converges to $\sigma_*(h)$ in $L^{p/2}(\Omega)^{3 \times 3}$, and $\big(P_\e^1(h)\big)^\T \sigma_\e(h) P_\e(h)$, $\big(P_\e^2(h,h)\big)^\T \sigma_\e(h) P_\e(h)$, $\big(\sigma_\e P_\e^0)^\T  P_\e^2(h,h)$, $\big(\sigma_\e P_\e^0)^\T  P_\e^1(h)$ converges to $0$ in $L^{p/2}(\Omega)^{3 \times 3}$. Noting that $\big(P_\e^0\big)^\T \mathscr E(S_\e h) P_\e^0 = \mathscr E \big(\text{Cof}(P_\e^0)^\T S_\e h\big)$ by \eqref{lemalg} and passing to the limit in \eqref{exp-PsP}, we obtain \begin{eqnarray}
   \sigma_*(h) & = & \sigma_* +  \lim \limits_{w-L^1(\Omega)} \mathscr E \big(\text{Cof}(P_\e^0)^\T S_\e h\big) \nonumber \\
   & + &\lim \limits_{w-L^1(\Omega)} \Big[ \big(P_\e^0\big)^\T \mathscr E(S_\e h) P_\e^1(h) + \big(P_\e^0\big)^\T \mathscr N_\e(h,h) P_\e^0 \Big]  +o_{L^{1}(\Omega)^{3 \times 3}}(|h|^2).\label{lim-PsP}
\end{eqnarray} Equating this expression with \eqref{exp-cond*} it follows that \eq \label{convS}\mathscr E \big( S_* h\big) = \lim \limits_{w-L^1(\Omega)} \mathscr E \big(\text{Cof}(P_\e^0)^\T S_\e h\big), \qe and \eq \mathscr N_*(h,h) = \lim \limits_{w-L^1(\Omega)} \Big[\big(P_\e^0\big)^\T \mathscr E(S_\e h) P_\e^1(h) + \big(P_\e^0\big)^\T \mathscr N_\e(h,h) P_\e^0\Big].\label{Ninter}\qe As $\mathscr E$ is an invertible linear mapping, we deduce from \eqref{convS} and \eqref{relaS} the convergences \eqref{Thconv}.

We have, as $\mathscr E(S_\e h)$ is antisymmetric and $\sigma_\e$ symmetric, \begin{align}
   &\hspace{-0.30cm}\big(\sigma_\e P_\e^1(h) + \mathscr E(S_\e h) P_\e^0 \big)^\T \big( \sigma_\e \big)^{-1} \big(\sigma_\e P_\e^1(h) + \mathscr E(S_\e h) P_\e^0 \big) \nonumber \\   
   &\hspace{-0.30cm} = - \big(P_\e^0 \big)^\T \mathscr E(S_\e h) \big( \sigma_\e \big)^{-1} \mathscr E(S_\e h)P_\e^0 - \big(P_\e^0 \big)^\T \mathscr E(S_\e h) P_\e^1(h) + \big(P_\e^1(h)\big)^\T \big(\sigma_\e P_\e^1(h) + \mathscr E(S_\e h) P_\e^0 \big). \label{eqinter}
\end{align} Again, taking into account \eqref{Prop-P}, \eqref{conv3}, \eqref{div2}, the div-curl lemma implies that \eq \big(P_\e^1(h)\big)^\T \big(\sigma_\e P_\e^1(h) + \mathscr E(S_\e h) P_\e^0 \big) \harpoon 0 \quad \text{ weakly in }L^p(\Omega)^{3 \times 3}. \qe Hence, \eqref{eqinter} implies that\begin{align}
   & \lim \limits_{w-L^1(\Omega)} \big(\sigma_\e P_\e^1(h) + \mathscr E(S_\e h) P_\e^0 \big)^\T  \sigma_\e^{-1} \big(\sigma_\e P_\e^1(h) + \mathscr E(S_\e h) P_\e^0 \big) \nonumber \\ & =  - \lim \limits_{w-L^1(\Omega)} \Big[ \big(P_\e^0 \big)^\T \mathscr E(S_\e h) \sigma_\e^{-1} \mathscr E(S_\e h)P_\e^0 + \big(P_\e^0 \big)^\T \mathscr E(S_\e h) P_\e^1(h)\Big].  \label{convinter}
\end{align} Combining the equalities \eqref{relaN}, \eqref{relaN*} of Proposition \ref{relations} with \eqref{Ninter}, \eqref{convinter} and the antisymmetry of $\mathscr E(S_\e h)$ and $\mathscr E(S_* h)$, we obtain that \begin{align}
&\sigma_* \, \mathscr M_*(h,h) \, \sigma_*  - \lim \limits_{w-L^1(\Omega)} \big( \sigma_\e P_\e^0\big)^\text{T} \mathscr M_\e(h,h) \big( \sigma_\e P_\e^0\big) \nonumber\\
&=\lim \limits_{w-L^1(\Omega)} \Big[ \big(P_\e^0\big)^\T \mathscr N_\e(h,h)P_\e^0 - \big(P_\e^0\big)^\T \mathscr E(S_\e h) \sigma_\e^{-1} \mathscr E(S_\e h)P_\e^0 \Big]\nonumber - \mathscr N_*(h,h) + \mathscr E(S_* h) \ \sigma_*^{-1} \mathscr E(S_* h) \nonumber\\
& = - \lim \limits_{w-L^1(\Omega)} \Big[\big(P_\e^0\big)^\T \mathscr E(S_\e h) \sigma_\e^{-1} \mathscr E(S_\e h)P_\e^0 + \big(P_\e^0\big)^\T \mathscr E(S_\e h) P_\e^1(h) \Big] \nonumber - \mathscr E(S_* h)^\T \sigma_*^{-1} \mathscr E(S_* h) \nonumber\\
& = \lim \limits_{w-L^1(\Omega)} \Big[\big(\sigma_\e P_\e^1(h) + \mathscr E(S_\e h) P_\e^0 \big)^\T \big( \sigma_\e \big)^{-1} \big(\sigma_\e P_\e^1(h) + \mathscr E(S_\e h) P_\e^0 \big) \Big] \nonumber - \mathscr E(S_* h)^\T \sigma_*^{-1} \mathscr E(S_* h).
\end{align} In fact, the convergences \eqref{Thconv} and \eqref{Thconv1} hold in $L^{p/2}(\Omega)^{3 \times 3}$. \end{proof}

\subsubsection{The periodic case}

We now give a corollary of Theorem \ref{limites} for periodic media. To this end, set $Y:=(0,1)^3$, and consider the $\varepsilon Y$-periodic conductivity \eq \sigma_\e(h)(x):=\sigma(h)\left(\frac{x}{\e}\right),\qe where $\sigma(h)$ is a $Y$-periodic matrix-valued function. We assume \eqref{cond-sym} and analogous regularity conditions to \eqref{cond-reg}: there exists an open ball $O$ in $\R^3$ centered at $0$ such that \begin{equation} \; \left\{\!\! \begin{array}{l}
\vspace{0.2cm} \sigma(h) \in \mathcal M(\alpha, \beta;\Omega), \quad \forall \ h \in O, \\ h \mapsto \sigma(h)(y) \text{ is of class } \mathscr C^2 \text{ on } O, \quad \forall \ y \in Y.
\end{array} \right. \;\;\;\label{cond-regper}\end{equation} These conditions gives the expansions, like in \eqref{exp-s} and \eqref{exp-p1}  \begin{eqnarray} \sigma(h) = \sigma + \mathscr E (S h) + \mathscr N(h,h) + o_{L^\infty(\Omega)^{3 \times 3}}(|h|^2),\label{exp-condper}\\ \rho(h) = \sigma(h)^{-1}= \rho + \mathscr E (R h) + \mathscr M(h,h) + o_{L^\infty(\Omega)^{3 \times 3}}(|h|^2),\label{exp-p1per}\end{eqnarray} where $\sigma$, $\rho$, $S$, $R$, $\mathscr N$ and $\mathscr M$ are $Y$-periodic functions bounded in $Y$. The corrector $P_\e(h):=DU_\e(h)$ defined in \eqref{def-corr} and \eqref{exp-U} reads as $U_\e(h):=\e U(h)\left(\frac{x}{\e}\right)$, where $U(h)$ is the unique solution in $H_{\text{loc}}^1(\R^3)$ (up to an additive constant) of the problem\begin{equation} \; \left\{\!\! \begin{array}{ r c l l}
\vspace{0.2cm} \Div \big(\sigma(h) DU(h) \big) & =  & 0 & \text{in } \mathscr D'(\R^3), \\
\displaystyle y \mapsto U(h)(y) - y & = & 0 & \text{is $Y$-periodic.}
\end{array} \right. \;\;\;\label{def-corrper}\end{equation} and $P(h):=DU(h)$ with \eq\langle P(h)\rangle = I_3.\label{condmoyper}\qe We have the classical periodic homogenization formula (see, \textit{e.g.}, \cite{tpeccot} for more details) \linebreak[4] $\sigma_*(h) =\left \langle \sigma(h) DU(h) \right \rangle$. By virtue of \eqref{cond-regper} we have expansions similar to \eqref{exp-cond*}, \eqref{exp-p*} and \eqref{exp-P} \eq \sigma_*(h) = \sigma_* + \mathscr E(S_* h) + \mathscr N_*(h,h) +o(|h|^2),\label{exp-cond*per}\qe\eq \rho_*(h) = \sigma_*(h)^{-1} = \rho_* + \mathscr E(R_* h) + \mathscr M_*(h,h) +o(|h|^2),\label{exp-p*per}\qe and\eq P(h) = P^0 + P^1(h) + P^2(h,h) + o_{L^2(\Omega)^{3 \times 3}}(|h|^2).\label{exp-Pper}\qe We can state a corollary to Theorem \ref{limites}:

\begin{Cor}
For a periodic conductor, the effective Hall matrix $R_*$, the effective $S$-matrix $S_*$  and the effective magneto-resistance tensor $\mathscr M_*$ are given by the following relations, for any $h \in O$, \eq S_* = \big \langle \mathrm{Cof}(P^0)^\T S \big \rangle, \quad \mathrm{Cof}(\sigma_*) \ R_* = \big \langle\mathrm{Cof}(\sigma_\e P^0)^\T R \big \rangle, \label{Thconvper}\qe and\begin{eqnarray}
   \sigma_* \, \mathscr M_*(h,h) \, \sigma_* & = &\Big \langle \big( \sigma P^0\big)^\text{T} \mathscr M(h,h) \big( \sigma P^0\big) \Big \rangle - \mathscr E(S_* h)^\text{T} \sigma_*^{-1} \mathscr E(S_* h) \nonumber \\
   & + & \Big \langle \big( \mathscr E(S h) P^0 + \sigma P^1(h)\big)^\text{T} \sigma^{-1} \big( \mathscr E(S h) P^0 + \sigma P^1(h)\big) \Big \rangle,
\label{Thconv1per}\end{eqnarray}\label{limitesper}where $P^0$, $P^1(h)$ are the matrix-valued gradient which satisfy \eqref{exp-Pper}. 
\label{THconvper}
\end{Cor}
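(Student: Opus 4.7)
My plan is to specialize Theorem \ref{THconv} to the periodic setting by replacing each weak $L^1$ limit by the corresponding $Y$-average. The reduction rests on only one non-trivial ingredient: the classical averaging lemma for rapidly oscillating periodic functions.

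First, I would exploit the explicit form of the corrector in the periodic case. From \eqref{def-corrper} one has $U_\e(h)(x) = \e\, U(h)(x/\e)$, hence $P_\e(h)(x) = P(h)(x/\e)$ with $P(h) := DU(h)$ a $Y$-periodic matrix field. Identifying the polynomial parts in $h$ in \eqref{exp-P} and \eqref{exp-Pper} would yield $P_\e^k(h)(x) = P^k(h)(x/\e)$ for $k=0,1,2$. The same rescaling structure applies to the coefficients of the local expansions of $\sigma_\e(h)$ and $\rho_\e(h)$: the functions $\sigma_\e, \rho_\e, S_\e, R_\e, \mathscr N_\e, \mathscr M_\e$ are all of the form $F(\cdot/\e)$ with $F$ the corresponding $Y$-periodic object from \eqref{exp-condper}--\eqref{exp-p1per}.

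Second, I would use the classical averaging lemma: if $F \in L^q_\sharp(Y)$ with $q \in [1,\infty)$, then $F(\cdot/\e) \rightharpoonup \langle F \rangle$ weakly in $L^q(\Omega)$. Applied to products, whenever $F_1, \ldots, F_m$ are $Y$-periodic with $F_1 \cdots F_m \in L^1_\sharp(Y)$, one obtains $F_1(\cdot/\e) \cdots F_m(\cdot/\e) = (F_1 \cdots F_m)(\cdot/\e) \rightharpoonup \langle F_1 \cdots F_m \rangle$ in $L^1(\Omega)$. Sufficient integrability will follow from the $L^\infty$-bounds on $\sigma, R, S, \mathscr M$ combined with the Meyers estimate giving $P(h) \in L^p_\sharp(Y)^{3 \times 3}$ for some $p > 2$ via \eqref{Prop-P}, which is inherited by each $P^k(h)$ by Remark \ref{Remconv}; all products of factors entering \eqref{Thconv1} will then lie in $L^{p/2}_\sharp(Y)$.

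Third, I would substitute these rescaled expressions into \eqref{Thconv}--\eqref{Thconv1}. Each weak $L^1$ limit appearing there is the limit of a function $G(\cdot/\e)$ with $G$ a $Y$-periodic product, and the averaging lemma collapses it into $\langle G\rangle$. Using the algebraic identity $\mathrm{Cof}(F(\cdot/\e)) = \mathrm{Cof}(F)(\cdot/\e)$ to bring the cofactor inside the rescaling, the limit formulas \eqref{Thconvper} and \eqref{Thconv1per} are then read off directly from \eqref{Thconv} and \eqref{Thconv1}. No serious obstacle is expected: the only points requiring care are that cofactors and matrix products genuinely commute with the rescaling operator, and that the integrability exponent $p/2$ above is high enough for the averaging principle to apply, both of which are verified in the steps above.
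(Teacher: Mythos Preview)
Your proposal is correct and matches the paper's treatment: the paper states Corollary~\ref{THconvper} as an immediate specialization of Theorem~\ref{THconv} without giving a separate proof, and your argument supplies exactly the standard details (periodic rescaling of the corrector plus the averaging lemma for oscillating periodic functions) that this specialization requires. The integrability check via the Meyers exponent $p>2$ and the observation that all relevant products are of the form $G(\cdot/\e)$ with $G\in L^{p/2}_\sharp(Y)$ are the right points to verify, and you handle them correctly.
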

\label{secper}

\section{Comparison between the effective magneto-resistance and the local magneto-resistance.}
\label{sect2}
\subsection{The main result}

We now give a generalization of the two-dimensional Theorem 2.4 of \cite{Brimagneto}, and a corollary in the periodic case with the notations of Section \ref{secper}.

\begin{Theo}
Assume that the conditions \eqref{cond-sym}-\eqref{exp-p*} are satisfied, and that the norm of the local Hall matrix $R_\e$ and the norm of the local magneto-resistance tensor $\mathscr M_\e$ are bounded in $L^\infty(\Omega)$. Then, for any $h \in O$, we have \eq
   \sigma_* \, \mathscr M_*(h,h) \, \sigma_* \geq \lim \limits_{w-L^1(\Omega)} \big( \sigma_\e P_\e^0\big)^\text{T} \mathscr M_\e(h,h) \big( \sigma_\e P_\e^0\big). \label{ineq}\qe Moreover, \eqref{ineq} is an equality if and only if \eq \mathrm{Curl} \left( \mathscr{E}(R_\e h) \sigma_\e P_\e^0 \right) \text{ lies in a compact subset of } H^{-1}(\Omega)^{3 \times 3 \times 3}. \label{condRot}\qe \label{Thineq}
\end{Theo}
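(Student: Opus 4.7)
The plan is to start from Theorem~\ref{THconv} and to rewrite the desired inequality \eqref{ineq} as
\begin{equation*}
\lim_{w-L^1(\Omega)} Q_\e^\T\, \sigma_\e^{-1}\, Q_\e \;\geq\; \mathscr E(S_* h)^\T\, \sigma_*^{-1}\, \mathscr E(S_* h),
\end{equation*}
where $Q_\e := \mathscr E(S_\e h)\,P_\e^0 + \sigma_\e\,P_\e^1(h)$. Two facts about $Q_\e$ drive the argument: first, by \eqref{div2}, $\Div Q_\e = \Div\mathscr E(S_* h)$ in $\mathscr D'(\Omega)$, so $\Div Q_\e$ is $\e$-independent (hence compact in $H^{-1}$); second, identifying the $h$-linear term in the weak convergence $\sigma_\e(h)P_\e(h)\rightharpoonup\sigma_*(h)$ (via Remark~\ref{Remconv}) yields $Q_\e \rightharpoonup \mathscr E(S_* h)$ weakly in $L^p$. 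Testing the matrix inequality against an arbitrary $j\in\R^3$ reduces \eqref{ineq} to the scalar lower semicontinuity
$$\lim_{w-L^1(\Omega)} \sigma_\e^{-1}(Q_\e j)\cdot(Q_\e j) \;\geq\; \sigma_*^{-1}\mathscr E(S_*h)j\cdot\mathscr E(S_*h)j.$$

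I would establish this lower semicontinuity through the extended div-curl duality of Lemma~\ref{divcurl}, implemented as a completing-the-square argument. For any smooth $\xi\in\mathscr C_c^\infty(\Omega)^3$, Tartar's method of oscillating test functions supplies $w_\e\in H^1_{\mathrm{loc}}(\Omega)$ with $\nabla w_\e \rightharpoonup \xi$ and $\sigma_\e\nabla w_\e \rightharpoonup \sigma_*\xi$, together with $\sigma_\e\nabla w_\e\cdot\nabla w_\e \rightharpoonup \sigma_*\xi\cdot\xi$ in $\mathscr D'(\Omega)$. The symmetry and positive definiteness of $\sigma_\e^{-1}$ yield the pointwise bound
$$\sigma_\e^{-1}(Q_\e j)\cdot(Q_\e j) \;\geq\; 2\,(Q_\e j)\cdot\nabla w_\e \,-\, \sigma_\e\nabla w_\e\cdot\nabla w_\e.$$
Integrating against any nonnegative $\phi\in\mathscr C_c^\infty(\Omega)$ and passing to the limit, the div-curl lemma applied to $(Q_\e j,\nabla w_\e)$ sends the cross term to $\int\phi\,\mathscr E(S_*h)j\cdot\xi$, while the energy term tends to $\int\phi\,\sigma_*\xi\cdot\xi$. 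Optimising the resulting lower bound over $\xi$ approximating $\sigma_*^{-1}\mathscr E(S_*h)j$ in $L^2(\Omega)^3$ (valid by density of smooth fields) produces \eqref{ineq}.

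For the equality case, tightness in the chain above forces
$$\int_\Omega\phi\,\sigma_\e^{-1}\big(Q_\e j-\sigma_\e\nabla w_\e\big)\cdot\big(Q_\e j-\sigma_\e\nabla w_\e\big)\;\longrightarrow\; 0$$
for the optimising choice $\xi\to\sigma_*^{-1}\mathscr E(S_*h)j$; coercivity of $\sigma_\e^{-1}$ promotes this to $Q_\e j-\sigma_\e\nabla w_\e\to 0$ strongly in $L^2_{\mathrm{loc}}(\Omega)^3$, equivalently $\sigma_\e^{-1}Q_\e j-\nabla w_\e\to 0$ strongly. Varying $j$ over a basis of $\R^3$, this means $\mathrm{Curl}(\sigma_\e^{-1}Q_\e)$ lies in a compact subset of $H^{-1}(\Omega)^{3\times 3\times 3}$. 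To translate this into \eqref{condRot}, I would apply \eqref{lemalg} with $P=\sigma_\e$ together with \eqref{relaS}, obtaining $\mathscr E(S_\e h) = -\sigma_\e\mathscr E(R_\e h)\sigma_\e$ and therefore
$$\sigma_\e^{-1}Q_\e \;=\; -\mathscr E(R_\e h)\,\sigma_\e\, P_\e^0 \,+\, P_\e^1(h).$$
Since $P_\e^1(h)$ is a matrix of gradients, $\mathrm{Curl}(P_\e^1(h))=0$, and hence $\mathrm{Curl}(\sigma_\e^{-1}Q_\e)=-\mathrm{Curl}(\mathscr E(R_\e h)\,\sigma_\e\, P_\e^0)$, matching \eqref{condRot} exactly.

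The delicate step is the deployment of Lemma~\ref{divcurl} with a spatially varying field $\xi$: the H-correctors must approximate a non-constant target without spoiling the uniform $L^p$ estimates (the standard route is a local construction with $\xi$-constant correctors patched by a partition of unity), and in the equality case the passage from the $L^1_{\mathrm{loc}}$ vanishing of the squared residual to the $H^{-1}$-compactness of $\mathrm{Curl}(\sigma_\e^{-1}Q_\e)$ must exploit the uniform ellipticity of $\sigma_\e^{-1}$ together with the $L^2$-density of smooth approximants of $\sigma_*^{-1}\mathscr E(S_*h)j$.
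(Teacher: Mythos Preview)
Your proposal is correct and follows essentially the same route as the paper: reduce via Theorem~\ref{THconv} to the inequality
\[
\lim_{w-L^1(\Omega)} Q_\e^\T \sigma_\e^{-1} Q_\e \;\ge\; \mathscr E(S_*h)^\T \sigma_*^{-1} \mathscr E(S_*h),
\qquad Q_\e = \mathscr E(S_\e h)P_\e^0+\sigma_\e P_\e^1(h),
\]
then apply Lemma~\ref{divcurl} columnwise with $A_\e=\sigma_\e$, $\xi_\e=Q_\e\lambda$, $\xi=\mathscr E(S_*h)\lambda$, and translate the curl condition using $\sigma_\e^{-1}Q_\e=-\mathscr E(R_\e h)\sigma_\e P_\e^0+P_\e^1(h)$. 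Two small points: your equality paragraph only sketches the implication ``equality $\Rightarrow$ compactness''; the converse ``compactness $\Rightarrow$ equality'' (handled in the paper via a direct div-curl argument on $\eta_\e=A_\e^{-1}\xi_\e$) is needed for the ``if and only if'' and should be mentioned, even if you ultimately defer to Lemma~\ref{divcurl}. Second, your phrasing ``$\nabla w_\e\rightharpoonup\xi$'' for the non-gradient target $\xi=\sigma_*^{-1}\mathscr E(S_*h)j$ is literally impossible; the localization by partition of unity you mention at the end is not a technical detail but the actual mechanism, and in particular the $J_\e$ contribution coming from the patching (the $\nabla\psi_i$ terms) must be controlled separately, as the paper does.
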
\begin{Cor}
In the periodic case, the constant effective magneto-resistance tensor $\mathscr M_*$ and the constant effective conductivity $\sigma_*$ satisfy the inequality for any $h \in O$, \eq
   \sigma_* \, \mathscr M_*(h,h) \, \sigma_* \geq \Big\langle \big( \sigma P^0\big)^\text{T} \mathscr M(h,h) \big( \sigma P^0\big) \Big \rangle , \quad \text{with } \sigma_* = \big \langle \sigma P^0 \big \rangle , \label{ineqper}\qe where $\sigma(y)$ is the local conductivity and $\mathscr M(h,h)(y)$ is the local magneto-resistance. Moreover, \eqref{ineqper} is an equality if and only if \eq \mathrm{Curl} \left( \mathscr{E}(R h) \ \! \sigma \ \! P^0 \right) =0 \quad \text{in } \mathscr D'(\R^3)^{3 \times 3 \times 3}. \label{condRotper}\qe \label{Thineqper}\end{Cor}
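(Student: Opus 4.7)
My plan is to derive Corollary \ref{Thineqper} as a direct specialization of Theorem \ref{Thineq} to the periodic setting, with one substantive step: translating the abstract compactness condition \eqref{condRot} into the pointwise condition \eqref{condRotper}.

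First I verify the hypotheses of Theorem \ref{Thineq}. Since $\sigma$, $R$ and $\mathscr M$ are bounded $Y$-periodic functions by \eqref{cond-regper}, the oscillating sequences $\sigma_\varepsilon = \sigma(\cdot/\varepsilon)$, $R_\varepsilon = R(\cdot/\varepsilon)$ and $\mathscr M_\varepsilon(h,h) = \mathscr M(h,h)(\cdot/\varepsilon)$ are uniformly bounded in $L^\infty(\Omega)$. Moreover the periodic corrector at $h=0$ takes the form $P_\varepsilon^0(x) = P^0(x/\varepsilon)$, and the classical periodic homogenization formula (recalled above \eqref{exp-cond*per}) gives $\sigma_* = \langle \sigma P^0 \rangle$.

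Second, to pass from \eqref{ineq} to \eqref{ineqper}, I invoke the elementary fact that for any $g \in L^1_\sharp(Y)$, the oscillating sequence $g(\cdot/\varepsilon)$ converges weakly in $L^1(\Omega)$ to the $Y$-average $\langle g \rangle$. Applied to the bounded $Y$-periodic function
\begin{equation*}
g(y) := \bigl(\sigma(y)P^0(y)\bigr)^{\mathrm T}\,\mathscr M(h,h)(y)\,\bigl(\sigma(y)P^0(y)\bigr),
\end{equation*}
this transforms the right-hand side of \eqref{ineq} into $\langle g \rangle$, which is exactly the right-hand side of \eqref{ineqper}.

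Third, for the equality statement, set $F(y) := \mathscr E(R(y)h)\,\sigma(y)\,P^0(y)$; this is a bounded $Y$-periodic matrix field, and the chain rule yields
\begin{equation*}
\mathrm{Curl}\bigl(F(\cdot/\varepsilon)\bigr) \;=\; \varepsilon^{-1}(\mathrm{Curl}_y F)(\cdot/\varepsilon).
\end{equation*}
If $\mathrm{Curl}_y F = 0$ in $\mathscr D'(\R^3)^{3\times3\times3}$, this identically vanishes and \eqref{condRot} holds. Conversely, assume $g := \mathrm{Curl}_y F \not\equiv 0$. Since $g$ is the curl of a periodic function, $\langle g \rangle = 0$, so solving componentwise the periodic Poisson equation $\Delta_y u = g$ on $Y$ yields a zero-mean $G \in H^1_\sharp(Y)$ with $g = \mathrm{div}_y G$. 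Then $\varepsilon^{-1} g(\cdot/\varepsilon) = \mathrm{div}_x\!\left[G(\cdot/\varepsilon)\right]$ is bounded in $H^{-1}(\Omega)^{3\times3\times3}$; to show that it is not precompact there, one pairs it with uniformly $H^1_0$-bounded oscillating test functions of the form $\phi_\varepsilon(x) = \varepsilon\,\chi(x)\,\psi(x/\varepsilon)$, with $\chi \in \mathscr C_c^\infty(\Omega)$ and $\psi \in H^1_\sharp(Y)$ chosen so that, after integration by parts and a Riemann--Lebesgue/averaging argument, the pairing $\bigl\langle \varepsilon^{-1}g(\cdot/\varepsilon), \phi_\varepsilon\bigr\rangle$ produces a non-vanishing limit. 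This rules out strong $H^{-1}$-convergence, hence precompactness, and forces $g = 0$.

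The only real difficulty lies in this final equivalence between the abstract $H^{-1}$-compactness of $\mathrm{Curl}\!\left(F(\cdot/\varepsilon)\right)$ and the pointwise vanishing of the periodic $\mathrm{Curl}_y F$; the remaining steps are a direct transcription of Theorem \ref{Thineq} via standard periodic-homogenization identities.
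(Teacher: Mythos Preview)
Your derivation is correct and matches the paper's intent: the paper gives no separate proof of Corollary \ref{Thineqper}, presenting it as an immediate specialization of Theorem \ref{Thineq} to the periodic setting via the formulas of Corollary \ref{THconvper}. You have correctly identified and carried out the one substantive step the paper leaves implicit, namely the equivalence between the $H^{-1}$-compactness condition \eqref{condRot} and the pointwise vanishing \eqref{condRotper} for periodic data.

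Two minor imprecisions that do not affect the argument: first, $F = \mathscr E(Rh)\,\sigma\,P^0$ need not be bounded, since $P^0$ is only known to lie in $L^p_\sharp(Y)$ for some $p>2$; but $F \in L^2_\sharp(Y)$ suffices throughout. Second, since $g = \mathrm{Curl}_y F$ lies only in $H^{-1}_\sharp(Y)$, solving $\Delta_y u = g$ yields $u \in H^1_\sharp(Y)$ and hence $G = \nabla_y u \in L^2_\sharp(Y)$, not $H^1_\sharp(Y)$ as you wrote; again $G \in L^2_\sharp$ is all you use, both for the $H^{-1}$-boundedness of $\mathrm{div}_x\bigl[G(\cdot/\varepsilon)\bigr]$ and for the averaging limit of $\int G(x/\varepsilon)\cdot\chi(x)\nabla\psi(x/\varepsilon)\,dx$.
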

   
\begin{Rem}
 Then the inequality \eqref{ineqper} can be written, for any $h \in \R^3$, \eq \mathscr M_*(h,h)\langle j \rangle \cdot \langle j \rangle \geq \big \langle \mathscr M(h,h)j \cdot j \big \rangle, \quad \text{with } \langle j \rangle = \sigma_*\langle e \rangle , \label{new}\qe where $e(y) = P^0(y) \langle e \rangle$ is the local electric field and $j(y) = \sigma(y) e(y)$ is the local current field. Inequality \eqref{new} means that the dissipation energy in a composite is greater than or equal to the average of the dissipation energy in each of its phases.
\label{electric}
\end{Rem} 
   
 The proof of Theorem \ref{Thineq} is based on the following result:

\begin{Lem}
Let $\Omega$ be a bounded open subset of $\R^d$. Consider a sequence $A_\e$ of symmetric matrix-valued functions in $\mathcal M(\alpha,\beta;\Omega)$ which $H$-converges to $A_*$, and a sequence $\xi_\e$ of $L^2(\Omega)^{d}$ which satisfies\eq \xi_\e \harpoon \xi \text{ weakly in } L^2(\Omega)^{d } \quad \text{and} \quad \mathrm{div} \left( \xi_\e \right) \longrightarrow \mathrm{div} \left( \xi \right) \text{ strongly in } H^{-1}(\Omega) \label{prop-xi} .\qe Also assume that \eq A_\e^{-1} \xi_\e \cdot \xi_\e \harpoon \zeta \quad \text{ weakly-$*$ in } \mathcal M(\Omega). \label{lem-ineq'}\qe Then, we have the inequality \eq \zeta \geq A_*^{-1} \xi \cdot \xi \quad \text{in } \mathcal M(\Omega)\label{lem-ineq}.\qe Moreover, the inequality \eqref{lem-ineq} is an equality if and only if \eq \mathrm{curl} \big( A_\e^{-1} \xi_\e \big) \text{ lies in a compact subset of } H_{\mathrm{loc}}^{-1}(\Omega)^{d \times d}.\label{rotcompact}\qe
\label{divcurl}
\end{Lem}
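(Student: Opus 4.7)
The plan is to adapt the classical $H$-convergence energy inequality via a corrector combined with the div-curl lemma, in the Murat--Tartar spirit. First I would introduce a matrix-valued corrector $P_\e = DU_\e$ for $A_\e$, obtained by solving the vectorial problem $\mathrm{Div}(A_\e DU_\e) = \mathrm{Div}(A_*)$ with $U_\e(x) = x$ on $\partial\Omega$. Standard $H$-convergence theory together with the Meyers estimate yields that $P_\e \rightharpoonup I_d$ weakly in $L^p(\Omega)^{d\times d}$ for some $p>2$, $A_\e P_\e \rightharpoonup A_*$ weakly, the columns of $P_\e$ are gradients (hence $\mathrm{curl}$-free), and $\mathrm{Div}(A_\e P_\e - A_*) = 0$. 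The starting point is then the non-negative expansion, valid for every constant $\lambda \in \R^d$,
\eqs
A_\e^{-1}(\xi_\e - A_\e P_\e \lambda)\cdot(\xi_\e - A_\e P_\e \lambda) = A_\e^{-1}\xi_\e\cdot \xi_\e - 2\,\xi_\e \cdot P_\e \lambda + A_\e P_\e \lambda \cdot P_\e \lambda \,\geq\, 0.
\qes
Applying the div-curl lemma to the two mixed terms (using the divergence compactness of $\xi_\e$ and of $A_\e P_\e$, and the vanishing of $\mathrm{curl}(P_\e \lambda)$) and passing to the distributional limit produces, for every constant $\lambda \in \R^d$,
\eqs
\zeta - 2\,\lambda\cdot \xi + A_* \lambda \cdot \lambda \,\geq\, 0 \quad \text{in } \mathcal M(\Omega).
\qes

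To turn this constant-$\lambda$ family into the pointwise optimum at $\lambda = A_*^{-1}\xi(x)$, I would approximate $A_*^{-1}\xi \in L^2(\Omega)^d$ by piecewise constant fields $\lambda_n$ on partitions of $\Omega$ into small cubes, apply the previous inequality cube by cube (with $\lambda$ equal to the local value of $\lambda_n$), sum, and pass to $n\to\infty$. This yields $\zeta \geq A_*^{-1}\xi\cdot\xi$ in $\mathcal M(\Omega)$, which is \eqref{lem-ineq}.

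For the equality characterization, I would handle the two directions separately. The direction \eqref{rotcompact} $\Rightarrow$ equality is the clean one: under $H^{-1}_{\mathrm{loc}}$-compactness of $\mathrm{curl}(A_\e^{-1}\xi_\e)$, the div-curl lemma applied directly to $(\xi_\e, A_\e^{-1}\xi_\e)$ yields $\zeta = \xi \cdot \eta$ where $\eta$ is the weak $L^2$-limit of $A_\e^{-1}\xi_\e$; writing $\xi_\e \cdot P_\e \mu = (A_\e^{-1}\xi_\e)\cdot(A_\e P_\e \mu)$ by symmetry of $A_\e$ and applying the div-curl lemma on both sides for any constant $\mu$ identifies $A_*\eta = \xi$, whence $\zeta = A_*^{-1}\xi\cdot\xi$. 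Conversely, when equality holds in \eqref{lem-ineq}, the above chain saturates: the coercivity $A_\e^{-1}Y\cdot Y \geq \beta^{-1}|Y|^2$ combined with the equality limit forces, through a diagonal extraction over the parameters $(\e,n)$, the strong convergence $A_\e^{-1}\xi_\e - P_\e \lambda_n \to 0$ in $L^2_{\mathrm{loc}}(\Omega)^d$. Taking $\mathrm{curl}$ of this strong convergence transfers the compactness of $\mathrm{curl}(P_\e \lambda_n)$ (whose distributional support concentrates on the interfaces of the partition, with $P_\e\lambda_n$ being a gradient on each cube) to $\mathrm{curl}(A_\e^{-1}\xi_\e)$.

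The main obstacle I anticipate is precisely this last step: a rigorous treatment of the $H^{-1}_{\mathrm{loc}}$-compactness of the distributional curl of $P_\e \lambda_n$ living on the cube interfaces of the partition, which requires a careful joint control of the microstructural scale $\e$ of $P_\e$ and the mesh size of the piecewise constant approximation carrying $\lambda_n$.
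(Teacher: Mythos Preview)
Your proof of the inequality \eqref{lem-ineq} and of the implication \eqref{rotcompact} $\Rightarrow$ equality is essentially the same as the paper's: the paper also expands $A_\e^{-1}(\xi_\e - A_\e\nabla v_\e^i)\cdot(\xi_\e - A_\e\nabla v_\e^i)\geq 0$, passes the mixed terms to the limit via div-curl, and approximates $A_*^{-1}\xi$ by constants $\lambda_i$. The only cosmetic difference is that the paper uses local correctors $v_\e^i$ (solving $\mathrm{div}(A_\e\nabla v_\e^i)=\mathrm{div}(A_*\nabla(\widetilde\psi_i\,\lambda_i\cdot x))$ on balls $\omega_i$) together with a \emph{smooth} partition of unity $(\psi_i)$, whereas you use a global matrix corrector $P_\e$ and a piecewise constant approximation on cubes.

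The genuine gap is exactly the obstacle you flag in the converse direction. Your argument for ``equality $\Rightarrow$ \eqref{rotcompact}'' hinges on transferring $H^{-1}_{\mathrm{loc}}$-compactness from $\mathrm{curl}(P_\e\lambda_n)$ to $\mathrm{curl}(A_\e^{-1}\xi_\e)$ via the $L^2_{\mathrm{loc}}$ smallness of $A_\e^{-1}\xi_\e - P_\e\lambda_n$. But for fixed $n$, the distribution $\mathrm{curl}(P_\e\lambda_n)$ is supported on the cube interfaces and involves the traces of $P_\e$ there; since $P_\e$ only converges \emph{weakly} in $L^p$, you get at best a weak (not strong) $H^{-1}$ limit for $\mathrm{curl}(P_\e\lambda_n)$, which is not enough for compactness. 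A diagonal extraction in $(\e,n)$ does not fix this, because the interface contribution is not controlled uniformly in $\e$ for any fixed $n$.

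The paper avoids this difficulty entirely by \emph{not} taking the curl of an approximation. Instead it tests $\mathrm{curl}(A_\e^{-1}\xi_\e)$ directly against an arbitrary sequence $\Phi_\e\rightharpoonup 0$ in $H_0^1(\Omega)^{d\times d}$ with support in a fixed compact, writes
\[
\big\langle \mathrm{curl}(A_\e^{-1}\xi_\e),\Phi_\e\big\rangle
=\int A_\e^{-1}\xi_\e\cdot\mathrm{Div}(\Phi_\e-\Phi_\e^\T)
= I_\e + J_\e,
\]
where $I_\e=\sum_i\int(A_\e^{-1}\xi_\e-\nabla v_\e^i)\cdot\mathrm{Div}(\Phi_\e-\Phi_\e^\T)\,\psi_i$ and $J_\e=\sum_i\int\nabla v_\e^i\cdot\mathrm{Div}(\Phi_\e-\Phi_\e^\T)\,\psi_i$. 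The term $I_\e$ is $O(\sqrt\delta)$ by Cauchy--Schwarz and the equality-case energy estimate (this is your $L^2$ smallness). The key point is the treatment of $J_\e$: after integrating by parts, the antisymmetry of $\Phi_\e-\Phi_\e^\T$ kills the second derivatives of $v_\e^i$, leaving $J_\e=\sum_i\int\nabla v_\e^i\cdot(\Phi_\e-\Phi_\e^\T)\nabla\psi_i$, which is bounded by $C_\delta\,\|\Phi_\e-\Phi_\e^\T\|_{L^2}\to 0$ by Rellich. This works precisely because the partition of unity $\psi_i$ is smooth, so the ``interface'' contribution is carried by $\nabla\psi_i\in C_c^\infty$ rather than by a singular trace. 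Your piecewise-constant approximation on cubes cannot play this role.
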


\begin{Rem}
Inequality \eqref{lem-ineq} is a classical duality inequality in the periodic case (see \cite{Jikov} pp.160--200). However, up our knowledge the non-periodic case and the condition \eqref{rotcompact} of equality are less classical and deserve a proof.
\end{Rem}

\noindent \textit{Proof of Theorem \ref{Thineq}.}  Taking into account the expansions \eqref{exp-s} and \eqref{exp-P}, we have as in \eqref{exp-sP}: \begin{eqnarray}
   \sigma_\e(h) P_\e(h) & = & \sigma_\e P_\e^0 + \big( \sigma_\e P_\e^1(h) + \mathscr E(S_\e h) P_\e^0 \big) + o_{L^p(\Omega)^{3 \times 3}}(h). \label{exp-sP'}
\end{eqnarray} By virtue of Remark \ref{Remconv}, using the properties \eqref{def-corr}-\eqref{conv-P} satisfied by the corrector $P_\e(h)$ in the expansions \eqref{exp-P}, \eqref{exp-sP} and \eqref{exp-cond*}, we have, like in \eqref{conv3} and \eqref{div2}:\begin{equation} \; \left\{\!\! \begin{array}{r c l}
   \vspace{0.2cm}P_\e^0 & \harpoon & I_3 \quad \quad \quad \ \ \text{weakly in } L^{p}(\Omega)^{3 \times 3},\\
\vspace{0.2cm}P_\e^1(h) & \harpoon & 0 \quad \quad \quad \ \ \ \text{weakly in } L^{p}(\Omega)^{3 \times 3},\\
\sigma_\e P_\e^1(h) + \mathscr E(S_\e h) P_\e^0 & \harpoon & \mathscr E(S_*h) \quad \ \text{weakly in } L^{p}(\Omega)^{3 \times 3}.
\end{array}  \right. \;\;\;\label{conv3'}\end{equation}
 
 Let $\lambda \in \R^3$. We apply Lemma \ref{divcurl} with $A_\e:=\sigma_\e$, $\xi_\e:=\big(\sigma_\e P_\e^1(h) + \mathscr E(S_\e h) P_\e^0 \big) \lambda$, $\xi:= \mathscr E(S_*h)\lambda$ and \eq \zeta:= \lim \limits_{w-L^1(\Omega)} \big( \mathscr E(S_\e h) P_\e^0 + \sigma_\e P_\e^1(h)\big)^\text{T} \sigma_\e^{-1} \big( \mathscr E(S_\e h) P_\e^0 + \sigma_\e P_\e^1(h)\big)\lambda \cdot \lambda. \qe It follows that for any $\lambda \in \R^3$, \eq \lim \limits_{w-L^1(\Omega)} \big( \mathscr E(S_\e h) P_\e^0 + \sigma_\e P_\e^1(h)\big)^\text{T} \sigma_\e^{-1} \big( \mathscr E(S_\e h) P_\e^0 + \sigma_\e P_\e^1(h)\big) \lambda \cdot \lambda \geq \mathscr E(S_* h)^\text{T} \sigma_*^{-1} \mathscr E(S_* h) \lambda \cdot \lambda. \label{tty1'}\qe Using the fact that $P_\e^1(h)$ is a gradient and \eqref{eqinter'}, \eqref{tty1'} is an equality if and only if \eq \mathrm{Curl} \ \! \big( A_\e^{-1} \xi_\e \big) = \mathrm{Curl} \ \! \big( \sigma_\e^{-1} \mathscr E(S_\e h)  P_\e^0 \lambda \big) = - \, \mathrm{Curl} \ \! \big( \mathscr E(R_\e h) \sigma_\e P_\e^0 \lambda\big) \label{tty2'}\qe lies in a compact subset of $H_{\text{loc}}^{-1}(\Omega)^{3\times 3}$. Due to the arbitrariness of $\lambda$, this can be rewritten \eq \lim \limits_{w-L^1(\Omega)} \big( \mathscr E(S_\e h) P_\e^0 + \sigma_\e P_\e^1(h)\big)^\text{T} \sigma_\e^{-1} \big( \mathscr E(S_\e h) P_\e^0 + \sigma_\e P_\e^1(h)\big) \geq \mathscr E(S_* h)^\text{T} \sigma_*^{-1} \mathscr E(S_* h), \label{tty1}\qe which is an equality if and only if\eq \mathrm{Curl} \ \! \big( \mathscr E(R_\e h) \sigma_\e P_\e^0 \big) \label{tty2} \quad \text{lies in a compact subset of } H_{\text{loc}}^{-1}(\Omega)^{3\times 3 \times 3}. \qe We conclude to \eqref{ineq} by \eqref{Thconv1} and \eqref{tty1}, and to \eqref{condRot} by \eqref{tty2}. \qed

\bigskip

\noindent \textit{Proof of Lemma \ref{divcurl}.} 

\smallskip

\noindent \textit{Proof of inequality \eqref{lem-ineq}:} Let $\varphi$ be a non-negative function in $\mathscr C_c^0(\Omega)$. Let $\delta >0$, and for $i=1,\ldots,k$, let $\lambda_i\in \R^d$ and let $\omega_i$ be balls in $\Omega$ such that \eq \supp \varphi \subset \bigcup_{i = 1}^k \omega_i \quad \text{and} \quad \Sum_{i = 1}^k \int_{\omega_i} |A_*^{-1}\xi - \lambda_i|^2 \dx \leq \delta. \label{approxxi}\qe  We consider a partition of unity $(\psi_i)_{1\leq i \leq k}$ such that \eq \forall \ i=1,\ldots,k, \quad \psi_i \in \mathscr C_c^\infty(\omega_i), \quad 1\geq \psi_i \geq 0, \quad \Sum_{i = 1}^k \psi_i \equiv 1 \text{ in } \supp \varphi, \label{defpartunity}\qe and a sequence of functions $(\widetilde{\psi}_i)_{1\leq i \leq k}$ such that \eq \forall \ i=1,\ldots,k, \quad \widetilde{\psi}_i \in \mathscr C_c^\infty(\omega_i), \quad \widetilde{\psi}_i \equiv 1 \text{ in } \supp \psi_i.\label{psitilde}\qe For $i=1,\ldots,k$, let $ v_\e^i$ be the unique solution of the problem\begin{equation} \; \left\{\!\! \begin{array}{r c l l}
\vspace{0.2cm}\div \big(A_\e \nabla v_\e^i \big) & = & \div \big(A_* \nabla ( \widetilde{\psi}_i \lambda_i \cdot x) \big)  &\text{in } \mathscr D'(\Omega) \\
\displaystyle v_\e^i  & = & 0 &\text{on } \partial \Omega.
\end{array} \right. \;\;\;\label{def-eta}\end{equation} Thanks to the $H$-convergence of $A_\e$ (see Definition \ref{Hconv}) we have the convergence\begin{equation} \; \forall \ i=1,\ldots,k, \quad \left\{\!\! \begin{array}{r c l l}
\vspace{0.2cm}v_\e^i & \harpoon  & v^i = \widetilde{\psi}_i \lambda_i \cdot x \quad &\text{weakly in } H_0^1(\Omega), \\
\nabla v^i  & \equiv & \lambda_i \quad & \text{in } \supp \psi_i.
\end{array} \right. \;\;\;\label{convfuncinter}\end{equation} More generally, for any $\lambda \in \R^d$, we consider the unique solution $ v_\e^\lambda$ of the problem\begin{equation} \; \left\{\!\! \begin{array}{r c l l}
\vspace{0.2cm}\div \big(A_\e \nabla v_\e \big)  & = & \div \big(A_*\lambda \big) & \text{in } \mathscr D'(\Omega), \\
\displaystyle v_\e & = & \lambda \cdot x & \text{on } \partial \Omega.
\end{array} \right. \;\;\;\label{def-eta'}\end{equation} Again, by the $H$-convergence of $A_\e$, we have the convergences\begin{equation} \; \quad \left\{\!\! \begin{array}{r c l l}
\vspace{0.2cm}v_\e & \harpoon &\lambda \cdot x \quad &\text{weakly in } H_0^1(\Omega), \\
A_\e \nabla v_\e &\harpoon &A_* \lambda \quad &\text{weakly in } L^2(\Omega)^d.
\end{array} \right. \;\;\;\label{convfuncinter2}
\end{equation}

We have by \eqref{defpartunity} \eq \int_\Omega \zeta \varphi \dx = \lim \limits_{\e \to 0} \int_\Omega A_\e^{-1} \xi_\e \cdot \xi_\e \ \varphi \dx = \Sum_{i=1}^k \lim \limits_{\e \to 0} \int_{\omega_i} A_\e^{-1} \xi_\e \cdot \xi_\e   \ \psi_i \ \varphi \dx. \qe Combining this inequality with, for $i=1,\ldots,k$,  \eq A_\e^{-1} \xi_\e \cdot \xi_\e - 2\xi_\e \cdot \nabla v_\e^i + A_\e \nabla v_\e^i \cdot \nabla v_\e^i = A_\e^{-1} \big( \xi_\e - A_\e \nabla v_\e^i \big) \cdot \big( \xi_\e - A_\e \nabla v_\e^i \big) \geq 0, \qe we obtain that \eq \int_\Omega \zeta \varphi \dx  \geq \Sum_{i=1}^k \liminf \limits_{\e \to 0} \int_{\omega_i} \big( 2\xi_\e \cdot \nabla v_\e^i - A_\e \nabla v_\e^i \cdot \nabla v_\e^i \big)   \psi_i \varphi \dx. \label{ineqapprox}\qe

By \eqref{def-eta}, \eqref{prop-xi} and \eqref{convfuncinter}, and by the classical div-curl lemma of \cite{MurComp,tpeccot} we have \eq \xi_\e \cdot \nabla v_\e^i \harpoon \xi \cdot \nabla v^i = \xi \cdot \lambda_i \quad \text{and} \quad A_\e \nabla v_\e^i \cdot \nabla v_\e^i\harpoon A_* \nabla v^i \cdot \nabla v^i = A_* \lambda_i \cdot \lambda_i\label{div-curlapp}\qe weakly-$*$ in $\mathcal M(\omega_i)$. This combined with \eqref{ineqapprox} and \eqref{defpartunity} yields \begin{align} \int_\Omega \zeta \varphi \dx  & \geq \Sum_{i=1}^k \int_{\omega_i} \big( 2\xi \cdot \lambda_i - A_* \lambda_i \cdot \lambda_i \big)   \ \psi_i \ \varphi \dx \nonumber \\ &= \Sum_{i=1}^k \int_{\omega_i} A_*^{-1} \xi \cdot \xi \ \psi_i \ \varphi \dx - \Sum_{i=1}^k \int_{\omega_i} A_* \big( A_*^{-1} \xi -  \lambda_i \big) \cdot \big( A_*^{-1}\xi - \lambda_i \big) \ \psi_i \ \varphi \dx \nonumber \\ &= \int_{\Omega} A_*^{-1} \xi \cdot \xi \ \varphi \dx - \Sum_{i=1}^k \int_{\omega_i} A_* \big( A_*^{-1} \xi -  \lambda_i \big) \cdot \big( A_*^{-1}\xi - \lambda_i \big) \  \psi_i \ \varphi \dx. \label{grosseineq}\end{align}  Moreover, by the Cauchy-Schwarz inequality and $A_* \in \mathcal M(\alpha,\beta;\Omega)$, we have for $i=1,\ldots,k$, \eq \int_{\omega_i} A_* \big( A_*^{-1} \xi -  \lambda_i \big) \cdot \big( A_*^{-1}\xi - \lambda_i \big) \ \psi_i \ \varphi \dx \leq \beta \ \Vert \varphi \Vert_\infty \int_{\omega_i}  |A_*^{-1}\xi - \lambda_i|^2 \dx.\qe Summing these inequalities on $i$ together with \eqref{grosseineq} and \eqref{approxxi}, we finally get that \eq \int_\Omega \zeta \varphi \dx \geq  \int_{\Omega} A_*^{-1} \xi \cdot \xi \varphi \dx -\beta \ \delta \ \Vert \varphi \Vert_\infty.\qe We conclude to \eqref{ineq} since $\delta$ is arbitrary.

\bigskip

\noindent \textit{Proof of the case of equality:} Let us now prove that the equality in \eqref{lem-ineq} implies \eqref{rotcompact}. Consider a compact subset $K$ of $\Omega$, and a sequence $\Phi_\e \in H_0^1(\Omega)^{d \times d}$ such that\eq \Phi_\e \harpoon 0 \text{ weakly in } H_0^1(\Omega)^{d \times d} \quad \text{and} \quad \supp \Phi_\e \subset K \label{defPhi}.\qe By the definitions of the curl and the divergence, and by \eqref{defPhi} we have \eq \big\langle \curl\big(A_\e^{-1} \xi_\e \big), \Phi_\e \big\rangle_{H^{-1}(\Omega)^{d \times d}, H_0^1(\Omega)^{d \times d}} = \int_K  A_\e^{-1} \xi_\e \cdot \Div\big( \Phi_\e - \Phi_\e^\T \big) \dx. \label{intpart1}\qe

Consider a partition of unity $(\psi_i)_{1\leq i \leq k}$ such that \eq \forall \ i=1,\ldots,k, \quad \psi_i \in \mathscr C_c^\infty(\omega_i), \quad 0\leq \psi_i \leq 1, \quad \Sum_{i = 1}^k \psi_i \equiv 1 \text{ in } K, \label{defpartunity1}\qe the functions $\widetilde{\psi}_i$ defined by \eqref{psitilde}, and the function $v_\e^i$ by \eqref{def-eta}. We decompose the equality \eqref{intpart1} in two parts \eq \big \langle \curl\big(A_\e^{-1} \xi_\e \big), \Phi_\e \big \rangle_{H^{-1}(\Omega)^{d \times d}, H_0^1(\Omega)^{d \times d}} = I_\e + J_\e,\label{decomp}\qe where \eq I_\e := \Sum_{i = 1}^k \int_{\omega_i} \big( A_\e^{-1} \xi_\e - \nabla v_\e^i\big) \cdot \Div\big( \Phi_\e - \Phi_\e^\T \big) \ \psi_i \dx, \qe and \eq J_\e := \Sum_{i = 1}^k \int_{\omega_i} \nabla v_\e^i \cdot \Div\big( \Phi_\e - \Phi_\e^\T \big) \ \psi_i \dx.\qe 

On the one hand, by the Cauchy-Schwarz inequality we have \eq |I_\e|^2 \leq \left(\Sum_{i = 1}^k \int_{\omega_i} |A_\e^{-1}\xi_\e - \nabla v_\e^i|^2 \psi_i \dx\right) \! \left(\Sum_{i = 1}^k \int_{\omega_i} \left| \Div\big( \Phi_\e - \Phi_\e^\T \big)  \right|^2 \psi_i\dx \right),\qe that is \eq |I_\e|^2 \leq \big \Vert \Div\big( \Phi_\e - \Phi_\e^\T \big) \big \Vert^2_{L^2(\Omega)^{d}} \ \Sum_{i = 1}^k \int_{\omega_i} |A_\e^{-1}\xi_\e - \nabla v_\e^i|^2 \ \psi_i \dx. \label{I}\qe  Using successively \eqref{lem-ineq}, \eqref{div-curlapp} and \eqref{lem-ineq'}, we get that \begin{align}
 &\hspace{-0.45cm}\limsup \limits_{\e \to 0}\Sum_{i = 1}^k \int_{\omega_i} \big |A_\e^{-1}\xi_\e - \nabla v_\e^i \big|^2 \ \psi_i \dx \nonumber \\
  &\hspace{-0.45cm}= \limsup \limits_{\e \to 0}\Sum_{i=1}^k\int_{\omega_i} \Big(A_\e^{-1} \xi_\e \cdot \xi_\e - 2\xi_\e \cdot \nabla v_\e^i + A_\e \nabla v_\e^i \cdot \nabla v_\e^i \Big) \  \psi_i\dx \nonumber\\
&\hspace{-0.45cm}= \Sum_{i=1}^k\int_{\omega_i} \Big(A_*^{-1} \xi \cdot \xi - 2\xi \cdot \nabla v^i + A_* \nabla v^i \cdot \nabla v^i \Big) \psi_i\dx \nonumber \\
&\hspace{-0.45cm}= \Sum_{i=1}^k\int_{\omega_i}  \big[A_* \big(A_*^{-1} \xi - \lambda_i \big) \cdot \big(A_*^{-1} \xi - \lambda_i \big)\big] \psi_i\dx \leq \beta \Sum_{i = 1}^k \int_{\omega_i} |A_*^{-1}\xi - \lambda_i|^2 \dx \leq \beta \delta .
\end{align}  This combined with \eqref{defPhi} and \eqref{I} implies that \eq |I_\e| = O( \sqrt{\delta}).\label{I1}\qe

On the other hand, an integration by parts gives \begin{align} J_\e &= \Sum_{i = 1}^k \int_{\omega_i} \nabla v_\e^i \cdot \Div\big( \Phi_\e - \Phi_\e^\T \big) \psi_i \dx \nonumber \\
&=\Sum_{i = 1}^k \int_{\omega_i} \nabla v_\e^i \cdot \Div\big( \psi_i (\Phi_\e - \Phi_\e^\T) \big)  \dx - \Sum_{i = 1}^k \int_{\omega_i} \nabla v_\e^i \cdot \big( \Phi_\e^\T - \Phi_\e \big)\nabla \psi_i  \dx. \end{align} Since $\psi_i (\Phi_\e - \Phi_\e^\T)$ is an antisymmetric matrix, we have for any $i=1,\ldots,k$,\eq \int_{\omega_i} \nabla v_\e^i \cdot \Div\big( \psi_i (\Phi_\e - \Phi_\e^\T) \big) \dx = - \Sum_{k,l=1}^d \left \langle \frac{\partial^2 v_\e^i}{ \partial x_k \partial x_l } \, ,  \, \psi_i (\Phi_\e - \Phi_\e^\T)_{k,l} \right \rangle_{\mathscr C_c^\infty(\omega_i),\mathscr D(\omega_i)} =  0, \qe hence \eq J_\e = \Sum_{i = 1}^k \int_{\omega_i} \nabla v_\e^i \cdot \big( \Phi_\e - \Phi_\e^\T \big)\nabla \psi_i  \dx.\qe The Cauchy-Schwarz inequality gives \eq |J_\e|^2 \leq \Vert  \Phi_\e - \Phi_\e^\T \Vert^2_{L^2(\Omega)^{d\times d}} \ \sup \limits_{1 \leq i \leq k} \Vert \nabla \psi_i\Vert_\infty^2 \ \Sum_{i = 1}^k \Vert \nabla v_\e^i\Vert_{L^2(\omega_i)}^2 .\qe Then, since $k$ and $(\psi_i)_{1\leq i \leq k}$ are independent of $\e$, there exists $C_\delta >0$ such that \eq |J_\e| \leq C_\delta \Vert  \Phi_\e - \Phi_\e^\T \Vert_{L^2(\Omega)^{d\times d}}.\label{J1}\qe Combining \eqref{I1} and \eqref{J1} with \eqref{decomp}, we have, for any $\delta >0$, \eq \left|\big\langle \curl\big(A_\e^{-1} \xi_\e \big), \Phi_\e \big\rangle_{H^{-1}(\Omega)^{d \times d}, H_0^1(\Omega)^{d \times d}}\right| \leq C_\delta \ \Vert  \Phi_\e - \Phi_\e^\T \Vert_{L^2(\Omega)^{d\times d}} + O (\sqrt{\delta}). \qe Moreover, by the definition of $\Phi_\e$ in \eqref{defPhi} and Rellich's theorem, $\Phi_\e - \Phi_\e^\T$ converges strongly to $0$ in ${L^2(\Omega)^{d\times d}}$. Therefore, we get that for any $\delta >0$, \eq \limsup \limits_{\e \to 0} \left|\big\langle \curl\big(A_\e^{-1} \xi_\e \big), \Phi_\e \big\rangle_{H^{-1}(\Omega)^{d \times d}, H_0^1(\Omega)^{d \times d}}\right| \leq C \sqrt{\delta}, \qe which implies \eqref{rotcompact} due to the arbitrariness of $\delta > 0$ and \eqref{defPhi}.

\bigskip

Finally, let us prove that \eqref{rotcompact} implies the equality in \eqref{lem-ineq}. As $|A_\e^{-1}\xi_\e| \leq \beta |\xi_\e|$ is a bounded sequence in $L^2(\Omega)$, the following convergence holds up to a subsequence \eq \eta_\e:=A_\e^{-1} \xi_\e \harpoon \eta \quad \text{weakly in } L^2(\Omega)^d. \qe  By the div-curl lemma and \eqref{rotcompact}, we have \eq \eta_\e \cdot \xi_\e = A_\e^{-1} \xi_\e \cdot \xi_\e \harpoon \eta \cdot \xi \quad \text{in } \mathscr D'(\Omega) \label{conv01},\qe\eq \eta_\e \cdot A_\e \nabla v_\e^\lambda \harpoon \eta \cdot A_* \lambda \quad \text{in } \mathscr D'(\Omega)\label{conv03}.\qe Moreover, since $A_\e$ is symmetric, we have \eq \eta_\e \cdot A_\e \nabla v_\e^\lambda = \xi_\e \cdot \nabla v_\e^\lambda \harpoon \xi \cdot \lambda \quad \text{in } \mathscr D'(\Omega).\label{conv02}\qe From \eqref{conv03} and \eqref{conv02}, we deduce that for any $\lambda \in \R^d$, \eq \eta \cdot A_* \lambda = A_* \eta \cdot \xi = \xi \cdot \lambda \quad \text{a.e. in } \Omega,\qe which implies that \eq \eta = A_*^{-1}\xi \quad \text{a.e. in } \Omega.\label{lastone}\qe We conclude to the equality in \eqref{lem-ineq} combining \eqref{lastone} with \eqref{conv01}. \qed

\subsection{Higher-order terms}

In this section, we try to extend the inequality \eqref{ineq} when the magneto-resistance is replaced by any term of even-order in the expansion of the perturbed resistivity. We first establish an inequality (opposite to \eqref{ineq}) satisfied by the fourth-order term of the resistivity assuming that the Hall matrix is zero. Then, we prove that the positivity is not conserved for even-orders greater than two.

For the sake of simplicity, we lighten the notation of Remark \ref{Remconv}: for any functional space $H$, any integer $k$, any $k$-linear form $g^k:  \left(\R^3\right)^k \rightarrow H$ and any $h \in \R^3$, the $k^{\text{th}}$-order $g^k(h, \ldots,h)$ is symply denoted $g^k(h)$.
\label{exp-p1ext}
Let $n \geq 4$ be an integer. Assume that the conductivity satisfies the regularity condition \eqref{cond-reg} for any multi-index $|\nu | \leq  n$. As a consequence, the conductivity $\sigma_\e(h)$, the resistivity $\rho_\e(h) = \sigma_\e(h)^{-1}$ and the associated homogenized quantities $\sigma_*(h)$, $\rho_*(h)$ satisfy the $n^{\text{th}}$-order expansions in $h$\eq \hspace{-0.25cm} \left \{\begin{array}{l l} \vspace{0.3cm} \sigma_\e(h) =  \sigma_\e + \cdots + \sigma_\e^n(h) + o_{L^\infty(\Omega)^{3 \times 3}}(|h|^n), & \rho_\e(h) = \rho_\e + \cdots + \rho_\e^n(h ) + o_{L^\infty(\Omega)^{3 \times 3}}(|h|^n),  \\
\sigma_*(h) = \sigma_* + \cdots + \sigma_*^n(h) + o_{L^\infty(\Omega)^{3 \times 3}}(|h|^n), & \rho_*(h) = \rho_* + \cdots + \rho_*^n(h ) + o_{L^\infty(\Omega)^{3 \times 3}}(|h|^n),\end{array} \label{expext}\right.\qe where for any $h \in \R^3$, the matrices $\sigma_{\e / *}^k (h)$, $\rho_{\e / *}^k(h)$ are symmetric for even $k$ and antisymmetric for odd $k$. Note that with the notations of Section \ref{sect1} we have \eq \sigma_{\e / *}^1(h) = \mathscr E(S_{\e / *} h), \quad \rho_{\e / *}^1(h) = \mathscr E(R_{\e / *} h), \quad \sigma_{\e / *}^2(h) = \mathscr N_{\e / *}(h,h), \quad \rho_{\e / *}^2(h) = \mathscr M_{\e / *}(h,h).\label{hallext1}\qe Similarly to \eqref{exp-U} and \eqref{exp-P}, by the above regularity condition, the coercivity of $\sigma_\e(h)$ and the Meyers estimate \cite{Mey}, the potential $U_\e(h)$ and the corrector $P_\e(h)$ admit the following $n^{\text{th}}$-order expansions in $h$,\begin{align} & U_\e(h) = U_\e^0 + U_\e^1(h) + \cdots + U_\e^n(h) + o_{W^{1,p}(\Omega)^3}(|h|^n),\label{exp-Uext} \\ & P_\e(h) = P_\e^0 + P_\e^1(h) +\cdots + P_\e^n(h) + o_{L^p(\Omega)^{3 \times 3}}(|h|^n).\label{exp-Pext}\end{align}

\subsubsection{Fourth-order term with zero Hall matrix}

 \noindent We can now state the following result for the fourth-order term:

\begin{Prop} Assume that \eqref{cond-sym} and \eqref{cond-reg} for $|\nu| \leq 4$ are satisfied and that the norms of $\sigma_\e^2$, $\sigma_\e^3$ and $\sigma_\e^4$ are bounded in $L^\infty(\Omega)$. Then, in the absence of Hall effect (\textit{i.e.} $\rho_\e^1 = 0$), we have, for any $h \in O$, \eq
   \sigma_* \, \mathscr \rho_*^4(h) \, \sigma_* \leq \lim \limits_{w-L^1(\Omega)} \big( \sigma_\e P_\e^0\big)^\text{T} \mathscr \rho_\e^4(h) \big( \sigma_\e P_\e^0\big). \label{ineq4}\qe Moreover, \eqref{ineq4} is an equality if and only if \eq \mathrm{Curl} \left( \rho_\e^2(h) \sigma_\e P_\e^0 \right) \text{ lies in a compact subset of } H^{-1}(\Omega)^{3 \times 3 \times 3}. \label{condRot4}\qe
\label{4}
\end{Prop}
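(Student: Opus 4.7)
The strategy follows the template of Theorem~\ref{Thineq}, with the role of the first-order corrector played by a second-order object. First, from $\rho_\e^1 = 0$ and $\sigma_\e(h)\rho_\e(h) = I_3$ expanded at order $h$, one obtains $\sigma_\e^1 = 0$; the order-$h$ expansion of \eqref{def-corr} then gives $\Div(\sigma_\e DU_\e^1(h)) = 0$ with zero boundary datum, whence $P_\e^1(h) \equiv 0$ by coercivity. Inverting $\sigma_\e(h)$ at order $h^4$ with the simplifications $\rho_\e^1 = \sigma_\e^1 = 0$ yields the algebraic identity
\eq \sigma_\e \, \rho_\e^4(h) \, \sigma_\e = \sigma_\e^2(h) \, \sigma_\e^{-1} \, \sigma_\e^2(h) - \sigma_\e^4(h), \label{plan-alg}\qe
and the identical formula for $\sigma_* \, \rho_*^4(h) \, \sigma_*$. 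The minus sign in front of $\sigma_\e^4(h)$ is precisely what reverses the inequality direction compared to \eqref{ineq}.

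Next, I introduce the second-order counterpart of the quantity $\mathscr{E}(S_\e h) P_\e^0 + \sigma_\e P_\e^1(h)$ used in Theorem~\ref{Thineq},
\eq \Xi_\e(h) \; := \; \sigma_\e^2(h) \, P_\e^0 + \sigma_\e \, P_\e^2(h). \qe
Extracting the $h^2$-term from $\Div(\sigma_\e(h) P_\e(h)) = \Div(\sigma_*(h))$ gives $\Div(\Xi_\e(h)) = \Div(\sigma_*^2(h))$, constant in $\e$, while $\Xi_\e(h) \rightharpoonup \sigma_*^2(h)$ weakly. Lemma~\ref{divcurl} applied with $A_\e = \sigma_\e$ and $\xi_\e = \Xi_\e(h) \lambda$ for arbitrary $\lambda \in \R^3$ then produces
\eq \lim \limits_{w-L^1(\Omega)} \Xi_\e(h)^\T \, \sigma_\e^{-1} \, \Xi_\e(h) \; \geq \; \sigma_*^2(h) \, \sigma_*^{-1} \, \sigma_*^2(h), \qe
with equality if and only if $\mathrm{Curl}(\sigma_\e^{-1} \Xi_\e(h))$ is relatively compact in $H^{-1}$. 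Since $P_\e^2(h)$ is a gradient and $\rho_\e \sigma_\e^2(h) = - \rho_\e^2(h) \sigma_\e$ by \eqref{plan-alg} at order $h^2$, this last condition is exactly \eqref{condRot4}.

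The delicate step is expanding the fundamental weak convergence $P_\e(h)^\T \sigma_\e(h) P_\e(h) \rightharpoonup \sigma_*(h)$ at order $h^4$. After killing products containing $P_\e^1 = \sigma_\e^1 = 0$ there remain six nonzero terms; three vanish in the limit via the div-curl lemma, namely the pair $(P_\e^0)^\T \sigma_\e P_\e^4(h) + (P_\e^4(h))^\T \sigma_\e P_\e^0$ (because $\sigma_\e P_\e^0$ has fixed divergence $\Div(\sigma_*)$ while $P_\e^4(h)$ is curl-free), together with $\Xi_\e(h)^\T P_\e^2(h) \rightharpoonup 0$ and $P_\e^2(h)^\T \Xi_\e(h) \rightharpoonup 0$. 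These cancellations yield
\eq \sigma_*^4(h) \; = \; \lim \limits_{w-L^1(\Omega)} (P_\e^0)^\T \, \sigma_\e^4(h) \, P_\e^0 \; - \; \lim \limits_{w-L^1(\Omega)} P_\e^2(h)^\T \, \sigma_\e \, P_\e^2(h). \qe
Applying the same cancellations to the four-term expansion of $\Xi_\e(h)^\T \sigma_\e^{-1} \Xi_\e(h)$ gives
\eq \lim \limits_{w-L^1(\Omega)} \Xi_\e(h)^\T \sigma_\e^{-1} \Xi_\e(h) \; = \; \lim \limits_{w-L^1(\Omega)} (P_\e^0)^\T \sigma_\e^2(h) \sigma_\e^{-1} \sigma_\e^2(h) P_\e^0 \; - \; \lim \limits_{w-L^1(\Omega)} P_\e^2(h)^\T \sigma_\e P_\e^2(h). \qe
Substituting these two formulas into \eqref{plan-alg} and into its homogenized counterpart converts the Lemma~\ref{divcurl} inequality into exactly \eqref{ineq4}.

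The main obstacle is the combinatorial bookkeeping in the last step: one must carefully list the order-$4$ contributions to $P_\e(h)^\T \sigma_\e(h) P_\e(h)$ and identify the three div-curl groupings that collapse the weak limit to the announced two-term formula for $\sigma_*^4(h)$. Once these cancellations are isolated, the inequality and the characterisation of equality follow by a direct transcription of the scheme of Theorem~\ref{Thineq}, with the reversed sign of $\sigma_\e^4(h)$ in \eqref{plan-alg} producing the reverse inequality \eqref{ineq4}.
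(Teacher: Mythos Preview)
Your argument is correct and follows the same overall route as the paper: the algebraic identity \eqref{plan-alg}, the introduction of $\Xi_\e(h)=\sigma_\e^2(h)P_\e^0+\sigma_\e P_\e^2(h)$, and the application of Lemma~\ref{divcurl} to $\Xi_\e(h)\lambda$ are exactly the paper's ingredients, and your final subtraction reproduces the paper's identity $\sigma_*\rho_*^4\sigma_*-\lim(\sigma_\e P_\e^0)^\T\rho_\e^4(\sigma_\e P_\e^0)=\sigma_*^2\sigma_*^{-1}\sigma_*^2-\lim\Xi_\e^\T\sigma_\e^{-1}\Xi_\e$.

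Two remarks on the bookkeeping. First, your claim $P_\e^1(h)\equiv 0$ is stronger and cleaner than what the paper proves (there one only shows $P_\e^1(h)\to 0$ strongly in $L^p$ via a div--curl energy argument), but to conclude $U_\e^1(h)=0$ from \eqref{def-corr} you also need $\sigma_*^1(h)=0$ on the right-hand side; this follows from $\sigma_\e^1=0$ together with \eqref{Thconv}, and you should say so. Second, your ``three cancellations'' at order four are correct, but note that the combinations $\Xi_\e^\T P_\e^2$ and $(P_\e^2)^\T\Xi_\e$ each contain the term $(P_\e^2)^\T\sigma_\e P_\e^2$, so the algebra leading to $\sigma_*^4=\lim(P_\e^0)^\T\sigma_\e^4 P_\e^0-\lim(P_\e^2)^\T\sigma_\e P_\e^2$ involves subtracting that term back once; the paper instead pairs $(P_\e^0)^\T$ against the divergence-controlled fourth-order slice of $\sigma_\e(h)P_\e(h)$ to get the equivalent formula $\sigma_*^4=\lim\big[(P_\e^0)^\T\sigma_\e^2 P_\e^2+(P_\e^0)^\T\sigma_\e^4 P_\e^0\big]$. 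Both organisations lead to the same conclusion.
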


\begin{proof} The proof follows the framework of Section \ref{sect1} and \ref{sect2}. We first establish like in Theorem \ref{limites} a new expression of the difference of the two terms of \eqref{ineq4} through relations similar to Proposition~\ref{relations}. We then apply Lemma \ref{divcurl} to this new expression.

Let $h \in O$. As $\sigma_\e^1 = 0$, by Proposition \ref{relations} and \eqref{hallext1}, we have \eq \sigma_*^1 = \rho_\e^1 = \rho_*^1 = 0.\label{nullHall}\qe Considering the expansion at the fourth-order of $\sigma_\e(h) \rho_\e(h) = I_3$, we obtain similarly to the proof of Proposition \ref{relations}, for any $h \in O$, \eq \sigma_\e \, \rho_\e^4(h) + \sigma_\e^1(h) \, \rho_\e^3(h) + \sigma_\e^2(h) \, \rho_\e^2(h) + \sigma_\e^3(h) \, \rho_\e^1(h) + \sigma_\e^4(h) \, \rho_\e = 0, \qe which gives, by \eqref{nullHall}, \eq \sigma_\e \, \rho_\e^4(h) \, \sigma_\e = - \, \sigma_\e^2(h) \, \rho_\e^2(h) \, \sigma_\e - \sigma_\e^4(h).\label{eq4'}\qe Using again \eqref{nullHall} with \eqref{hallext1}, \eqref{relaN} can be rewritten, for any $h \in O$, \eq \sigma_\e^2(h) = \sigma_\e \, \rho_\e^2(h) \, \sigma_\e. \label{relaN2}\qe Combining \eqref{eq4'} with \eqref{relaN2}, we obtain \eq \sigma_\e \, \rho_\e^4(h) \, \sigma_\e = \sigma_\e^2(h) \, \sigma_\e^{-1} \, \sigma_\e^2(h) - \sigma_\e^4(h).\label{eq4''}\qe Similarly, we have for the homogenized quantities \eq \sigma_* \, \rho_*^4(h) \, \sigma_* = \sigma_*^2(h) \, \sigma_*^{-1} \, \sigma_*^2(h) - \sigma_*^4(h).\label{eq4''*}\qe

Taking into account the expansions \eqref{expext} and \eqref{exp-Pext}, we have (writing only the second and the fourth-order terms) like in \eqref{exp-sP} \eq \begin{array}{l}
   \vspace{0.2cm}\sigma_\e(h) P_\e(h)  =  \cdots + \big( \sigma_\e P_\e^2(h) + \sigma_\e^1 P_\e^1(h) + \sigma_\e^2(h) P_\e^0 \big) + \cdots  \\
    + \, \big( \sigma_\e P_\e^4(h) + \sigma_\e^1(h) P_\e^3(h) +\sigma_\e^2(h) P_\e^2(h) +\sigma_\e^3(h) P_\e^1(h) + \sigma_\e^4(h) P_\e^0\big) + o_{L^p(\Omega)^{3 \times 3}}(|h|^4), \label{exp-sPext'} 
\end{array} \qe that is by \eqref{nullHall} \begin{eqnarray}
   \sigma_\e(h) P_\e(h) & = & \cdots + \big( \sigma_\e P_\e^2(h) + \sigma_\e^2(h) P_\e^0 \big) + \cdots  \nonumber \\
   & & \hspace{-0.65cm}+ \, \big( \sigma_\e P_\e^4(h) +\sigma_\e^2(h) P_\e^2(h) +\sigma_\e^3(h) P_\e^1(h) + \sigma_\e^4(h) P_\e^0\big) + o_{L^p(\Omega)^{3 \times 3}}(|h|^4). \label{exp-sPext} 
\end{eqnarray}  By virtue of Remark \ref{Remconv}, using the properties \eqref{def-corr}-\eqref{conv-P} satisfied by the corrector $P_\e(h)$ in the expansions \eqref{exp-Pext}, \eqref{exp-sPext} and \eqref{expext}, we get that\begin{equation} \; \hspace{-0.3cm}\left\{\!\! \begin{array}{r c l l}
\vspace{0.2cm}P_\e^4(h) & \harpoon & 0  & \text{weakly in } L^{p}(\Omega)^{3 \times 3}, \\
\vspace{0.2cm}\sigma_\e P_\e^2(h) + \sigma_\e^2(h) P_\e^0 & \harpoon & \sigma_*^2(h)  & \text{weakly in } L^{p}(\Omega)^{3 \times 3}, \\
\sigma_\e P_\e^4(h) +\sigma_\e^2(h) P_\e^2(h) +\sigma_\e^3(h) P_\e^1(h) + \sigma_\e^4(h) P_\e^0 & \harpoon & \sigma_*^4(h)  & \text{weakly in } L^{p}(\Omega)^{3 \times 3}.
\end{array} \right.\;\;\;\label{conv3ext}\end{equation} and \begin{equation} \; \hspace{-0.26cm}\left\{\!\! \begin{array}{r c l}
 \vspace{0.2cm}\Div\big(\sigma_\e P_\e^2(h) + \sigma_\e^2(h) P_\e^0\big) & = & \Div\big(\sigma_*^2(h)\big),\\
\Div\big(\sigma_\e P_\e^4(h) +\sigma_\e^2(h) P_\e^2(h) +\sigma_\e^3(h) P_\e^1(h) + \sigma_\e^4(h) P_\e^0 \big) & = & \Div\big(\sigma_*^4(h)\big) 
\end{array} \quad \text{ in } \mathscr D'(\Omega)^{3 \times 3}. \right. \label{div2ext}\;\;\;\end{equation}Moreover, from $ \sigma_\e^1 = 0$, \eqref{nullHall} and \eqref{div2} $\sigma_\e  P_\e^1(h)$ is a divergence free function. Then the div-curl lemma implies that $ \big(\sigma_\e P_\e^1(h) \big)^\T P_\e^1(h)$ converges to $0$ in $L^{p/2}(\Omega)^{3 \times 3}$. This combined with $\sigma_\e \in \mathcal M(\alpha,\beta;\Omega)$, we get that \eq P_\e^1(h) \longrightarrow 0  \quad \text{strongly in } L^{p}(\Omega)^{3 \times 3}.\label{strongP1}\qe Taking into account \eqref{conv3} and \eqref{div2ext}, the div-curl lemma implies the convergence \eq \big(P_\e^0\big)^\T\big(\sigma_\e P_\e^4(h) +\sigma_\e^2(h) P_\e^2(h) +\sigma_\e^3(h) P_\e^1(h) + \sigma_\e^4(h) P_\e^0 \big) \harpoon \sigma_*^4(h)  \quad \text{weakly in } L^{p/2}(\Omega)^{3 \times 3}.\label{conv4}\qe Moreover by \eqref{conv3}, \eqref{conv3ext}, \eqref{div2} and the symmetry of $\sigma_\e$ the div-curl lemma yields \eq \big(P_\e^0\big)^\T\big(\sigma_\e P_\e^4(h)\big) = \big(\sigma_\e P_\e^0\big)^\T P_\e^4(h) \harpoon 0  \quad \text{weakly in } L^{p}(\Omega)^{3 \times 3}.\label{toto}\qe Hence, combining \eqref{strongP1} and \eqref{toto} in \eqref{conv4} we obtain \eq \sigma_*^4(h) = \lim \limits_{w-L^1(\Omega)} \Big[\big(P_\e^0\big)^\T\sigma_\e^2(h) P_\e^2(h) + \big(P_\e^0\big)^\T \sigma_\e^4(h) P_\e^0\Big].\label{conv4w}\qe Taking into account \eqref{Prop-P}, and \eqref{div2ext} the div-curl lemma implies that \eq \big(P_\e^2(h)\big)^\T \big(\sigma_\e P_\e^2(h) + \sigma_\e^2(h) P_\e^0 \big) \harpoon 0 \quad \text{ weakly in }L^{p/2}(\Omega)^{3 \times 3}, \qe hence \begin{align}
   & \lim \limits_{w-L^1(\Omega)} \big(\sigma_\e P_\e^2(h) + \sigma_\e^2(h) P_\e^0 \big)^\T  \sigma_\e^{-1} \big(\sigma_\e P_\e^2(h) + \sigma_\e^2(h) P_\e^0 \big) \nonumber \\ & =  \lim \limits_{w-L^1(\Omega)} \Big[ \big(P_\e^0 \big)^\T \sigma_\e^2(h) P_\e^2(h) + \big(P_\e^0 \big)^\T \sigma_\e^2(h) \sigma_\e^{-1} \sigma_\e^2(h)P_\e^0 \Big].  \label{convinterext}
\end{align} Combining the equalities \eqref{eq4''}, \eqref{eq4''*} with \eqref{conv4w}, \eqref{convinterext} and the symmetry of $\sigma_\e$ and $\sigma_\e^2(h)$, we obtain that \begin{align}
&\hspace{-0.6cm} \sigma_* \, \mathscr \rho_*^4(h) \, \sigma_*  - \lim \limits_{w-L^1(\Omega)} \big( \sigma_\e P_\e^0\big)^\text{T} \mathscr \rho_\e^4(h) \big( \sigma_\e P_\e^0\big) \nonumber\\
& \hspace{-0.8cm}= \sigma_*^2(h) \, (\sigma_*)^{-1} \, \sigma_*^2(h) - \sigma_*^4(h) -\lim \limits_{w-L^1(\Omega)} \Big[ \big(P_\e^0\big)^\T \sigma_\e^2(h) \sigma_\e^{-1} \sigma_\e^2(h)P_\e^0 - \big(P_\e^0\big)^\T \mathscr \sigma_\e^4(h)P_\e^0 \Big]\nonumber  \nonumber\\
& \hspace{-0.8cm}= \sigma_*^2(h) \, (\sigma_*)^{-1} \, \sigma_*^2(h) - \lim \limits_{w-L^1(\Omega)} \Big[ \big(P_\e^0 \big)^\T \sigma_\e^2(h) \sigma_\e^{-1} \sigma_\e^2(h)P_\e^0 + \big(P_\e^0 \big)^\T \sigma_\e^2(h) P_\e^2(h)\Big] \nonumber \nonumber\\
& \hspace{-0.8cm}= \sigma_*^2(h) \, (\sigma_*)^{-1} \, \sigma_*^2(h) - \lim \limits_{w-L^1(\Omega)} \big(\sigma_\e P_\e^2(h) + \sigma_\e^2(h) P_\e^0 \big)^\T  \sigma_\e^{-1} \big(\sigma_\e P_\e^2(h) + \sigma_\e^2(h) P_\e^0 \big).\label{diffext}
\end{align}
 
 \noindent Let $\lambda \in \R^3$. We apply Lemma \ref{divcurl} with $A_\e:=\sigma_\e$,  $\xi_\e:=\big(\sigma_\e P_\e^2(h) + \sigma_\e^2(h) P_\e^0 \big) \lambda$ which has a compact divergence by \eqref{div2ext} and converges weakly in $L^p(\Omega)^{3 \times 3}$ to $\xi:= \sigma_*^2(h)\lambda$ by \eqref{conv3ext}. Thus, with the notation of Lemma \ref{divcurl}, we have  \eq \zeta = \lim \limits_{w-L^1(\Omega)} \big( \sigma_\e^2(h) P_\e^0 + \sigma_\e P_\e^2(h)\big)^\text{T} \sigma_\e^{-1} \big( \sigma_\e^2(h) P_\e^0 + \sigma_\e P_\e^2(h)\big)\lambda \cdot \lambda. \qe As $\sigma_*^2(h)$ is symmetric, it follows that for any $\lambda \in \R^3$, \eq \lim \limits_{w-L^1(\Omega)} \big( \mathscr \sigma_\e^2(h) P_\e^0 + \sigma_\e P_\e^2(h)\big)^\text{T} \sigma_\e^{-1} \big( \sigma_\e^2(h) P_\e^0 + \sigma_\e P_\e^2(h)\big) \lambda \cdot \lambda \geq \sigma_*^2(h) \sigma_*^{-1} \sigma_*^2(h) \lambda \cdot \lambda. \label{tty1'ext}\qe Using the fact that $P_\e^2(h)$ is a gradient and the equality \eqref{relaN2}, \eqref{tty1'ext} is an equality if and only if \eq \mathrm{curl} \ \! \big( A_\e^{-1} \xi_\e \big) = \mathrm{curl} \ \! \big( \sigma_\e^{-1} \sigma_\e^2(h)  P_\e^0 \lambda \big) = - \, \mathrm{curl} \ \! \big( \rho_\e^2(h) \sigma_\e P_\e^0 \lambda\big) \label{tty2'ext}\qe lies in a compact subset of $H^{-1}(\Omega)^{3\times 3}$. This concludes the proof due to the arbitrariness of $\lambda$.  
\end{proof}

\subsubsection{An example with changes of sign}
 \label{counter}
 
 \noindent In this section we build a rank-one laminate which shows that the inequality \eqref{ineq} (or its inverse) cannot be extended to higher even-order terms.

Let $p \in \mathbb N^*$. Define the perturbed conductivity \eq \sigma(h):=\chi \, \sigma_1(h) + (1-\chi) \, \sigma_2(h),\label{lami}\qe where $\chi$ is a characteristic function. For $i=1,2$, the conductivities $\sigma_i(h)$ belong to $\mathscr M(\alpha,\beta;\Omega)$ and the resistivities $\rho_i(h) = \sigma_i(h)^{-1}$ satisfy the $n^{\text{th}}$-order expansions in $h$\eq  \vspace{0.3cm} \sigma_i(h) =  \sigma_i + \cdots + \sigma_i^{2p}(h) + o(|h|^{2p}), \quad \rho_i(h) = \rho_i + \cdots + \rho_i^{2p}(h ) + o(|h|^{2p}), \qe where for any $h \in \R^3$, the matrices $\sigma_i^k (h)$, $\rho_i^k(h)$ are symmetric for even $k$ and antisymmetric for odd $k$.

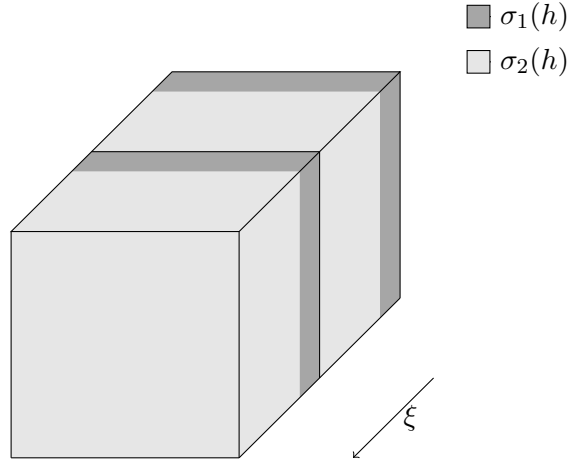
\begin{figure}[H]
\centering
\leavevmode
\begin{tikzpicture}[math3d, scale=3]
\fill[color= gray!70] (2,2,2) -- (2,2.1,2) -- (2,2.1,1.9) -- (2,2,1.9)-- cycle;
\draw (2,2,2) -- (2,2.1,2) -- (2,2.1,1.9) -- (2,2,1.9)-- cycle;
\draw[-] (2,2.1,1.95) -- (2,2.1,1.95)
node[midway, right]{$\sigma_1(h)$};
\fill[color= gray!20] (2,2,1.8) -- (2,2.1,1.8) -- (2,2.1,1.7) -- (2,2,1.7)-- cycle;
\draw (2,2,1.8) -- (2,2.1,1.8) -- (2,2.1,1.7) -- (2,2,1.7)-- cycle;
\draw[-] (2,2.1,1.75) -- (2,2.1,1.75)
node[midway, right]{$\sigma_2(h)$};
\coordinate (a2) at (2,0,0); 
\coordinate (b2) at (2,1,0);
\coordinate (c2) at (0,1,0);
\coordinate (d2) at (0,0,0);
\coordinate (a) at (1,0,0);
\coordinate (b) at (1,1,0);
\coordinate (c) at (0,1,0);
\coordinate (d) at (0,0,0);
\coordinate (e) at (1,0,1);
\coordinate (f) at (1,1,1);
\coordinate (g) at (0,1,1);
\coordinate (h) at (0,0,1);
\coordinate (a1) at (1,0,1);
\coordinate (b1) at (0.66,0,1);
\coordinate (c1) at (0.66,0.33,1);
\coordinate (d1) at (1,0.33,1);
\coordinate (e1) at (1,0,0.66);
\coordinate (f1) at (0.66,0,0.66);
\coordinate (g1) at (0.66,0.33,0.66);
\coordinate (h1) at (1,0.33,0.66);
\fill[color= gray!70] (0,0,1) -- (0,1,1)--(0,1,0) --(0.25,1,0)--(0.25,1,1)--(0.25,0,1)-- cycle;
\fill[color= gray!20] (0.25,0,1) -- (0.25,1,1)--(0.25,1,0) --(1,1,0)--(1,1,1)--(1,0,1)-- cycle;
\fill[color= gray!70] (1,0,1) -- (1,1,1)--(1,1,0) --(1.25,1,0)--(1.25,1,1)--(1.25,0,1)-- cycle;
\fill[color= gray!20] (1.25,0,1) -- (1.25,1,1)--(1.25,1,0) --(2,1,0)--(2,1,1)--(2,0,1)-- cycle;
\fill[color= gray!20] (2,0,0) -- (2,1,0)--(2,1,1) --(2,0,1)-- cycle;
\draw (2,0,0)--(2,1,0)--cycle;
\draw (0,1,0)--(2,1,0)--cycle;
\draw (2,0,1)--(2,1,1)--(0,1,1)--(0,0,1)--cycle;
\draw (2,0,0)--(2,0,1)--cycle;
\draw (2,1,0)--(2,1,1)--cycle;
\draw (0,1,0)--(0,1,1)--cycle;
\draw (1,0,1)--(1,1,1)--cycle;
\draw (1,1,0)--(1,1,1)--cycle;
\draw[->] (1,1.5,0) -- (2,1.5,0)
node[midway, right]{$\xi$};

\end{tikzpicture}
\caption{A three-dimensional rank-one laminate}
\label{figlami}
\end{figure}

We have the following result:

\begin{Prop} 
For any even integer $p \geq 2$, there exists a rank-one laminate such that for any $h$ the matrix \eq
   \mathscr D^{(2 p)}(h) := \sigma_* \mathscr \rho_*^{(2p)}(h) \sigma_* - \Big\langle \big( \sigma P^0\big)^\text{T} \mathscr \rho^{(2p)}(h) \big( \sigma P^0\big)\Big\rangle\label{Dcx}\qe is neither non-positive, nor non-negative. \label{cx}
\end{Prop}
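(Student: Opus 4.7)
The proof proceeds by explicit construction: we exhibit a specific rank-one laminate for which the matrix $\mathscr{D}^{(2p)}(h)$ can be computed in closed form, and then verify that its eigenvalues have mixed sign. The starting point is the classical rank-one lamination formula, which expresses $\sigma_*(h)$ explicitly in terms of $\sigma_1(h)$, $\sigma_2(h)$, $\theta$, and the lamination direction $\xi$ (as pictured in Figure~\ref{figlami}). Expanding this formula in powers of $h$ up to order $2p$ yields each coefficient $\sigma_*^{(k)}(h)$, $k\leq 2p$. The identity $\sigma_*(h)\rho_*(h) = I_3$ at order $2p$ then gives the recursion
\begin{equation*}
\rho_*^{(2p)}(h) = -\rho_* \, \sigma_*^{(2p)}(h)\, \rho_* - \sum_{k=1}^{2p-1} \rho_* \, \sigma_*^{(k)}(h)\, \rho_*^{(2p-k)}(h),
\end{equation*}
and an identical phase-wise recursion determines $\rho^{(2p)}(h)$ inside each phase. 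Since the unperturbed corrector $P^0$ of a rank-one laminate is piecewise constant with values fixed by the continuity of the tangential electric field and of the normal current, the average $\big\langle (\sigma P^0)^\T \rho^{(2p)}(h)(\sigma P^0)\big\rangle$ is likewise given in closed form.

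Next, we simplify by choosing a sparse ansatz: two isotropic unperturbed conductivities $\sigma_i = a_i I_3$ with $a_1 \neq a_2$, equal volume fractions $\theta = 1/2$, lamination direction $\xi = e_3$, and phase perturbations $\sigma_i^{(k)}(h)$ that vanish except for a small set of orders $k \in \{1,\ldots,2p\}$. In view of Proposition~\ref{4}, the Hall coefficients $\sigma_i^{(1)}$ must be taken nonzero: otherwise $\mathscr{D}^{(4)}(h)$ would be non-positive and the sign would be definite. With this ansatz, the recursion reduces $\mathscr{D}^{(2p)}(h)$ to a polynomial expression in a small number of free parameters and in the components of $h$.

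Finally, we fix numerical values of these parameters together with a magnetic field $h \neq 0$, and exhibit two vectors $\lambda_\pm \in \R^3$ such that $\mathscr{D}^{(2p)}(h)\lambda_+ \cdot \lambda_+ > 0$ and $\mathscr{D}^{(2p)}(h)\lambda_- \cdot \lambda_- < 0$, thereby proving the proposition. The main obstacle is the combinatorial explosion of terms in the order-$2p$ recursion; it is overcome by the sparsity of the ansatz, which confines the relevant contributions to a manageable subset, while leaving enough tunable freedom in the parameters to force both signs of the quadratic form $\lambda \mapsto \mathscr{D}^{(2p)}(h) \lambda \cdot \lambda$.
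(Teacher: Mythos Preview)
Your proposal outlines the same overall strategy as the paper --- an explicit rank-one laminate computation --- but it is not a proof: it stops at describing what one \emph{would} do without carrying it out. The final paragraph says to ``fix numerical values of these parameters together with a magnetic field $h \neq 0$, and exhibit two vectors $\lambda_\pm$'', yet no values are fixed and no vectors are exhibited. The entire content of the proposition lies precisely in this verification, and in particular in showing that it succeeds \emph{for every} $p \geq 2$. Asserting that there is ``enough tunable freedom in the parameters to force both signs'' is an intuition, not an argument; nothing in your sketch rules out that for some $p$ the quadratic form is in fact semidefinite under your ansatz.

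The paper avoids this gap by a choice that allows a closed-form computation of $\rho_*(h)$ itself (not just its Taylor coefficients): lamination in $e_1$, $h = h_3 e_3$, and phases $\sigma_1(h) = \theta^{-1} I_3 + \theta^{-1}\mathscr{E}(h)$, $\sigma_2(h) = \alpha_2 I_3 + \mathscr{E}(h)$. With this choice $\sigma_*(h)$ and $\rho_*(h)$ are explicit $3\times 3$ rational matrices in $h_3$, so $\rho_*^{(2p)}(h)$ is read off for all $p$ at once, and likewise for $\rho_i^{(2p)}(h)$ in each phase. One then obtains explicit formulas for the diagonal entries $\mathscr{D}^{(2p)}(h)_{1,1}$ and $\mathscr{D}^{(2p)}(h)_{2,2}$ as functions of $(\theta,\alpha_2,p)$, and the double limit $\theta \to 0$, $\alpha_2 \to \infty$ shows that they have opposite signs. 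It is this asymptotic argument --- not numerical values --- that handles the dependence on $p$ uniformly. Your ansatz (fixed $\theta = 1/2$, lamination in $e_3$, unspecified ``sparse'' perturbations, and a recursion for $\rho_*^{(2p)}$ rather than a closed form for $\rho_*(h)$) provides no comparable mechanism.
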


\begin{proof} We consider the particular case of \eqref{lami} where the magnetic field is $h = h_3 \, e_3$, $\chi$ is a $1$-periodic function only depending on $x_1$,  and \eq \sigma_1(h) = \theta^{-1} \, I_3 + \theta^{-1} \, \mathscr E(h), \quad \sigma_2(h) = \alpha_2 \, I_3 + \mathscr E(h), \quad \text{with} \quad \theta:=\langle \chi \rangle \in (0,1), \quad  \alpha_2 >0. \label{partcase}\qe The laminate corrector $P(h)$ is explicitly given by (see, \textit{e.g.}, \cite{Bricor}) \eq P(h) = \chi \, P_1(h) + (1-\chi) \, P_2(h),\qe where for $i=1$, $2$, \eq P_i(h) = I_3 + \displaystyle \frac{(1-\theta)^{(2-i)} \, (-\theta)^{i-1}}{1-\theta + \theta^2 \alpha_2} \begin{pmatrix}
\theta \, \alpha_2 -1 & (1-\theta) \, h_3 & 0 \\
0 & 0& 0 \\
0& 0 & 0
\end{pmatrix}. \qe The homogenized conductivity is defined by \eq \sigma_*(h) = \big \langle \chi \, \sigma_1(h) P_1(h) + (1- \chi) \,  \sigma_2(h) P_2(h) \big \rangle, \qe which yields\eq \sigma_*(h) = \begin{pmatrix} \vspace{0.5cm} \displaystyle \frac{\alpha_2}{1-\theta + \theta^2 \, \alpha_2} & - \displaystyle \frac{1-\theta + \theta \, \alpha_2}{1-\theta + \theta^2 \, \alpha_2} \, h_3 & 0 \\ \vspace{0.5cm} \displaystyle \frac{1-\theta + \theta \, \alpha_2}{1-\theta + \theta^2 \, \alpha_2} \, h_3 & 1+ (1-\theta) \, \alpha_2 + \displaystyle \frac{(1-\theta)^3}{1-\theta + \theta^2 \, \alpha_2} \, h_3^2 & 0 \\ 0 & 0 & 1+ (1-\theta) \, \alpha_2 \end{pmatrix}.\qe Inverting this matrix, we obtain the homogenized resistivity \eq \rho_*(h) =  \begin{pmatrix} \vspace{0.3cm}\displaystyle \frac{b \, (1-\theta + \theta^2 \, \alpha_2)^2  + (1-\theta)^3 \, h_3^2}{b \, \alpha_2 + c \, h_3^2 } & \displaystyle \frac{1- \theta + \theta \, \alpha_2}{b \, \alpha_2 + c \, h_3^2} & 0 \\
\vspace{0.3cm} -\displaystyle \frac{1- \theta + \theta \, \alpha_2}{b \, \alpha_2 + c \, h_3^2} & \displaystyle \frac{\alpha_2}{b \, \alpha_2  + c \, h_3^2} & 0 \\ 0 & 0 & b^{-1}\end{pmatrix},\text{ where} \left\{  \hspace{-0.2cm} \begin{array}{l} b := 1+ (1-\theta), \\
c := 1-\theta + \alpha_2. \end{array} \right. \label{homrho} \qe
Expanding the quantities $\rho_i(h) = \sigma_i(h)^{-1}$,  we obtain the expressions \eq \rho_1^{(2p)}(h) = (-1)^p \, \theta \,h_3^{2p} \, K \quad \text{and} \quad \rho_2^{(2p)}(h) = (-1)^p \, \displaystyle \frac{h_3^{(2p)}}{\alpha_2^{2p+1}} \, K, \quad \text{where } K = \left(\begin{smallmatrix} 1 & 0 &0 \\
0 & 1 & 0 \\
0 & 0 & 0\end{smallmatrix} \right).\label{devrholoc}\qe Using \eqref{partcase}-\eqref{devrholoc}, we can compute $\mathscr D^{(2 p)} (h)$. All the coefficients in the matrix $\mathscr D^{(2 p)} (h)$ are zero except the entries \begin{equation} \left\{\!\! \begin{array}{l}
\vspace{0.3cm} \mathscr D^{(2 p)}(h)_{1,1} =  \displaystyle \frac{(-1)^p}{(1-\theta + \theta^2 \, \alpha_2)^2} \left [  \frac{(1-\theta + \theta \alpha_2)^2 \, (1-\theta + \alpha_2)^{p-1}}{\alpha_2^{p-1} \, \big( 1+ (1-\theta) \alpha_2 \big)^p} - \frac{1-\theta}{\alpha_2^{2 p -1}} - (\theta \, \alpha_2)^2\right] \, h_3^{2 p }, \\
 \mathscr D^{(2 p)}(h)_{2,2} := (-1)^p \left [ \displaystyle \frac{ (1-\theta + \alpha_2)^{p}}{\alpha_2^{p} \, \big( 1+ (1-\theta) \alpha_2 \big)^{p - 1}} - 1 - \frac{1-\theta}{\alpha_2^{2 p -1}}\right] \, h_3^{2 p }.
\end{array} \right.\label{DDDD}\end{equation}  

As $p \geq 2$, passing to the limit in \eqref{DDDD} successively when $\theta \to 0$ and $\alpha_2 \to \infty$, we obtain that \eq \lim \limits_{\theta \to 0} \mathscr D^{(2p)}(h) \underset{\alpha_2 \to \infty}{\sim} (-1)^p \, h_3^{2p}\begin{pmatrix} \displaystyle \frac{1}{\alpha_2^{p}} & 0 &0 \\
0 & \displaystyle  - 1 & 0 \\
0 & 0 & 0\end{pmatrix}.\qe Finally, when $\theta$ is small enough and $\alpha_2$ large enough, the matrix $\mathscr D^{(2 p)}(h)$ has a positive eigenvalue, and a negative eigenvalue, which concludes the proof.
\end{proof}  

\begin{Rem}
The case $p=1$ confirms Theorem \ref{Thineq}. Indeed, we have \eqs \mathscr D^{(2)}(h) = \displaystyle \displaystyle \frac{(1+ (1-\theta) \alpha_2)^{-1}}{(1-\theta + \theta^2 \, \alpha_2)^2} \left [ \left(\frac{1-\theta}{\alpha_2} + \theta \, \big(\theta \alpha_2^2\big) \right) \, \big( (1-\theta) \alpha_2 + \theta \, \theta^{-1} \big) - (1-\theta + \theta \alpha_2)^2 \right] \, h_3^{2} \, (e_1 \otimes e_1), \qes which is positive by the Cauchy-Schwarz inequality. This formula is a particular case of formula \eqref{diffone} below.
\end{Rem}

\section{Case of equality for a few periodic structures} \label{exemples}

In this section we consider various periodic microstructures in the case of equality for \eqref{ineqper}. On the one hand, Section \ref{lay} provides an explicit expression of the difference between the two terms of \eqref{ineqper} for layered structures and thus the different cases of equality. On the other hand, Section \ref{cyli} only provides the cases of equality for columnar structures: condition \eqref{condRotper} would have consequences on the Hall matrix and the conductivity of the microstructure. We use the notations of Section \ref{secper}.

More precisely, we study the consequences of $D(h,h) = 0$ in \eqref{introD}. For a given averaged-value $\lambda \in \R^3$ of the electric field $e = P^0 \lambda$ in a composite conductor, we have the relations for the local current and the averaged-value of the current (see Remark \ref{electric}) \eq j = \sigma e = \sigma P^0 \lambda, \quad \langle j \rangle = \sigma_* \langle e \rangle = \sigma_* \lambda. \qe We set \eq \mathscr D(h,h):= \sigma_* \, \mathscr M_*(h,h) \, \sigma_* - \left\langle \big( \sigma P^0\big)^\text{T} \mathscr M(h,h) \big( \sigma P^0\big) \right \rangle, \label{diffDronde}\qe so that, by the symmetry of $\sigma_*$, it follows that \eq D(h,h) = \mathscr D(h,h) \lambda \cdot \lambda.\qe

\subsection{Periodic layered structures}
\label{lay}

In this section, we establish for a periodic layered structure depending on a direction $\xi \in \R^3$, $|\xi| = 1$, an exact formula for the difference between the effective magneto-resistance and the averaged local magneto-resistance.

Let $\sigma(h)$ be a perturbed conductivity in $\mathcal M(\alpha,\beta;\Omega)$ only depending on $\xi \cdot y$, and satisfying the expansion \eq \sigma(h)(y)= a(\xi \cdot y)\, I_3 + s(\xi \cdot y) \, \mathscr E(h) + \mathscr N(h,h)(\xi \cdot y) + o(|h|^2), \quad \text{for a.e. } y \in Y, \label{perper}\qe where $a : \R \to [\alpha,\beta]$ and $s : \R \to \R$ are $1$-periodic functions. By Proposition \ref{relations}, we have \eq R = r \, I_3 = - \displaystyle \frac{s}{a^2} \, I_3.\label{srone}\qe Considering the expansions \eqref{exp-cond*per}-\eqref{exp-Pper}, we can state a result precising Corollary \ref{Thineqper}:

\begin{Prop}
Consider a conductivity $\sigma(h)$ satisfying \eqref{perper} and the matrix-valued function $\mathscr D(h,h)$  defined by \eqref{diffDronde}. \begin{itemize} \item[$\bullet$] When $h$ is not parallel to $\xi$, we have \eq O(h)^\T \mathscr D(h,h) \, O(h) = \begin{pmatrix}
  \vspace{0.2cm} d_1 \, |h \times \xi|^2 & d_3 \, (h \cdot \xi) |h \times \xi| & 0 \\
  \vspace{0.2cm} d_3 \, (h \cdot \xi) |h \times \xi| & d_2 \, (h \cdot \xi)^2 & 0 \\
   0&0& d_2 \, (h \cdot \xi)^2
\end{pmatrix}, \label{diffone}\qe where $O(h)$ is the change-of-basis matrix from the canonical basis to\eq \mathscr{\hat{\mathscr B}} = \left(\xi, \frac{\xi \times (\xi \times h)}{|h \times \xi|}, \frac{h \times \xi}{|h \times \xi|}\right), \ \ \text{and} \ \ \left\{\!\! \begin{array}{l l}
\vspace{0.2cm} d_1:= \displaystyle \big \langle a^{-1} \big \rangle^{-2}\left[ \left \langle \displaystyle a \, r^2 \right \rangle -  \langle a\rangle^{-1} \left \langle \displaystyle a \, r\right \rangle^2 \right],\\
\vspace{0.2cm} d_2:= \displaystyle \left \langle \displaystyle a^3 \, r^2 \right \rangle -  \langle a\rangle^{-1} \langle a^2 \, r \rangle^2, \\
d_3:= \displaystyle \big \langle a^{-1} \big \rangle^{-1}\left[ \left \langle \displaystyle a^2 \, r^2 \right \rangle -  \langle a\rangle^{-1} \left \langle \displaystyle a \, r \right \rangle \langle a^2 \, r \rangle \right].
\end{array}  \;\;\;\right.\qe

\item[$\bullet$] When $h$ is parallel to $\xi$, we have \eq O^\T \mathscr D(h,h) \, O = d_2 \, (h \cdot \xi)^2 \begin{pmatrix}
0 & 0 &0 \\
0 & 1 & 0 \\
0 & 0 & 1
\end{pmatrix}, \label{difftwo}\qe where $O$ is the change-of-basis matrix from the canonical basis to an orthonormal basis \linebreak[4] $\mathscr{\hat{\mathscr B}} = (\xi ,u , v)$, for suitable $u,v \in \R^3$.
\end{itemize}

Moreover, $\mathscr D(h,h) = 0$ if and only if one of the following conditions holds: \begin{itemize}
\item[$\bullet$] $h = 0$;
\item[$\bullet$] $h \neq 0$ is orthogonal to $\xi$, and $r$ is a constant;
\item[$\bullet$] $h \neq 0$ is parallel to $\xi$, and $a r$ is a constant;
\item[$\bullet$] $h \neq 0$ is neither parallel to $\xi$, nor orthogonal to $\xi$, and $a r$, $r$ are constant. 
\end{itemize}
\label{onedirection}
\end{Prop}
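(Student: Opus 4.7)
The strategy is to reduce everything to one-dimensional calculations in the layering direction $\xi$, and then to apply Corollary \ref{THconvper} to evaluate $\mathscr D(h,h)$ in closed form. After an orthogonal change of coordinates making $\xi = e_1$, all $Y$-periodic quantities depend only on $y_1$. A standard laminate argument then yields
\eqs
P^0 = \frac{1}{a\,\langle a^{-1}\rangle}\, \xi \otimes \xi + (I_3 - \xi \otimes \xi),
\qes
the reason being that, since $P^0$ depends only on $y_1$ and is curl-free, its rows of index $i \geq 2$ are constant in $y_1$ and hence equal to those of $I_3$ by the average constraint $\langle P^0\rangle = I_3$; then $\Div(\sigma P^0) = 0$ with $\sigma = a\, I_3$ forces $a\, P^0_{1,j}$ constant in $y_1$, which fixes the first row via the same average condition. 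This in turn gives $\sigma P^0 = \langle a^{-1}\rangle^{-1}\, \xi \otimes \xi + a\,(I_3 - \xi \otimes \xi)$ and the explicit constant matrix $\sigma_* = \langle \sigma P^0\rangle$.

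Next, I would compute $P^1(h)$. From relation \eqref{relaS} with $\sigma = a\, I_3$, one obtains $S = -a^2 r\, I_3 = s\, I_3$, so that $\mathscr{E}(Sh) = s\, \mathscr{E}(h)$. The first-order corrector equation in \eqref{div2}, namely $\Div(\sigma P^1(h) + s\, \mathscr{E}(h)\, P^0) = 0$, combined with the zero-average and curl-free conditions on $P^1(h)$, reduces again to an ODE in $y_1$: the rows of $P^1(h)$ with index $i \geq 2$ vanish, while the first row is determined by requiring $a\, P^1(h)_{1,j} + s\,(\mathscr{E}(h)\, P^0)_{1,j}$ to be constant in $y_1$ and $P^1(h)_{1,j}$ to have zero $Y$-average. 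A direct computation gives $P^1(h)_{1,1}=0$ and the other two entries as explicit affine functions of $h_1, h_2$ with coefficients involving $a, r$ and their $Y$-averages.

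The main step is to plug $P^0, P^1(h), S, S_*$ into formula \eqref{Thconv1per}, which reads
\eqs
\mathscr D(h,h) = \Big\langle\, \big(\mathscr{E}(Sh) P^0 + \sigma P^1(h)\big)^\T \sigma^{-1}\, \big(\mathscr{E}(Sh) P^0 + \sigma P^1(h)\big)\, \Big\rangle - \mathscr{E}(S_* h)^\T \sigma_*^{-1}\, \mathscr{E}(S_* h).
\qes
After performing the $3 \times 3$ matrix products and $Y$-averages in the frame $\xi = e_1$, one obtains a matrix whose nonzero entries live only in the subspace adapted to $\mathrm{span}(\xi, h)$ and its orthogonal complement, and whose coefficients depend on $h$ only through $h \cdot \xi$ and $|h \times \xi|$. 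Changing to the orthonormal basis $\hat{\mathscr B}$ then exposes the rotational symmetry around $\xi$ and produces the form \eqref{diffone}, each $d_i$ arising as a specific quadratic functional in $a$ and $r$. The case $h \parallel \xi$ is a degenerate limit: $|h \times \xi| = 0$ kills the $d_1$ and $d_3$ contributions, and the rotational invariance of the laminate about $\xi$ means that any orthonormal completion $(\xi, u, v)$ gives \eqref{difftwo}. The main obstacle in this step is the bookkeeping required to verify that the cross terms from $\mathscr{E}(Sh) P^0$ and $\sigma P^1(h)$ combine into coefficients depending on $h$ only via $|h\times\xi|$ and $h\cdot\xi$.

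Finally, since Corollary \ref{Thineqper} already ensures $\mathscr D(h,h) \geq 0$, the equality $D(h,h) = 0$ for a given $h$ reduces to the vanishing of the active entries $d_i$. Cauchy-Schwarz applied to $(\sqrt a,\, \sqrt a\, r)$ gives $\langle a r^2\rangle\,\langle a\rangle \geq \langle a r\rangle^2$ with equality iff $r$ is constant, hence $d_1 = 0 \Leftrightarrow r$ constant; applied to $(\sqrt a,\, a^{3/2} r)$ it gives $\langle a^3 r^2\rangle\,\langle a\rangle \geq \langle a^2 r\rangle^2$ with equality iff $a r$ is constant, hence $d_2 = 0 \Leftrightarrow a r$ constant. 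The four cases listed in the proposition then follow by checking which of $d_1 |h\times\xi|^2$, $d_3 (h\cdot\xi)|h\times\xi|$, $d_2 (h\cdot\xi)^2$ are present, and observing that when $h$ is neither parallel nor orthogonal to $\xi$, the constraints $d_1 = d_2 = 0$ force both $r$ and $a r$ to be constant, whereupon $d_3 = 0$ follows automatically.
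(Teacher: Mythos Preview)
Your proposal is correct and follows essentially the same route as the paper: both proofs evaluate the identity of Corollary~\ref{THconvper} on the explicit laminate correctors $P^0$, $P^1(h)$ and then read off the equality cases via Cauchy--Schwarz on the pairs $(\sqrt a,\sqrt a\,r)$ and $(\sqrt a,a^{3/2}r)$. The only difference is organisational: the paper performs a single change of basis to $\hat{\mathscr B}$ at the outset, which forces $\hat h_3=0$ and makes $\hat P^1(\hat h)$ have a single nonzero entry, whereas you first rotate so that $\xi=e_1$ (leaving $h$ with three components) and pass to $\hat{\mathscr B}$ only after computing $\mathscr D(h,h)$; this costs a little extra bookkeeping but changes nothing of substance.
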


\begin{proof} \textcolor{white}{t}

\smallskip

\noindent \textit{First case: $h \neq 0$ is not parallel to $\xi$.} By Corollary \ref{THconvper} we have \eq \mathscr D(h,h)= \left \langle \big( \mathscr E(S h) P^0 + \sigma P^1(h)\big)^\text{T} \sigma^{-1} \big( \mathscr E(S h) P^0 + \sigma P^1(h)\big) \right \rangle - \mathscr E(S_* h)^\text{T} \sigma_*^{-1} \mathscr E(S_* h). \label{Dhhnew}\qe 

For the sake of simplicity denote $O := O(h)$. Denoting by $\hat{\cdot}$ the quantities with respect to the new basis $\hat{\mathscr {B}}$, we have the following change-of-basis formulas respectively for the system of coordinates, the local conductivity, the zero and first-order terms in the expansion of the corrector and the local $S$-matrix defined by \eqref{exp-condper}: \eq \; \left\{\!\! \begin{array}{l l}
\vspace{0.2cm} \hat{y} = O^\T y, & \hat{h} = O^\T h, \\
\vspace{0.2cm} \hat{\sigma}\big( \hat{h} \big) = O^\T \sigma(h) \, O, & \hat{\sigma} = O^\T \sigma \, O, \\ \vspace{0.2cm} \hat{S} = O^\T S \, O & \text{(as a consequence of \eqref{lemalg})},\\
\hat{P}^0 = O^\T P^0 \, O, &  \hat{P}^1\big( \hat{h} \big) = O^\T P^1(h) \, O, \\
\end{array}  \;\;\;\right.\label{chcoord}\qe where the last equality is a consequence of the relation \eq \forall \, \lambda \in \R^3, \quad \hat{P}\big( \hat{h} \big) \hat{\lambda} = O^\T P(h) \lambda = O^\T P(h) \, O \, O^\T \lambda = O^\T P(h) \, O \, \hat{\lambda}, \quad \text{with } \hat{\lambda} = O^\T \lambda.\qe Due to Lemma 38 of \cite{Taropti} for the homogenized conductivity and to \eqref{lemalg} for the homogenized $S$-matrix defined by \eqref{exp-cond*per}, we have \eq \hat{\sigma}_*\big( \hat{h} \big) = O^\T \sigma_*(h) \, O, \quad \hat{\sigma}_* = O^\T \sigma_* \, O  \quad \text{and}\quad \hat{S}_* = O^\T S_* \, O.\label{chcoord*}\qe From these relations and \eqref{Dhhnew} it is easy to check that the difference term $\mathscr D(h,h)$ defined by \eqref{diffDronde} satisfies the relation \eq \mathscr{\hat{\mathscr D}}\big( \hat{h}, \hat{h}\big)  = O^\T \mathscr D(h,h) \, O,\qe where \eq \mathscr{\hat{\mathscr D}}\big( \hat{h}, \hat{h}\big) := \Big \langle \big( \mathscr E\big(\hat{S} \, \hat{h}\big) \hat{P}^0 + \hat{\sigma} \hat{P}^1\big(\hat{h})\big)^\text{T} \hat{\sigma}^{-1} \big( \mathscr E\big(\hat{S} \, \hat{h}\big) \hat{P}^0 + \hat{\sigma} \hat{P}^1\big(\hat{h}\big)\big) \Big \rangle - \mathscr E\big(\hat{S}_* \, \hat{h})^\text{T} \hat{\sigma}_*^{-1} \mathscr E\big(\hat{S}_* \, \hat{h} \big).\label{Dtilde}\qe Let us now compute $\mathscr{\hat{\mathscr D}}\big( \hat{h}, \hat{h}\big)$ in \eqref{Dtilde}. We have \eq h = (h \cdot \xi, -|h \times \xi|, 0)^\T = (
\hat{h}_1, \hat{h}_2,0)^\T. \label{defh}\qe By isotropy, we have $\hat{\sigma} = a \, I_3$ and $ \hat{S} = s \, I_3$. By the uniqueness of the solution of problem \eqref{def-corrper}-\eqref{condmoyper}, we have \eq \hat{P}^0 = \begin{pmatrix} \displaystyle\frac{\big \langle a^{-1} \big \rangle^{-1}}{{a}} & 0 & 0\\
0 & 1 & 0 \\ 0 &0 & 1\end{pmatrix} \quad \text{and} \quad \hat{P}^1\big(\hat{h}\big) = \begin{pmatrix} \displaystyle0 & 0 & - \displaystyle\frac{\displaystyle s-\big \langle a^{-1} \big \rangle^{-1} \left\langle \frac{s}{a}\right\rangle}{{a}}\, \hat{h}_2\\
0 & 0 & 0 \\ 0 &0 & 0\end{pmatrix}.\label{ttttt}\qe Hence, combining the equality \eqref{ttttt} with the classical periodic homogenization formula (see, \textit{e.g.}, \cite{tpeccot} for more details) \eq \hat{\sigma}_* = \big \langle \hat{\sigma} \, \hat{P}^0\big \rangle = \begin{pmatrix} \displaystyle \big \langle a^{-1} \big \rangle^{-1} & 0 & 0\\
0 & \langle a \rangle & 0 \\ 0 & 0 & \langle a \rangle \end{pmatrix}. \qe Moreover, we have \eq \label{calc1} \mathscr E\big(\hat{S} \, \hat{h} \big) \hat{P}^0 + \hat{\sigma} \, \hat{P}^1\big(\hat{h}\big) = s \, \mathscr E\big(\hat{h}\big) \, \hat{P}^0 + a \, \hat{P}^1\big(\hat{h}\big) = \begin{pmatrix} 0 & 0 & \vspace{0.3cm} \displaystyle \big \langle \sigma^{-1} \big \rangle^{-1} \left\langle \frac{s}{a}\right\rangle \hat{h}_2\\
\vspace{0.3cm} 0 & 0 & - s \, \hat{h}_1 \\ -\displaystyle \big \langle \sigma^{-1} \big \rangle^{-1}  \frac{s}{a} \, \hat{h}_2 & s \, \hat{h}_1 & 0 \end{pmatrix} .\qe Also by Corollary \ref{THconvper}, we obtain that \eq \hat{S}_* = \big \langle \text{Cof} \big( \hat{P}^0 \big) \hat{S} \big\rangle = \begin{pmatrix}
\langle s \rangle & 0 & 0 \\ 0 & \displaystyle \langle a^{-1}\rangle^{-1} \left\langle \displaystyle\frac{s}{a}\right\rangle & 0 \\ 
0 & 0 & \langle a^{-1}\rangle^{-1} \left\langle \displaystyle \frac{s}{a}\right\rangle
\end{pmatrix}, \qe and \eq \mathscr E\big (\hat{S}_* \, \hat{h} \big) = \begin{pmatrix} \vspace{0.3cm} 0 & 0 & \displaystyle \big \langle a^{-1} \big \rangle^{-1} \left\langle \frac{s}{a}\right\rangle \hat{h}_2\\
\vspace{0.3cm} 0 & 0 & - \langle s \rangle \, \hat{h}_1 \\  -\displaystyle \big \langle a^{-1} \big \rangle^{-1} \left\langle \frac{s}{a}\right\rangle \hat{h}_2 & \langle s \rangle \, \hat{h}_1 & 0 \end{pmatrix}. \label{calc2}\qe Putting \eqref{calc1}-\eqref{calc2} in \eqref{Dtilde}, we get that \eq \mathscr{\hat{\mathscr D}}\big( \hat{h}, \hat{h}\big) = \begin{pmatrix}
 \vspace{0.3cm} d_1 \, \hat{h}_2^2 & - d_3 \, \hat{h}_1 \, \hat{h}_2 & 0 \\
  \vspace{0.3cm} - d_3 \, \hat{h}_1 \, \hat{h}_2 & d_2 \, \hat{h}_1^2 & 0 \\
    0&0& d_2 \, \hat{h}_1^2
\end{pmatrix}, \qe where\eq \; \left\{\!\! \begin{array}{l l}
\vspace{0.2cm} d_1:= \displaystyle \big \langle a^{-1} \big \rangle^{-2}\left[ \left \langle \displaystyle \frac{s^2}{a^3} \right \rangle -  \langle a\rangle^{-1} \left \langle \displaystyle \frac{s}{a}\right \rangle^2 \right],\\
\vspace{0.2cm} d_2:= \displaystyle \left \langle \displaystyle \frac{s^2}{a} \right \rangle -  \langle a\rangle^{-1} \langle s \rangle^2, \\
d_3:= \displaystyle \big \langle a^{-1} \big \rangle^{-1}\left[ \left \langle \displaystyle \frac{s^2}{a^2} \right \rangle -  \langle a\rangle^{-1} \left \langle \displaystyle \frac{s}{a}\right \rangle \langle s \rangle \right].
\end{array}  \;\;\;\right.\qe We deduce \eqref{diffone} from \eqref{srone} and \eqref{defh}.
   
   \smallskip
   
\noindent \textit{Second case: $h \neq 0$ is parallel to $\xi$.} Then, we have $P_1\big(h\big) = 0$, and the computations are quite similar. 

\bigskip
   
\noindent \textit{Cases of equality.} When $h \neq 0$ is not parallel to $\xi$ but orthogonal to $\xi$, by \eqref{diffone} $\mathscr D(h,h) = 0$ implies that $d_1 = 0$. Thus, the equality \eq \left \langle \displaystyle a \, r^2 \right \rangle =  \langle a\rangle^{-1} \left \langle \displaystyle a \, r\right \rangle^2 \qe can be regarded as the case of equality in the Cauchy-Schwarz inequality satisfied by the functions $\sqrt{a}$ and $\sqrt{a} \, r$ in $L^2\big( [0,1] \big)$. Therefore, $\sqrt{a}$ is proportional to $\sqrt{a} \, r$, hence $r$ is constant. The converse is immediate. The other cases are similar.
\end{proof}

\subsection{Periodic columnar structures}
\label{cyli}

\subsubsection{The general case}
In this section, we consider columnar isotropic structures in the direction $y_3$. More precisely, the $Y$-periodic conductivity $\sigma(h)$ of \eqref{exp-condper} only depends on $y' = (y_1,y_2)$ with \begin{equation} \; \left\{\!\! \begin{array}{l l}
\vspace{0.1cm}\sigma(0) := \sigma(y')  \, I_3, \quad & \sigma \in L_\sharp^\infty\big((0,1)^2;[\alpha,\beta] \big),\\
\vspace{0.1cm}S := s(y') \, I_3, \quad &s \in L_\sharp^\infty\big((0,1)^2;\R \big), \\
\vspace{0.1cm}\mathscr N(h,h) := \mathscr N(h,h)(y'), \quad & \mathscr N(h,h) \in L_\sharp^\infty\big((0,1)^2;\R_s^{3 \times 3} \big). \\
\end{array} \right. \;\;\;\label{cylcond}\end{equation} Consequently, by Proposition \ref{relations} the expansion \eqref{exp-p1per} of $\rho(h)$ satisfies \begin{equation} \; \left\{\!\! \begin{array}{l l}
\vspace{0.2cm}\rho = \sigma(y')^{-1} \, I_3, \\
\vspace{0.2cm}R = r(y') \, I_3, & \text{with } r = - \sigma^{-2} \, s, \\
\vspace{0.1cm}\mathscr M(h,h) := \mathscr M(h,h)(y'), \quad & \mathscr M(h,h) \in L_\sharp^\infty\big((0,1)^2;\R_s^{3 \times 3} \big). \\
\end{array} \right. \;\;\;\end{equation}

We have the following result:

\begin{Prop} Consider a conductivity $\sigma(h)$ satisfying \eqref{cylcond} and set $h' = (h_1,h_2)$. Assume that \eq r^{-1} \in L^1(Y) \quad \text{and} \quad \big \langle (\sigma \, r )^{-1}\big \rangle \neq 0.\label{condr}\qe Then, $\mathscr D(h,h) =0$ (see \eqref{diffDronde}) is an equality if and only if one of the following conditions holds \begin{itemize}
      \item $h=0$;
      \item $h'=0$, $h_3 \neq 0$, and the Hall coefficient $r$ is constant;
      \item $h' \neq 0$, $h_3 = 0$ and there exist two positive functions $f$, $g$ in $L^\infty(\R)$, with $f^{-1}$, $g^{-1}$ in $L^\infty(\R)$, which are $h_i$-periodic for $i=1,2$, and a constant $C$ such that \eq \quad \sigma(y') =f(h' \cdot y') \ g (Jh' \cdot y') \quad \text{and} \quad  r(y') = \frac{C}{f(h' \cdot y')} \quad \text{a.e. } y' \in (0,1)^2,\label{cylsigmafg}\qe where $J := \left(\begin{smallmatrix} 0 & -1 \\ 1 & 0\end{smallmatrix}\right)$;
      \item $h' \neq 0$, $h_3 \neq 0$, the Hall coefficient $r$ is constant, and there exists a function $g$ in $L^\infty(\R)$ with $g^{-1}$ in $L^\infty(\R)$ which is $h_i$-periodic for $i=1,2$, such that \eq \sigma(y') = \ g (Jh' \cdot y') \quad \text{a.e. } y' \in (0,1)^2.\label{cylsigmag}\qe
      \end{itemize}
      
      \smallskip
      
Moreover, when $h_1 h_2 \neq 0$ and $h_1 / h_2 \notin \mathbb Q$, $\sigma$ and $r$ are constant.
\label{cyl}
\end{Prop}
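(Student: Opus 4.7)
My plan is to take the distributional curl equation
\[
\mathrm{Curl}\bigl(\mathscr{E}(Rh)\,\sigma\, P^0\bigr) = 0 \quad \text{in } \mathscr{D}'(\R^3)^{3\times 3\times 3}
\]
from Corollary \ref{Thineqper} as the starting point, and unpack it under the columnar assumptions \eqref{cylcond}. Since $\sigma$ and $r$ depend only on $y' = (y_1, y_2)$ and $U^0_3=y_3$ solves the third corrector problem, the corrector splits as $P^0 = \bigl(\begin{smallmatrix} P' & 0 \\ 0 & 1 \end{smallmatrix}\bigr)$, where $P'(y')$ is the 2D corrector of the scalar conductivity $\sigma(y')$ on $(0,1)^2$. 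Substituting this together with $R = r I_3$ and $Rh = r(h_1, h_2, h_3)$ into $\mathscr{E}(Rh)\,\sigma\, P^0$ produces an explicit $3\times 3$ matrix whose nine entries depend linearly on $h$ and on the four entries of $\sigma P'$.

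I would then analyze the curl-free condition column by column, following the four cases listed in the proposition. The case $h = 0$ is immediate. For $h' = 0$, $h_3 \neq 0$, only the $(2,3)$- and $(3,2)$-entries of $\mathscr{E}(Rh)$ contribute, and the column-wise curl-free conditions reduce to pointwise constraints involving $r$, $\sigma$, and the second row of $\sigma P'$; exploiting that columns of $P'$ are 2D gradients with unit mean and that columns of $\sigma P'$ are 2D divergence-free collapses these to $r$ being constant. For $h' \neq 0$, $h_3 = 0$, the curl system sits inside the 2D block. Passing to the adapted coordinates $s = h'\cdot y'$ and $t = Jh'\cdot y'$ (so that $\partial_s, \partial_t$ correspond to translations along and transverse to $h'$) separates the conditions into $s$- and $t$-derivative identities, which I expect to force the multiplicative decomposition $\sigma(y') = f(s)g(t)$ together with $r(y') = C/f(s)$. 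The mixed case $h' \neq 0$, $h_3 \neq 0$ superposes both sets of constraints: the case-$h'=0$ analysis gives $r$ constant, and then the case-$h_3=0$ product form combined with $r = C/f(s)$ forces $f$ constant, leaving $\sigma = g(Jh'\cdot y')$.

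The principal obstacle I foresee is the 2D case $h_3 = 0$: one must extract the multiplicative structure of $\sigma$ from a coupled system of first-order PDEs on $(0,1)^2$ involving $r$, $\sigma$, and the corrector entries $a_{ij} = \sigma(P')_{ij}$. The algebraic identity $S = -\mathrm{Cof}(\sigma\, I_3)\,R = -\sigma^2\, r\,I_3$ from Proposition \ref{relations}, combined with the 2D stream-function representation of the divergence-free currents $\sigma P' e_k$, should allow one to eliminate the corrector entries in favor of $\sigma$ and $r$ alone; after this elimination the $(s,t)$-separation becomes explicit and integration yields the stated product form.

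For the concluding irrational-ratio claim, once $\sigma(y') = f(h'\cdot y')\,g(Jh'\cdot y')$ is known to be $Y$-periodic, translating $y'$ by $e_1$ and $e_2$ forces each of $f$ and $g$ to admit both $h_1$ and $h_2$ as periods. When $h_1/h_2 \notin \mathbb{Q}$, the subgroup $h_1\mathbb{Z} + h_2\mathbb{Z}$ is dense in $\R$, and a bounded measurable function with a dense set of periods is almost-everywhere constant; hence $f$, $g$, $\sigma$, and $r = C/f$ are all constant.
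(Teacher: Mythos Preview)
Your overall strategy matches the paper's: start from the curl-free condition of Corollary~\ref{Thineqper}, pass to coordinates adapted to $h'$, and split into the four cases. The block structure of $P^0$ and the irrational-ratio argument at the end are also as in the paper. However, two specific technical ingredients are missing, and without them the central steps do not close.

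First, you never invoke the Alessandrini--Nesi result that the two-dimensional periodic corrector satisfies $\det P' > 0$ a.e. The paper names this explicitly as ``the key ingredient'' for the case $h'=0$, $h_3\neq 0$ (which is the genuinely two-dimensional situation of \cite{Brimagneto}); merely knowing that columns of $P'$ are gradients of unit mean and columns of $\sigma P'$ are divergence-free is not enough to force $r$ constant. The same positivity is used again in the case $h'\neq 0$, $h_3=0$: once one knows $\partial_{z_2}v^1=0$, Alessandrini--Nesi gives $\partial_{z_1}v^1\,\partial_{z_2}v^2>0$ a.e., and only then is $f(z_1):=(\partial_{z_1}v^1)^{-1}$ well defined, after which $\operatorname{div}(\sigma\nabla v^1)=0$ yields $\sigma\,\partial_{z_1}v^1=g(z_2)$ and hence $\sigma=f(z_1)g(z_2)$. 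Your proposed route via stream functions and the identity $S=-\sigma^2 r I_3$ does not by itself supply this pointwise non-degeneracy.

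Second, you do not explain where the hypothesis $\langle(\sigma r)^{-1}\rangle\neq 0$ of \eqref{condr} enters. In the paper's argument for $h'\neq 0$, $h_3=0$, the first two curl equations yield $(\sigma r)\,\partial_{z_2}v^1 = C$ for some constant $C$; since $\langle\partial_{z_2}v^1\rangle = f_1\cdot f_2 = 0$, one gets $C\,\langle(\sigma r)^{-1}\rangle = 0$, and it is precisely \eqref{condr} that forces $C=0$ and hence $v^1=v^1(z_1)$. Without this step the separation of variables does not start. Once you incorporate Alessandrini--Nesi and the use of \eqref{condr}, your outline becomes essentially the paper's proof.
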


\begin{Rem} The case $(h_1,h_2) = 0$ corresponds to the two-dimensional case in \cite{Brimag} (Theorem 2.4). In the case $(h_1,h_2) \neq (0,0)$, $h_3 = 0$, $f$ and $g$ are not unique. For example $f$ and $g$ can be chosen such that $\langle f^{-1}\rangle = 1$ to ensure the uniqueness. \label{uniqfg}
\end{Rem}

\noindent \textit{Proof of Proposition \ref{cyl}.} We work in the orthonormal basis $(f_1,f_2,e_3)$ defined by\eq (f_1,f_2):=\left \{ \begin{array}{c l} \displaystyle \vspace{0.2cm} \left( \frac{h'}{|h'|}, \frac{J h'}{|h'|} \right) & \text{if } h' \neq (0,0), \\  (e_1,e_2) & \text{if } h' = (0,0). \end{array} \right.\qe In the new basis, we have $ h = |h'| f_1 + h_3 e_3$. The associated system of coordinates is given by \eq \left \{ \begin{array}{l} \displaystyle \vspace{0.2cm} z_1 := \frac{h_1 y_1 + h_2 y_2}{|h'|}, \\  \displaystyle  \vspace{0.2cm} z_2 : = \frac{h_1 y_2 - h_2 y_1}{|h'|}, \\  z_3 := y_3, \end{array} \right. \ \text{if } h' \neq 0, \quad \ \text{and} \ \quad z = y, \ \text{if } h' = 0. \label{chgcoor}\qe We denote for $i=1,2$, $P^0 f_i:=\nabla u^i$, $P^0 e_3:=\nabla u^3$ and for $i=1,2,3$, $v^i(z) = u^i(y)$.

Since the gradient, the divergence and the curl are invariant by a change of orthonormal right-handed basis, by \eqref{condRotper} we have for any $i=1,2,3$, \eq 0 = \mathrm{curl} \left( (\sigma r) \mathscr{E}(h)  \nabla u^i \right) = \begin{pmatrix} \vspace{0.2cm} |h'| \, \partial_{z_2} \, \big( (\sigma r) \, \partial_{z_2} v^i \big) - \partial_{z_3} \big( (\sigma r) \, (h_3 \, \partial_{z_1} v^i - |h'|  \partial_{z_3} v^i) \big)\\ \vspace{0.2cm} - \, h_3 \, \partial_{z_3} \, \big( (\sigma r) \,  \partial_{z_2} v^i \big) - \, |h'| \, \partial_{z_1} \big( (\sigma r) \,  \partial_{z_2} v^i \big) \\ \partial_{z_1} \, \big( (\sigma r) \, (h_3 \partial_{z_1} v^i - |h'| \partial_{z_3} v^i) \big) + \, h_3 \, \partial_{z_2} \big( (\sigma r) \, \partial_{z_2} v^i \big)\end{pmatrix}.\label{DV'1}\qe As $v^i$, $\sigma$, $r$ are independent of $z_3$, \eqref{DV'1} reads as \eq \begin{pmatrix} \vspace{0.2cm} |h'| \, \partial_{z_2} \big( (\sigma r) \, \partial_{z_2} v^i \big)\\ \vspace{0.2cm} |h'| \,   \partial_{z_1} \, \big( (\sigma r) \, \partial_{z_2} v^i  \big) \\ h_3 \, \partial_{z_1} \big( (\sigma r) \, \partial_{z_1} v^i \big) + h_3 \, \partial_{z_2} \, \big( (\sigma r) \, \partial_{z_2} v^i \big)\end{pmatrix} = 0, \quad \text{for } i=1,2.\label{DV1}\qe

\smallskip

\noindent \textit{First case: $h' = 0$ and $h_3 \neq 0$.} We are led to the two-dimensional case of \cite{Brimagneto} with $h = h_3 \, e_3$. The key ingredient is the positivity of the determinant of the corrector $P_0$ due to Alessandrini and Nesi~\cite{AlNes}.

\smallskip

\noindent \textit{Second case: $h' \neq 0$ and $h_3 = 0$.} Without loss of generality, we can assume that $|h'| = 1$. The two first equalities of \eqref{DV1} give the existence of a constant $C$ such that \eq  (\sigma r)  \, \partial_{z_2} v^1 = C.\label{cstv}\qe Since $  \nabla u^1 = \partial_{z_1} v^1 f_1 + \partial_{z_2} v^1 f_2 + \partial_{z_3} v^1 e_3 $, we have  by \eqref{condmoyper}  \eq \langle \partial_{z_2} v^1 \rangle  = \langle \nabla u^1 \rangle \cdot f_2 = f_1 \cdot f_2 = 0\label{condcrois}.\qe By \eqref{condr}, since $0 < \alpha \leq \sigma \leq \beta$ and $r^{-1} \in L^1(Y)$, $(\sigma r)^{-1} \neq 0$ almost everywhere in $\R^2$. Combining \eqref{condcrois} with \eqref{cstv}, we get that \eq C \, \big \langle (\sigma r )^{-1} \big \rangle  =0, \label{derivdirec}\qe hence, $C=0$, which implies that $v^1$ is a function of $z_1$. On the other hand, the Alessandrini, Nesi~\cite{AlNes} result combined with $v^1 = v^1(z_1)$ yields \eq \det(P_0) = \partial_{z_1} v^1 \partial_{z_2} v^{2} - \partial_{z_1} v^{2} \underbrace{\partial_{z_2} v^{1}}_{= 0} =  \partial_{z_1} v^1 \partial_{z_2} v^{2} >0 \quad \text{a.e. in } Y. \label{positif}\qe Moreover, by \eqref{def-corrper} we have \eq 0 = \div \ \! \big( \sigma \nabla v^1\big) = \partial_{z_1} \big( \sigma \, \partial_{z_1} v^1 \big) \quad \text{in } \mathscr D'(\R^3),\qe which implies that $\sigma \, \partial_{z_1} v^1$ is a function of $z_2$. By \eqref{positif}, we may define the two measurable functions $f$, $g$ by \eq f(z_1):= \big(\partial_{z_1} v^1\big)^{-1} \quad \text{and} \quad g(z_2):=\sigma \, \partial_{z_1} v^1. \label{defphipsi}\qe Therefore, we get that $\sigma(y') = f(z_1) \, g(z_2)$. In particular, $f$, $g$ are $h_i$-periodic for $i=1,2$. Let us show that $f$, $g$, $f^{-1}$, $g^{-1}$ are bounded functions. Denote $\delta := \max( |h_1|,|h_2|) >0$. As $\alpha \leq \sigma \leq \beta$, we have by \eqref{defphipsi}, \eq \beta^2 \,f^{-2}(z_1) \geq \sigma^2 \, f^{-2}(z_1) = g^2(z_2) \geq \alpha^2 \, f^{-2}(z_1) >0 \quad \text{a.e. in } (z_1,z_2) \in (0,\delta)^2. \label{eqfog} \qe Integrating \eqref{eqfog} successively with respect to $z_1$ and $z_2$ on $(0,\delta)$, we get that \eq \; \left\{\!\! \begin{array}{l}
\vspace{0.2cm} \alpha^{-2} \, C_1 \geq f^{-2}(z_1) \geq \beta^{-2} \, C_1,\\
 \beta^2 \, \alpha^{-2} \, C_1 \geq g^2(z_2) \geq \alpha^{2} \, \beta^{-2} \, C_1,
\end{array}  \;\;\;\right.  \quad \text{with } C_1:= \fint_0^{\delta} g^2(z_2) \ \text{d}z_2 >0\qe that is $f,g, f^{-1}, g^{-1}$ are $L^\infty(\R)$ functions. Integrating the inequality  $\sigma(y') = f(z_1) \, g(z_2) \geq \alpha$ with respect to $z_2$ on $(0,\delta)$, we obtain that \eq \quad f(z_1) \, \int_0^\delta g(z_2)\ \text{d}z_2  \geq \delta \, \alpha  > 0 \quad \text{a.e. } z_1 \in \R, \qe that is $f$ has a constant sign. Moreover, like in \eqref{condcrois} we have \eq \langle f^{-1} \rangle = \langle \partial_{z_1} v^1 \rangle = \langle \nabla u^1 \rangle \cdot f_1= f_1 \cdot f_1 = 1\label{condcrois1}.\qe Hence $f$ is a positive function, so is $g$ by \eqref{defphipsi}. Then, by a uniqueness argument the expression of $\sigma$ implies that the potentials $v^i$, $i=1$, $2$, $3$, are given by \eq \; \left\{\!\! \begin{array}{l l}
\vspace{0.2cm} \partial_{z_1} v^1 = f^{-1}(z_1), & \partial_{z_2} v^1 = \partial_{z_3} v^1 = 0,\\
\vspace{0.2cm} \partial_{z_2} v^2 = \langle g^{-1} \rangle^{-1} \, g^{-1}(z_2), & \partial_{z_1} v^2 = \partial_{z_3} v^2 = 0, \\
v^3 = z_3.
\end{array}  \;\;\;\right.\label{defcorrcyl''}\qe The conditions \eqref{DV1} and \eqref{positif} give the existence of a constant $C$ such that \eq  r(y') =  \frac{C}{\sigma \ \partial_{z_2} v^2} = C \, \frac{ \langle g^{-1} \rangle}{f(z_1)}.\label{r}\qe Using the expressions \eqref{chgcoor} and $|h'| = 1$, we obtain \eqref{cylsigmafg}.

Conversely, if the conductivity and the Hall coefficient satisfy \eqref{cylsigmafg} with $\langle f^{-1} \rangle = 1$ (see Remark~\ref{uniqfg}), the potentials $v^i$, for $i=1$, $2$, $3$, are given by \eqref{defcorrcyl''}. Hence, it follows immediately \eqref{DV'1} and thus the case of equality. 

\smallskip

\noindent \textit{Third case: $h' \neq 0$ and $h_3 \neq 0$.} Considering the third equality of \eqref{DV1}, the first case shows that $r$ is constant in $Y$. Moreover, taking into account the first and second components of \eqref{DV1}, $\sigma, r$ takes the form \eqref{cylsigmafg} by the second case. Hence, $f$ is constant which gives \eqref{cylsigmag}. The converse is similar to the second case.

\smallskip

\noindent \textit{Case where $h_1 h_2 \neq 0$ and $h_1/h_2 \notin \mathbb Q$.} As $h_1,h_2 \neq 0$, we are in the second or third case of the proof. Let $i \in \{ 1,2\}$. We have proved that $u^i(y) = v^i(z)$ is a function of $z_i$. Moreover, $\varphi \, : \,  y \mapsto u^i(y)-f_i \cdot y = v^i(z) - z_i$ is a function in $H^1_\sharp(Y;\R)$. The function $\varphi$ has a continuous representative and is $h_j$-periodic for $j=1,2$. As $h_1/h_2 \notin \mathbb Q$, $\varphi$ is constant so is $\nabla v^i$. Therefore, by \eqref{defcorrcyl''}, $f,g$ are also constant. Finally by \eqref{cylsigmafg}, $\sigma$ and $r$ are constant.\qed 

\subsubsection{Four-phase checkerboard}

In the section, we consider a four-phase checkerboard columnar structure. Let $\alpha_1$, $\alpha_2$, $\alpha_3$, $\alpha_4$ be positive numbers. Consider the $Y$-periodic conductivity only depending on $y' = (y_1,y_2)$, defined on the unit square $(-1/2,1/2)^2$ by (see figure \ref{check1}) \begin{equation} \; \sigma(y') = \left\{\!\! \begin{array}{l l}
\vspace{0.2cm}\alpha_1 & \text{in } Q_1:=(0,1/2)^2, \\
\vspace{0.2cm}\alpha_2 & \text{in } Q_2:=(0,1/2) \times (-1/2,0), \\
\vspace{0.2cm}\alpha_3 & \text{in } Q_3:=(-1/2,0)^2, \\
\vspace{0.2cm}\alpha_4 & \text{in } Q_4:=(-1/2,0) \times (0,1/2).
\end{array} \right. \;\;\;\label{defcheck}\end{equation}

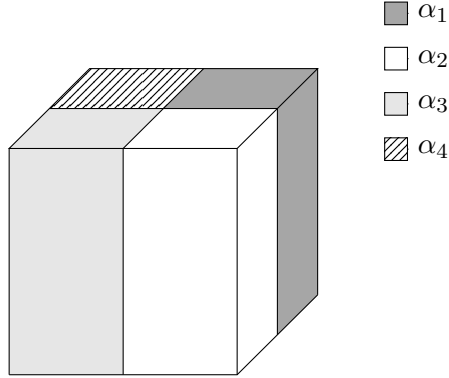
\begin{figure}[H]
\centering
\leavevmode
\begin{tikzpicture}[math3d, scale=3]
\coordinate (a) at (1,0,0);
\coordinate (b) at (1,1,0);
\coordinate (c) at (0,1,0);
\coordinate (d) at (0,0,0);
\coordinate (b1) at (0.5,1,0);
\coordinate (a1) at (1,0.5,0);
\coordinate (hh) at (0.5,0.5,1);
\coordinate (e) at (1,0,1);
\coordinate (f) at (1,1,1);
\coordinate (g) at (0,1,1);
\coordinate (h) at (0,0,1);
\coordinate (e1) at (0.5,1,1);
\coordinate (f1) at (1,0.5,1);
\coordinate (g1) at (0,0.5,1);
\coordinate (h1) at (0.5,0,1);
\fill[color= gray!70] (b1) -- (c) -- (g) -- (g1) -- (hh) -- (e1) -- cycle;
\fill[color= gray!0] (h) -- (h1) -- (hh) -- (g1) -- cycle;
\fill[color= gray!20] (h1) -- (hh) -- (f1) -- (a1) -- (a) -- (e) -- cycle;
\fill[pattern=north east lines] (h) -- (h1) -- (hh) -- (g1) -- cycle;
\draw (e1)--(h1) -- cycle;
\draw (f1)--(g1) -- cycle;
\draw (a) -- (b) -- (c);
\draw (b1) -- (e1);
\draw (e) -- (f) -- (g) -- (h) -- cycle;
\draw (a1) -- (f1);
\draw (a) -- (e);
\draw (b) -- (f);
\draw (c) -- (g);
\fill[color= gray!70] (2,2,2) -- (2,2.1,2) -- (2,2.1,1.9) -- (2,2,1.9)-- cycle;
\draw (2,2,2) -- (2,2.1,2) -- (2,2.1,1.9) -- (2,2,1.9)-- cycle;
\draw[-] (2,2.1,1.95) -- (2,2.1,1.95)
node[midway, right]{$\alpha_1$};
\fill[color= gray!00] (2,2,1.8) -- (2,2.1,1.8) -- (2,2.1,1.7) -- (2,2,1.7)-- cycle;
\draw (2,2,1.8) -- (2,2.1,1.8) -- (2,2.1,1.7) -- (2,2,1.7)-- cycle;
\draw[-] (2,2.1,1.75) -- (2,2.1,1.75)
node[midway, right]{$\alpha_2$};
\fill[color= gray!20] (2,2,1.6) -- (2,2.1,1.6) -- (2,2.1,1.5) -- (2,2,1.5)-- cycle;
\draw (2,2,1.6) -- (2,2.1,1.6) -- (2,2.1,1.5) -- (2,2,1.5)-- cycle;
\draw[-] (2,2.1,1.55) -- (2,2.1,1.55)
node[midway, right]{$\alpha_3$};
\fill[pattern=north east lines] (2,2,1.4) -- (2,2.1,1.4) -- (2,2.1,1.3) -- (2,2,1.3)-- cycle;
\draw (2,2,1.4) -- (2,2.1,1.4) -- (2,2.1,1.3) -- (2,2,1.3)-- cycle;
\draw[-] (2,2.1,1.35) -- (2,2.1,1.35)
node[midway, right]{$\alpha_4$};

\end{tikzpicture}
\caption{Period cell of the four-phase checkerboard columnar structure}
\label{check1}
\end{figure}

We now state the following result:

\begin{Prop} Consider the conductivity defined by \eqref{defcheck} and $\mathscr D(h,h)$ by  \eqref{diffDronde} and assume that \eqref{condr} is satisfied. Then, $\mathscr D(h,h)=0$ if and only if one of the following conditions holds:\begin{itemize}
      \item $h=0$;
      \item $h \neq 0$ is not parallel to $e_i$ for $i=1,2,3$, and $\sigma$, $r$ are constant.
      \item $h = h_3 e_3 \neq 0$, and the Hall coefficient $r$ is constant;
      \item $h = h_i e_i \neq 0$ for $i=1,2$,  and  there exists a constant $C$ such that \eq \alpha_1 \, \alpha_3 = \alpha_2 \, \alpha_4, \quad \text{and} \quad r = C\left(\frac{\alpha_{6 - 2i}}{\alpha_1} \mathds{1}_{\{y_i > 0\}} + \mathds{1}_{\{y_i < 0\} }\right). \qe
      \end{itemize}
\label{check}
\end{Prop}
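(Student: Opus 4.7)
The plan is to derive Proposition~\ref{check} as a direct corollary of Proposition~\ref{cyl} applied to the piecewise constant columnar conductivity~\eqref{defcheck}. Since $\sigma$ takes four constant values on the four quadrants of the period square, each of the four structural regimes of Proposition~\ref{cyl} reduces to purely algebraic conditions on the scalars $\alpha_1,\alpha_2,\alpha_3,\alpha_4$. The case $h=0$ is trivial, and when $h=h_3\,e_3\neq 0$ the second bullet of Proposition~\ref{cyl} directly yields $r$ constant.

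For the pure-axis case $h=h_i\,e_i$ with $i\in\{1,2\}$ I would apply the third bullet of Proposition~\ref{cyl}: the factorization $\sigma(y')=f(h_i y_i)\,g(h_i y_{3-i})$ forces $\sigma$ to split as $F(y_i)\,G(y_{3-i})$ with $F$ and $G$ piecewise constant of two values $F_\pm$, $G_\pm$. Identifying the four products $F_\pm G_\pm$ with $\alpha_1,\alpha_2,\alpha_3,\alpha_4$ produces the necessary and sufficient compatibility $F_+F_-G_+G_-=\alpha_1\alpha_3=\alpha_2\alpha_4$; the system can then be solved modulo a single scalar, and the formula $r=C/f(h_i y_i)=C/F(y_i)$ yields exactly the two-valued Hall coefficient $r=C\bigl(\alpha_{6-2i}/\alpha_1\,\mathds{1}_{\{y_i>0\}}+\mathds{1}_{\{y_i<0\}}\bigr)$ after suitable rescaling of $C$.

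For $h$ not parallel to any $e_i$, Proposition~\ref{cyl} requires $\sigma$ to depend only on the linear forms $h'\cdot y'$ and $Jh'\cdot y'$ (third bullet, when $h_3=0$), or only on $Jh'\cdot y'$ with $r$ constant (fourth bullet, when $h_3\neq 0$). The key geometric observation is that the discontinuities of such a $\sigma$ then lie on lines parallel to $h'$ or to $Jh'$, while the checkerboard jumps on $y_1=0$ and $y_2=0$. When $h_1\,h_2\neq 0$, neither $h'$ nor $Jh'$ is axis-aligned, so $f$ and $g$ carry no jumps, hence are constant; consequently $\sigma$ is constant and, via $r=C/f$, so is $r$. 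This covers the generic sub-cases $h_3=0$ with $h_1h_2\neq 0$, and $h_1h_2h_3\neq 0$.

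The subtle point I expect to be the main obstacle is the borderline situation in which $h$ has exactly two non-zero components, one being $h_3$, for instance $h=h_1\,e_1+h_3\,e_3$. Proposition~\ref{cyl} then only yields $\sigma=g(h_1 y_2)$, which a priori is compatible with the degenerate pairing $\alpha_1=\alpha_4$, $\alpha_2=\alpha_3$ rather than full constancy. To reconcile this with the statement I would argue that such a pairing collapses the four-phase checkerboard into a genuine two-phase lamination along $e_2$, at which point Proposition~\ref{onedirection} applied in the layering direction $\xi=e_2$ takes over; alternatively one rerunes the corrector analysis by permuting the roles of $y_1$ and $y_2$ and combines the two structural identities to force all four $\alpha_j$'s to coincide. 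This is the step where the specific rigidity of the checkerboard (as opposed to a general columnar structure) has to be exploited beyond what Proposition~\ref{cyl} alone provides.
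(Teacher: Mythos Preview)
Your overall strategy—reducing each regime to Proposition~\ref{cyl} and extracting algebraic constraints on the $\alpha_i$—is exactly what the paper does, and for $h=0$, $h=h_3e_3$, and $h=h_ie_i$ ($i=1,2$) your arguments coincide with the paper's essentially verbatim.

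For the case $h_1h_2\neq 0$ the paper's argument is more concrete than your discontinuity-direction heuristic. From $\sigma=f(z_1)g(z_2)$ constant on each open quadrant, the paper fixes a thin rectangle $Q_{1,\delta}\subset Q_1$ in $(z_1,z_2)$ coordinates to deduce that $f$ is constant on $I=(0,(h_1+h_2)/2)$ and $g$ on $J=(-h_1/2,h_2/2)$; the parallelogram in $y'$ corresponding to $I\times J$ then overlaps $Q_1$, $Q_2$, $Q_4$, forcing $\alpha_1=\alpha_2=\alpha_4$, and a symmetric argument based at $Q_3$ finishes. Your ``jump lines must be parallel to $h'$ or $Jh'$'' captures the same idea but would need a sentence explaining why $f,g\in L^\infty$ inherit piecewise constancy from $\sigma$ before the directional conclusion is rigorous.

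On the borderline case $h=h_1e_1+h_3e_3$ you are right that this is ``not parallel to any $e_i$'' yet Proposition~\ref{cyl} gives only $\sigma=g(h_1y_2)$ and $r$ constant, i.e.\ $\alpha_1=\alpha_4$, $\alpha_2=\alpha_3$, which does not force $\sigma$ constant. The paper does not treat this case separately: its proof simply opens with ``without loss of generality $h_i>0$ for $i=1,2$'', which silently excludes precisely this configuration. Your proposed repairs do not close the gap either: Proposition~\ref{onedirection} with $\xi=e_2$ and $h\perp\xi$ yields only ``$r$ constant'', not ``$\sigma$ constant''; and permuting $y_1\leftrightarrow y_2$ changes $h$ itself, so it does not produce a second constraint on the \emph{same} data. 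In short, you have correctly identified a case that the paper's proof (and, as stated, the proposition) does not cover.
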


\begin{Rem} The case equality $\alpha_1 \, \alpha_3 = \alpha_2 \, \alpha_4$ corresponds to the case where the conductivity of the four-phase checkerboard is a tensor product of functions (see \cite{MarSpa}).

When $\alpha_1 \, \alpha_3 \neq \alpha_2 \, \alpha_4$, Craster and Obnosov \cite{Cras,CrasOb} proved an intricate formula for the corrector~$P^0$. In this case, $\sigma$ is not a tensor product of functions which is consistent with Proposition~\ref{cyl}.
\end{Rem}

\noindent \textit{Proof of Proposition \ref{check}.} The case $(h_1,h_2) = (0,0)$ and $h_3 \neq 0$ is a direct consequence of Proposition \ref{cyl}. Set, like in Proposition \ref{cyl}, $h' = (h_1,h_2)$.
\bigskip

\noindent \textit{First case: $h$ is not parallel to $e_i$ for $i=1,2,3$.} Assume that, without loss of generality, $|h'| = 1$, $h_i >0$ for $i=1,2$. We apply Proposition \ref{cyl}. There exist two positive functions $f$ and $g$ in $L^\infty(\R)$ which are $h_i$-periodic for $i=1,2$, and a constant $C$ such that \eqref{cylsigmafg} holds. Since $\sigma(y') = f(z_1) \, g(z_2)$ (with the new variables \eqref{chgcoor}) is a piecewise constant function, $f(z_1)$ and $g(z_2)$ are constant in each open square $Q_i$ for $i=1,2,3,4$. Considering the particular case of $Q_1$ and, for $\delta$ small enough, the rectangle \eq Q_{1,\delta} := \left\{ y' \in Q_1 \, : \, z_1 \in \left(\displaystyle \frac{h_1}{h_2} \delta, \displaystyle \frac{h_1+h_2}{2}\right), \, z_2 \in (-\delta, \delta) \right\} \subset Q_1,\qe we get successively that 
\eq \left\{ \begin{array}{l}
 \vspace{0.3cm} \text{$f$ is constant on } I := \displaystyle \left(0 , \frac{h_1 + h_2}{2}\right), \\
 \text{$g$ is constant on } J := \displaystyle \left(-\frac{h_1}{2} , \frac{h_2}{2}\right). \end{array} \right. \label{fgcsts}\qe Hence, $\sigma$ is constant in the rectangle (see figure \ref{interseccond}) \eq Q:= \left\{ y' \in \left(-\frac{1}{2}, \frac{1}{2} \right) \, : \, (z_1,z_2) \in I \times J \right\}. \qe  \begin{figure}[H]
\centering
\leavevmode
\begin{tikzpicture}[scale=5]
\draw[->] (0.5,-0.2) -- (0.5,1.2);
\draw[->] (-0.2,0.5) -- (1.2,0.5);
\draw[->] (0.5,0.5) -- (0.75,0.93)
node[midway,right]{$h'$};
\draw[<-,dash pattern = on 1mm off 1mm] (0.85,1.1) -- (0.15,-0.1);
\draw[<-,dash pattern = on 1mm off 1mm] (-0.102,0.85) -- (1.102,0.15);
\coordinate (a) at (0,0);
\coordinate (b) at (1,0);
\coordinate (c) at (1,1);
\coordinate (d) at (0,1);
\coordinate (e) at (0.199, 0.675);
\coordinate (f) at (0.715, 0.375);
\coordinate (g) at (0.7850, 0.4954);
\coordinate (h) at (0.2690, 0.7954);
\coordinate (a1) at (0.5,0);
\coordinate (b1) at (1,0.5);
\coordinate (c1) at (0.5,1);
\coordinate (d1) at (0,0.5);
\fill[color= gray!20,opacity=0.5] (e) -- (f) -- (g)  -- (h) -- cycle;
\draw (e) -- (f) -- (g)  -- (h) -- cycle;
\draw (0.90,0.90) node{$Q_1$};
\draw (0.9,0.1) node{$Q_2$};
\draw (0.1,0.1) node{$Q_3$};
\draw (0.1,0.90) node{$Q_4$};
\draw (0.6,1.1) node{$y_2$};
\draw (1.1,0.6) node{$y_1$};
\draw (0.9,1.1) node{$z_1$};
\draw (-0.15,0.8) node{$z_2$};
\draw (1.35,1.025) node{$Q$};
\draw (1.38,0.925) node{$Q_{1,\delta}$};
\draw (-0.4,1.025) node{$\textcolor{white}{Q}$};
\draw (a) -- (b) -- (c) -- (d) -- cycle;
\draw (0.5035, 0.5555) -- (0.5465, 0.5305) -- (0.5915, 0.6079) -- (0.5485, 0.6329) -- cycle;
\fill [color= black] (0.5035, 0.5555) -- (0.5465, 0.5305) -- (0.5915, 0.6079) -- (0.5485, 0.6329) -- cycle;
\fill [color= black] (1.2,0.9) -- (1.25,0.9) -- (1.25,0.95) -- (1.2,0.95) -- cycle;
\draw (1.2,0.9) -- (1.25,0.9) -- (1.25,0.95) -- (1.2,0.95) -- cycle;
\fill [color= gray!20,opacity=0.5] (1.2,1) -- (1.25,1) -- (1.25,1.05) -- (1.2,1.05) -- cycle;
\draw (1.2,1) -- (1.25,1) -- (1.25,1.05) -- (1.2,1.05) -- cycle;
\end{tikzpicture}
\caption{Cross-section of the period cell of the checkerboard in the $(y_1,y_2)$-plane}
\label{interseccond}
\end{figure}
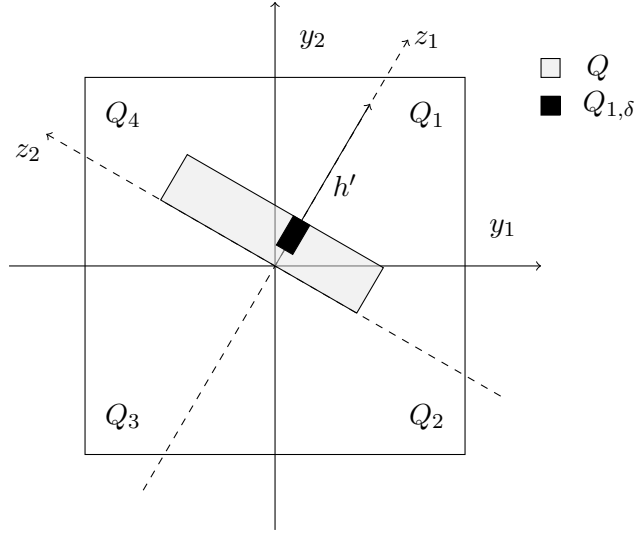 As $Q$ intersects $Q_1$, $Q_2$, $Q_4$, $\sigma$ is constant in $Q_1 \cup Q_2 \cup Q_4$, hence $\alpha_1 = \alpha_2 = \alpha_4$. Repeating the same argument with $Q_3$ in \eqref{fgcsts}, we obtain that $\alpha_1 = \alpha_2 = \alpha_3 =\alpha_4$. Therefore, $\sigma$, $f$, $g$ are constant, so is $r$ by \eqref{cylsigmafg}.

\bigskip

\noindent \textit{Second case: $h = h_1 e_1 \neq 0$.}  Without loss of generality, assume that $h_1 = 1$. We again apply Proposition \ref{cyl}. There exist two positive functions $f$ and $g$ in $L^\infty(\R)$ which are $h_i$-periodic for $i=1,2$, and a constant $C'$ such that \eq  \sigma(y') = f( y_1) \, g(y_2), \quad \text{and} \quad r(y') = \frac{C'}{f(y_1)} \quad \text{a.e. } y' \in (-1/2,1/2)^2.\label{eq12}\qe Since $\sigma$ is piecewise constant in the four-phase checkerboard and $f$, $g$ are respectively functions of the independent variables $y_1$, $y_2$, $f$, $g$ are constant in each open square $Q_i$, for $i=1,2,3,4$. It follows immediately that there exist two positive constants $C'_1$ and $C'_2$ such that $C'_1 \, C'_2 = \alpha_1^{-1}$ and \begin{equation} \displaystyle \left\{\!\! \begin{array}{r l}
\vspace{0.3cm} f(y_1) & = \displaystyle C'_1\left( \alpha_1 \mathds{1}_{\{y_1 >0\}} + \alpha_4 \mathds{1}_{\{y_1 \leq 0\}} \right), \\
g(y_2) & = \displaystyle C'_2 \left( \alpha_1 \mathds{1}_{\{y_2 >0\}} + \alpha_2 \mathds{1}_{\{y_2 \leq 0\}} \right),
\end{array} \quad \text{a.e. } y' \in (-1/2,1/2)^2.\right. \;\;\;\label{eqfg}\end{equation} Finally, \eqref{eqfg} and \eqref{eq12} imply  that there exists a constant $C$ such that \eq \alpha_1 \, \alpha_3 = \alpha_2 \, \alpha_4, \quad \text{and} \quad r(y') = C\left(\frac{\alpha_4}{\alpha_1} \mathds{1}_{\{y_1 > 0\}} + \mathds{1}_{\{y_1 < 0\} }\right).\label{concr}\qe Conversely, if \eqref{concr} is satisfied, we can define $f$ and $g$ as in \eqref{eqfg} with $C'_1 C'_2 = \alpha_1^{-1}$.

The case $h = h_2 e_2 \neq 0$ is quite similar. \qed

\bibliographystyle{plain}
\bibliography{biblio}

\end{document}